\newcommand{\ind}{\mathrm{Ind}}
\newcommand{\ennd}{\mathrm{End}}
\newcommand{\germ}{\mathfrak}
\newcommand{\spn}{\mathrm{Span}}
\newcommand{\ad}{\mathrm{ad}}
\newcommand{\cccs}{C^{\pi_i}(\mathbb R,\mathcal K_i)}
\newcommand{\cccsi}{C^{\pi}(\mathbb R,\mathcal K)}
\newcommand{\cccc}{C_c^{\pi_i}(\mathbb R,\mathcal K_i)}
\newcommand{\ccca}{C_c^{\pi}(\mathbb R,\mathcal K)}
\newcommand{\D}{\EuScript{D}}
\numberwithin{equation}{section}
\newtheorem{theorem}{\bf Theorem}[section]
\newtheorem{corollary}[theorem]{\bf Corollary}
\newtheorem{lemma}[theorem]{\bf Lemma}
\newtheorem{definition}[theorem]{\bf Definition}
\newtheorem{proposition}[theorem]{\bf Proposition}
\title{Unitary representations of nilpotent super Lie groups}
\author{Hadi Salmasian}
\date{Revised : 31 October, 2009}
\begin{document}
\maketitle
\begin{abstract}
We show that
irreducible unitary representations 
of nilpotent super Lie groups 
can be obtained by induction from
a distinguished class of sub super Lie groups. 
These sub super Lie groups 
are natural analogues of polarizing subgroups that appear in
classical Kirillov theory. 
We obtain a concrete geometric parametrization of 
irreducible unitary
representations by nonnegative definite coadjoint orbits.
As an application, we prove an analytic  
generalization of the 
Stone-von Neumann theorem for Heisenberg-Clifford super Lie groups. 
\end{abstract}
\section{Introduction}
\label{introd}
\subsection{Background}
One of the most elegant results in the theory of unitary representations
is the Stone-von Neumann theorem, which yields a classification of 
irreducible unitary representations of the Heisenberg group. It is 
the starting point in the study of unitary representations of 
nilpotent Lie groups, in which it
plays an essential role as well.

Kirillov's seminal work on unitary representations of nilpotent Lie
groups showed that unitary representations can be obtained in a simple fashion,
namely as induced representations from one-dimensional representations
of certain subgroups which are called \emph{polarizing subgroups}. 
From this, Kirillov deduced 
a well-behaved correspondence between irreducible unitary 
representations and coadjoint orbits.

Physicists are interested in unitary representations of Lie superalgebras and
super Lie groups
\footnote{
We follow \cite{delignemorgan} and \cite{vara} 
in using the terms \emph{super Lie group} and \emph{sub super Lie group}. 
Neverthelerss, instead of Deligne and Morgan's 
\emph{super Lie algebra} 
we use the term \emph{Lie superalgebra} merely because 
the latter is used in the literature more frequently.}
and their applications, e.g. in the classification of 
free relativistic super particles in SUSY quantum mechanics 
(see \cite{SS74} and \cite{FSZ81}). Extensions of the Stone-von Neumann theorem
to the Heisenberg-Clifford super Lie group, and the oscillator representation to the
orthosymplectic case, have been studied widely by physicists as well as
mathematicians (see \cite{nishiyama}, \cite{lo}, and \cite{Gunaydin}). 

Nevertheless,
much of the work done on infinite-dimensional unitary representations of 
super Lie groups treats representations algebraically, without addressing 
the analytic aspects. When the even part of the 
Lie superalgebra is a 
reductive Lie algebra
(e.g., for classical simple Lie superalgebras)
the space of the representation can be identified
with the space of 
$K$-finite analytic vectors of a unitary representation of 
the even part on a Hilbert 
space. This approach has been pursued in \cite{furutsunishiyama}.
However, this method is not applicable to more general super Lie groups,
e.g., the nilpotent ones.

Motivated by establishing a rigorous formalism for Mackey-Wigner's little group 
method in the super setting, the authors of \cite{vara} establish analytic
foundations of the theory of unitary representations of super Lie groups.
The key observation is that for infinite-dimensional representations, 
the action of the odd part of the Lie superalgebra 
is by \emph{unbounded} operators, and thus one
should consider densely defined operators.
As shown in \cite{vara}, it turns out that 
the correct space to realize the action
of the odd part is the (dense) subspace of \emph{smooth} vectors 
(in the sense of \cite[p. 52]{knapp}) for the even part.

\subsection{Our main results} The main goal of this work is 
to show that irreducible unitary representations 
(in the sense of \cite{vara}) of nilpotent
super Lie groups can be described in a way which is very similar
to the classical work of Kirillov. 
More specifically, our results are as follows. 
\begin{itemize}
\item[(a)] We generalize the notion of 
polarizing subalgebras of nilpotent Lie algebras 
to what we call \emph{polarizing systems} in 
nilpotent super Lie groups. 
Let $(N_0,\germ n)$ be a nilpotent super Lie group.
A polarizing system of $(N_0,\germ n)$ is a 6-tuple 
$$
(M_0,\germ m,\Phi,C_0,\germ c,\lambda)
$$
where $(M_0,\germ m)$ is a sub super Lie group of $(N_0,\germ n)$, 
$\Phi:(M_0,\germ m)\to(C_0,\germ c)$ is a homomorphism onto a 
super Lie group of Clifford type, and $\lambda\in\germ n_0^*$.
(There are extra conditions which are stated
in Definition \ref{polarizing}.)
We show that every irreducible unitary representation of a nilpotent super Lie group is induced 
from a special Clifford module associated
to a polarizing system (see part (a) of Theorem \ref{thmindpol}). 
The latter module is said to be 
\emph{consistent} with the
polarizing system.
Conversely, we prove that induction from a consistent representation of 
a polarizing system  
always results in an irreducible unitary 
representation (see Theorem \ref{lastmainthm}).
In other words, we show that induction yields the following
surjective map:
$$
\left\{
\begin{array}{c}
\textrm{Ordered pairs of polarizing}\\
\textrm{systems of }(N_0,\germ n)\textrm{ and their}\\
\textrm{consistent representations}
\end{array}
\right\}
\ \longrightarrow\ 
\left\{
\begin{array}{c}
\textrm{Irreducible unitary}\\
\textrm{representations of }(N_0,\germ n)\\
\textrm{up to unitary equivalence}
\end{array}
\right\}
$$
\item[(b)] 
Given a $\lambda\in\germ n_0^*,$
we obtain a simple necessary and sufficient condition for 
the existence of a polarizing system 
$(M_0,\germ m,\Phi,C_0,\germ c,\lambda)$ with 
a consistent representation. For every $\lambda\in\germ n_0^*$,
consider the symmetric bilinear form 
$$
\mathrm B_\lambda:\germ n_1\times\germ n_1\to\mathbb R
$$
defined by $\mathrm B_\lambda(X,Y)=\lambda([X,Y])$.
In Section \ref{polarisingsystems} we prove that such a 
polarizing system with a consistent representation exists 
if and only if 
$\mathrm B_\lambda$ is nonnegative definite.
\item[(c)]
We obtain a concrete geometric parametrization of 
irreducible unitary representations of $(N_0,\germ n)$.
Let 
$$
\germ n_0^{+}=\{\,\lambda\in\germ n_0^*\ |\ \mathrm B_\lambda
\textrm{ is nonnegative definite\,}\}.
$$
It is easily checked that 
$\germ n_0^{+}$ is a union of coadjoint orbits. 
In Theorem \ref{mthm} we prove that
the inducing construction outlined above yields 
the following
bijective correspondence:
$$
\Big\{N_0\textrm{-orbits in }\germ n_0^+
\Big\}
\longleftrightarrow
\left\{
\begin{array}{c}
\textrm{Irreducible unitary representations of}\\
(N_0,\germ n)\textrm{ up to unitary equivalence}\\
\textrm{and parity change}
\end{array}
\right\}
$$
\item[(d)] As a simple application of our results, 
we obtain a proof of an analytic formulation
of the Stone-von Neumann theorem for Heisenberg-Clifford super
Lie groups (see Section \ref{svngeneral}). 
We believe that
this analytic formulation is new.
We would like to mention that in \cite{rosenberg}, the author studies
a generalization of the Stone-von Neumann theorem to the super case.
Our approach has the advantage that it yields a 
concrete statement based on
a rigorous and more general notion of unitary representation for
super Lie groups, and avoids the 
assumption that the odd part have even dimension.
\item[(e)] A consequence of part (b) of Theorem \ref{thmindpol}
is a numerical invariant
of irreducible unitary representations of nilpotent super Lie groups.
The value of the invariant is a positive integer, and is equal to one 
if and only if
the representation is \emph{purely even}, i.e., in its 
$\mathbb Z_2$-grading the odd summand is trivial. 
\end{itemize}

In conclusion, this work is yet another 
justification for 
fruitfulness of the approach pursued in \cite{vara} to define and study unitary 
representations of super Lie groups rigorously.

\subsection{Organization of the paper}
This paper is organized as follows. Section \ref{notationandprem}
is devoted to recalling some basic definitions and facts about
super Lie groups and their unitary representations.
In Section \ref{specialindsec} we recall the notion  
of 
induction of unitary representations from \emph{special} sub super 
Lie groups which was introduced in \cite{vara}, and prove
that it can be done in stages (see Proposition \ref{indtransitivity}).
Section \ref{nilpsupersec} is devoted to studying the structure
of nilpotent super Lie groups, proving a version of Kirillov's
Lemma, and classification of representations
of super Lie groups of Clifford type. Section \ref{inductionresultfive}
contains a technical but important result. In this section we 
prove that under certain 
conditions a unitary representation
is induced from a sub super Lie group of codimension one. 
Although this result is analogous to one of Kirillov's original results,
there are several delicate issues involving unbounded operators which
need to be dealt with.
Using
the main result of Section \ref{inductionresultfive}, in Section
\ref{svngeneral} 
we obtain a proof of an analytic formulation of the 
Stone-von Neumann theorem for Heisenberg-Clifford super Lie groups.
In Section \ref{mainressecsix} we define polarizing systems,
prove the surjectivity of the map from induced representations 
to irreducible representations,
and show that if two polarizing systems yield the same representation 
then they correspond to the same coadjoint orbit. In Section \ref{suitable}
we prove the existence of a special kind of polarizing Lie subalgebra 
of $\germ n_0$. This section is fairly technical and contains 
several lemmas, but the main point is to prove Lemma 
\ref{existenceofevenpart}. In Section
\ref{polarisingsystems} we state
and prove our main result on parametrization of representations 
by coadjoint orbits (see Theorem \ref{mthm}).

\subsection{Acknowledgement}
After the first draft of this article was written,
we realized that M. Duflo had previously 
worked on the same problem and obtained similar 
results which were not published. 
We would like to thank M. Duflo for 
extremely illuminating conversations, 
and his encouragement 
to write this article.

\section{Preliminaries}
\label{notationandprem}
\subsection{Notation and basic definitions}
Recall that a densely defined operator $T$
on a Hilbert space $\mathcal H$ is called symmetric if 
for every $v,w\in D(T)$ we have 
$\langle Tv,w\rangle=\langle v,Tw\rangle$,
where $D(T)$ denotes the domain of $T$.

By a $\mathbb Z_2$-graded Hilbert space we mean a Hilbert space
$\mathcal H$ with an orthogonal decomposition 
$$
\mathcal H=\mathcal H_0\oplus\mathcal H_1.
$$
A densely defined linear
operator $T$ on $\mathcal H$ is called even 
(respectively, odd) 
if its domain $D(T)$ is $\mathbb Z_2$-graded, i.e., 
$$
D(T)=D(T)_0\oplus D(T)_1
$$ 
where for every $i\in\{0,1\}$ we have 
$D(T)_i=D(T)\cap \mathcal H_i$,
and for every $v\in D(T)_i$ we have
$Tv\in \mathcal H_i$ 
(respectively, $Tv\in\mathcal H_{1-i}$).

For basic definitions and facts about Lie superalgebras, we refer the 
reader to \cite{delignemorgan} and \cite{kac}. Unless explicitly 
stated otherwise, in this paper all 
Lie algebras and Lie superalgebras are over $\mathbb R$.

If $\germ g$ is a Lie superalgebra, its centre and universal enveloping algebra
are denoted by $\mathcal Z(\germ g)$ and $\mathcal U(\germ g)$.
Similarly, the centre of a Lie group $G$ is denoted by $\mathcal Z(G)$.
If a Lie group $G$ acts on a vector space $\EuScript{V}$, then the  
action of 
an element $g\in G$ on a vector $v\in\EuScript{V}$ is 
denoted by $g\cdot v$.

Following \cite{delignemorgan}, our definition of 
a super Lie group is based on
the notion of a \emph{Harish-Chandra pair}.
One can define a super Lie group concretely as follows.
\begin{definition}
\label{defnofliesupgp} 
A super Lie group is a pair $(G_0,\germ g)$ with the 
following properties.
\begin{itemize}
\item[(a)] $\germ g=\germ g_0\oplus \germ g_1$ is a 
Lie superalgebra over $\mathbb R$.
\item[(b)] $G_0$ is a connected 
real Lie group with Lie algebra $\germ g_0$ which
acts on $\germ g$ smoothly via $\mathbb R$-linear automorphisms. 
\item[(c)] The action of $G_0$ on $\germ g_0$ is the adjoint action.
The adjoint action of $\germ g_0$ on $\germ g$ is the 
differential of the action of $G_0$ on $\germ g$. 
\end{itemize}
\end{definition}

A super Lie group $(H_0,\germ h)$ is called a \emph{sub super Lie group} of 
a super Lie group $(G_0,\germ g)$ if $H_0$ is a Lie subgroup of 
$G_0$ and $\germ h=\germ h_0\oplus\germ h_1$ is a  
subalgebra
\footnote{
In this paper, instead of the term \emph{sub super Lie algebra}
of \cite{delignemorgan} we use the abbreviation \emph{subalgebra}.
}
of $\germ g$ such that $\germ h_0$
is the Lie subagebra of $\germ g_0$ corresponding to $H_0$  
and 
the action of $H_0$ on $\germ h$ is the
restriction of the action of $G_0$ on $\germ g$.

Let $(G_0,\germ g)$ and $(G_0',\germ g')$ be arbitrary super Lie groups.
A homomorphism 
$$
\Phi:(G_0,\germ g)\to(G_0',\germ g')
$$ 
consists of a Lie group 
homomorphism
from $G_0$ to $G_0'$ and a homomorphism of Lie superalgebras from $\germ g$ to $\germ g'$
which are compatible with each other.
We say $\Phi$ is surjective if both of these homomorphisms 
are surjective in the usual sense.

If $(\pi,\mathcal H)$ is a unitary representation of a Lie group $G$ on a Hilbert 
space $\mathcal H$, then the subspace of smooth vectors of $\mathcal H$ 
for the action of
$G$ is denoted by $\mathcal H^\infty$. The infinitesimal action of the Lie algebra of 
$G$ on $\mathcal H^\infty$ is denoted by $\pi^\infty$.

The definition of unitary representations of super Lie groups,
which is given below, is originally introduced in \cite{vara}. 
\begin{definition}
\label{unirep}
A unitary representation of $(G_0,\germ g)$ is a triple
$(\pi,\rho^\pi,\mathcal H)$ such that   
$\mathcal H$ 
is a $\mathbb Z_2$-graded Hilbert space 
endowed with 
a unitary representation $\pi$ of $G_0$, and  
$\rho^\pi:\germ g_1\to \ennd_\mathbb C(\mathcal H^\infty)$ 
is an $\mathbb R$-linear map
with the following properties.
\begin{itemize}
\item[(a)] For every $g\in G_0$, $\pi(g)$ is an even operator on $\mathcal H$.  
\item[(b)]
For every $X\in\germ g_1$, $\rho^\pi(X)$ is an odd linear operator. Moreover,
$\rho^\pi(X)$ is symmetric, i.e., for every $v,w\in\mathcal H^\infty$, we have
$$
\langle\rho^\pi(X)v,w\rangle=\langle v,\rho^\pi(X)w\rangle.
$$
\item[(c)] For every $X,Y\in\germ g_1$ and $v\in\mathcal H^\infty$, we have
$$
\rho^\pi(X)\rho^\pi(Y)v+\rho^\pi(Y)\rho^\pi(X)v=-\sqrt{-1}\pi^\infty([X,Y])v.
$$
\item[(d)] For every $g\in G_0$ and $X\in\germ g_1$, we have
$$\rho^\pi(g\cdot X)=\pi(g)\rho^\pi(X)\pi(g^{-1}).$$

\end{itemize}
 
\end{definition}
{\noindent\bf Remark.} 1. One can combine $\rho^\pi$ and $\pi^\infty$
to obtain
a representation of $\germ g$ in $\mathcal H^\infty$ where
an element $X_0+X_1\in\germ g_0\oplus\germ g_1$ acts by
$\pi^\infty(X_0)+e^{{\pi\over 4}\sqrt{-1}}\rho^\pi(X_1)$.
Consequently, from \cite[Proposition 1]{vara}
it follows that for every $X\in\germ g_0$, $Y\in\germ g_1$, and $v\in\mathcal H^\infty$
we have
$$
\rho^\pi([X,Y])v=\pi^\infty(X)\rho^\pi(Y)v-\rho^\pi(Y)\pi^\infty(X)v.
$$
2. From the closed graph theorem for  
Fr\'{e}chet spaces, it follows that for every 
$X\in\germ g_1$, $\rho^\pi(X)$ is a continuous operator on
$\mathcal H^\infty$.\\

Given two unitary representations $(\pi,\rho^\pi,\mathcal H)$ and 
$(\pi',\rho^{\pi'},\mathcal H')$ of $(G_0,\germ g)$, by
an intertwining operator from $(\pi,\rho^\pi,\mathcal H)$
to $(\pi',\rho^{\pi'},\mathcal H')$ we mean an even 
bounded linear transformation $T:\mathcal H\to\mathcal H'$ 
such that for every $g\in G_0$ and $X\in\germ g_1$
we have
$T\pi(g)=\pi'(g)T$ and $T\rho^\pi(X)=\rho^{\pi'}(X) T$.
(Note that if $T\pi(g)=\pi'(g)T$ for every $g\in G_0$, then 
$T\mathcal H^\infty\subseteq\mathcal H'^\infty$.)

Two unitary representations $(\pi,\rho^\pi,\mathcal H)$ and 
$(\pi',\rho^{\pi'},\mathcal H')$ of $(G_0,\germ g)$ are said to be unitarily 
equivalent if there exists an isometry 
$T:\mathcal H\to \mathcal H'$
which is also an intertwining operator.
Note that it follows that $T\mathcal H^\infty= \mathcal H'^\infty$.

From now on, to indicate that two unitary representations 
$(\pi,\rho^\pi,\mathcal H)$ and 
$(\pi',\rho^{\pi'},\mathcal H')$ are unitarily equivalent, we write 
$$
(\pi,\rho^\pi,\mathcal H)\simeq (\pi',\rho^{\pi'},\mathcal H').
$$

A unitary representation $(\pi,\rho^\pi,\mathcal H)$ of $(G_0,\germ g)$
is called irreducible if $\mathcal H$ does not have any proper 
$(G_0,\germ g)$-invariant
closed $\mathbb Z_2$-graded subspaces.
By \cite[Lemma 5]{vara}, a representation $(\pi,\rho^\pi,\mathcal H)$
is irreducible if and only if every intertwining operator
from 
$(\pi,\rho^\pi,\mathcal H)$ to itself is scalar.

From every 
unitary representation $(\pi,\rho^\pi,\mathcal H)$ we can obtain a new
unitary representation $(\pi,\rho^\pi,\mathrm{\Pi}\mathcal H)$
where $\mathrm{\Pi}$ is the parity change operator. 
The operator $\mathrm{\Pi}$
can be considered as
a special case of tensor product, namely tensoring 
$(\pi,\rho^\pi,\mathcal H)$
with a trivial ($0|1$)-dimensional representation.
The unitary 
representations $(\pi,\rho^\pi,\mathcal H)$
and $(\pi,\rho^\pi,\mathrm{\Pi}\mathcal H)$ are said to be 
the same up to parity change.
Note that they are not necessarily 
unitarily equivalent. 

From now on, to indicate that two unitary representations $(\pi,\rho^\pi,\mathcal H)$ and 
$(\pi',\rho^{\pi'},\mathcal H')$ are identical up to unitary 
equivalence and parity change,  
we write 
$$
(\pi,\rho^\pi,\mathcal H)\eqsim(\pi',\rho^{\pi'},\mathcal H').
$$

\subsection{Stability of unitary representations}

A remarkable feature of unitary representations 
as defined in Definition \ref{unirep} is their stability, i.e. that
one can replace the space $\mathcal H^\infty$ with a variety of
dense and invariant 
subspaces. Stability is needed even for justifying 
that the restriction of a unitary representation of a 
super Lie group $(G_0,\germ g)$ to a sub super 
Lie group $(H_0,\germ h)$ is well defined, i.e., 
that the restriction determines a unique unitary 
representation up to unitary equivalence.
The result that justifies the latter statement 
is \cite[Proposition 2]{vara}. 
For the reader's convenience, and for 
future reference in this article, we would
like to record a slightly simplified formulation 
of the statement of this proposition.
\begin{proposition}
\label{varadarajan}
{\rm\cite[Proposition 2]{vara}}
Let $(G_0,\germ g)$ be a super Lie group and
$(\pi,\mathcal H)$ be a unitary representation of $G_0$.
Suppose $\mathcal B$ is a dense, $\mathbb Z_2$-graded, and 
$G_0$-invariant subspace of $\mathcal H$, and
$\{\rho(X)\}_{X\in\germ g_1}$ is a family
of densely defined linear operators on $\mathcal H$ with the
following properties.
\begin{itemize}
\item[(a)] $\mathcal B\subseteq \mathcal H^\infty$. 
\item[(b)] If $X\in\germ g_1$, then
$\mathcal B\subseteq D(\rho(X))$. 
\item[(c)] $\rho(X)$ is symmetric for every $X\in\germ g_1$.
\item[(d)] For every $X\in\germ g_1$ and $i\in\{0,1\}$ we have 
$\rho(X)\mathcal B_i\subseteq \mathcal H_{1-i}$.
\item[(e)] If $X,Y\in\germ g_1$ and $a,b\in\mathbb R$ then
$\rho(aX+bY)=a\rho(X)+b\rho(Y)$.
\item[(f)] $\pi(g)\rho(X)\pi(g^{-1})=\rho(g\cdot X)$ for all
$g\in G_0$ and $X\in\germ g_1$.
\item[(g)]  For every $X,Y\in \germ g_1$ we have 
$\rho(X)\mathcal B\subseteq D(\rho(Y))$.
\item[(h)] For every $X,Y\in \germ g_1$ and $v\in\mathcal B$ we have
$$
\rho(X)\rho(Y)v+\rho(Y)\rho(X)v=-\sqrt{-1}\pi^\infty([X,Y])v.
$$
\end{itemize} 
Then the following statements hold.
\begin{itemize}
\item[(i)]
For every $X\in\germ g_1$, the operator $\rho(X)$ is essentially
self adjoint, and the closure $\overline{\rho(X)}$  of $\rho(X)$ satisfies
$\mathcal H^\infty\subseteq D(\overline{\rho(X)})$. 
\item[(ii)]
Suppose that for every $X\in\germ g_1$
and $v\in\mathcal H^\infty$, we set
$\rho^\pi(X)v=\overline{\rho(X)}v$.
Then for every $X\in\germ g_1$ we have $\rho^\pi(X)\in\ennd_\mathbb C(\mathcal H^\infty)$. 
Moreover, $(\pi,\rho^\pi,\mathcal H)$ is a unitary representation of $(G_0,\germ g)$.
\item[(iii)]
Let
$(\pi',\rho^{\pi'},\mathcal H)$ be a unitary representation of
$(G_0,\germ g)$ in the same 
$\mathbb Z_2$-graded 
Hilbert space $\mathcal H$. 
Suppose that for every $g\in G_0$ we have $\pi'(g)=\pi(g)$,
and for every 
$X\in\germ g_1$ and $v\in\mathcal B$ we have
$\rho^{\pi'}(X)v=\rho^\pi(X)v$.
Then $(\pi',\rho^{\pi'},\mathcal H)\simeq(\pi,\rho^\pi,\mathcal H)$,
and the intertwining isometry $T:\mathcal H\to\mathcal H$ 
yielding this unitary equivalence is the identity map.
\end{itemize}
\end{proposition}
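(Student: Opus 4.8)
\medskip
The plan is to reduce everything to the single structural identity coming from hypothesis~(h) with $Y=X$: on $\mathcal B$ one has $\rho(X)^2=-\tfrac{\sqrt{-1}}{2}\,\pi^\infty([X,X])$ with $[X,X]\in\germ g_0$. Write $A:=-\tfrac{\sqrt{-1}}{2}\,\pi^\infty([X,X])$, an operator with domain $\mathcal H^\infty$. Because $[X,X]\in\germ g_0$ and $\pi$ is unitary, $\overline A$ is self adjoint (up to the scalar $\tfrac{\sqrt{-1}}{2}$ it is the skew-adjoint generator of the one-parameter group $t\mapsto\pi(\exp(-t[X,X]))$), and $\langle Ab,b\rangle=\|\rho(X)b\|^2\ge0$ for $b\in\mathcal B$ by~(c), so $\overline A\ge0$. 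A preliminary lemma that I would establish first, and use throughout, is that $\mathcal B$ is dense in $\mathcal H^\infty$ \emph{in the Fr\'echet topology of smooth vectors}: if $V$ is the Fr\'echet-closure of $\mathcal B$ inside $\mathcal H^\infty$, then $V$ is $G_0$-invariant and Fr\'echet-closed, so $\pi(f)$ maps $V$ into $V$ for every $f\in C_c^\infty(G_0)$; approximating an arbitrary $v\in\mathcal H$ in norm by elements of $\mathcal B\subseteq V$ and using continuity of $u\mapsto\pi(f)u$ from $\mathcal H$ into $\mathcal H^\infty$ shows $\pi(f)v\in V$, and letting $f$ run through an approximate identity gives $v\in V$ for every $v\in\mathcal H^\infty$. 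In particular $\mathcal B$ is a core for $\overline A$.

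To prove~(i), let $T$ denote $\rho(X)$ with its given domain $D(\rho(X))\supseteq\mathcal B$; it is symmetric by~(c), and by~(g) with $Y=X$ one has $\rho(X)\mathcal B\subseteq D(\rho(X))$, so for $b\in\mathcal B$ both $(T\pm\sqrt{-1})b$ lie in $D(T)$ and $(T+\sqrt{-1})(T-\sqrt{-1})b=(T-\sqrt{-1})(T+\sqrt{-1})b=(\rho(X)^2+1)b=(A+1)b$. Since $\overline A+1\ge1$ is invertible and $\mathcal B$ is a core for $\overline A$, the subspace $(A+1)\mathcal B$ is dense in $\mathcal H$; hence both ranges $(T\pm\sqrt{-1})D(T)$ are dense, the deficiency indices of $T$ vanish, and $\rho(X)$ is essentially self adjoint. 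Using~(g) once more, $\overline{\rho(X)}^{\,2}$ is a self adjoint operator containing $A|_{\mathcal B}$; since $A|_{\mathcal B}$ is essentially self adjoint this forces $\overline{\rho(X)}^{\,2}=\overline A$, and therefore $\mathcal H^\infty=D(A)\subseteq D(\overline A)=D\big(\overline{\rho(X)}^{\,2}\big)\subseteq D\big(\overline{\rho(X)}\big)$, which is~(i).

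For~(ii) I would set $\rho^\pi(X):=\overline{\rho(X)}|_{\mathcal H^\infty}$ (well defined by~(i)) and first show that it preserves $\mathcal H^\infty$. Taking closures in the operator identity~(f) gives $\overline{\rho(g\cdot X)}=\pi(g)\,\overline{\rho(X)}\,\pi(g)^{-1}$ for every $g\in G_0$, whence $\pi(g)\,\overline{\rho(X)}\,v=\overline{\rho(g\cdot X)}\,\pi(g)v$ for $v\in\mathcal H^\infty$. Now each $\overline{\rho(Y)}$, $Y\in\germ g_1$, restricts to a closed, hence (closed graph theorem, Fr\'echet into Banach) continuous, map $\mathcal H^\infty\to\mathcal H$, depending linearly on $Y$; since $g\mapsto g\cdot X$ and $g\mapsto\pi(g)v$ are smooth, the right-hand side is a smooth $\mathcal H$-valued function of $g$, so $\overline{\rho(X)}v$ is a smooth vector. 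Thus $\rho^\pi(X)\in\ennd_{\mathbb C}(\mathcal H^\infty)$, and it is continuous on $\mathcal H^\infty$ by the closed graph theorem (the second remark after Definition~\ref{unirep}). It then remains to verify the axioms of Definition~\ref{unirep}: (a) is part of the hypotheses; (b) holds because $\rho^\pi(X)$ is odd on $\mathcal B$ by~(d), hence odd on $\mathcal H^\infty$ by Fr\'echet-density and continuity, while symmetry is inherited from self-adjointness of $\overline{\rho(X)}$; (c) holds on $\mathcal B$ by~(h) (here $\rho(Y)b=\rho^\pi(Y)b\in\mathcal H^\infty$ and~(g) makes the compositions meaningful) and extends to $\mathcal H^\infty$ because both sides are continuous there and $\mathcal B$ is Fr\'echet-dense; (d) is the restriction to $\mathcal H^\infty$ of the displayed operator identity; and $\mathbb R$-linearity of $X\mapsto\rho^\pi(X)$ comes from~(e).

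For~(iii), since $\pi'=\pi$ the two spaces of smooth vectors coincide and $T=\mathrm{id}_{\mathcal H}$ is an even isometry with $T\pi'(g)=\pi(g)T$; and for each $X\in\germ g_1$ the operators $\rho^{\pi'}(X)$ and $\rho^\pi(X)$ are endomorphisms of $\mathcal H^\infty$, hence continuous there, and agree on the Fr\'echet-dense subspace $\mathcal B$, so they agree on all of $\mathcal H^\infty$ and $T$ is an intertwining operator. I expect the real difficulty to be precisely the passage between the \emph{algebraic} invariant domain $\mathcal B$ and the \emph{analytic} domain $\mathcal H^\infty$: the two places where genuine analysis enters are the Fr\'echet-density lemma (a G{\aa}rding-type smoothing argument) and the proof that $\overline{\rho(X)}$ maps $\mathcal H^\infty$ into itself, which is exactly where the equivariance hypothesis~(f) is indispensable. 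One must also keep in mind the subtle but elementary point that $\rho(X)$ need not preserve $\mathcal B$, which is why in~(i) one argues with the full domain $D(\rho(X))$ rather than with $\mathcal B$ alone.
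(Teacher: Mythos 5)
The paper does not reprove this statement; it records it as a simplified restatement of \cite[Proposition 2]{vara}, so there is no in-paper proof to compare against. Judged on its own, your reconstruction is correct and uses the natural ingredients: the Fr\'{e}chet-density of $\mathcal B$ in $\mathcal H^\infty$ (via $\pi(f)$ for $f\in C_c^\infty(G_0)$ together with an approximate identity), the deficiency-index computation for $\rho(X)$ using $(T\mp\sqrt{-1})(T\pm\sqrt{-1})b=(A+1)b$ with $A=-\tfrac{\sqrt{-1}}{2}\pi^\infty([X,X])$ nonnegative and $\mathcal B$ a core for $\overline A$, the identification $\overline{\rho(X)}^{\,2}=\overline A$ to get $\mathcal H^\infty\subseteq D(\overline{\rho(X)})$, and the equivariance identity $\overline{\rho(g\cdot X)}=\pi(g)\overline{\rho(X)}\pi(g)^{-1}$ to show that $\overline{\rho(X)}$ preserves $\mathcal H^\infty$. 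Two places where you rely on facts worth spelling out: that $\pi(f)$ maps the Fr\'{e}chet closure $V$ of $\mathcal B$ into itself is a Bochner-integral argument (the $\mathcal H^\infty$-valued integral $\int f(g)\pi(g)w\,dg$ converges in the Fr\'{e}chet topology and its Riemann sums lie in the closed subspace $V$), and passing from (h) on $\mathcal B$ to Definition \ref{unirep}(c) on $\mathcal H^\infty$ uses that $\rho^\pi(Y)b=\rho(Y)b\in\mathcal H^\infty\cap D(\rho(X))$ for $b\in\mathcal B$ by (g) plus the $\mathcal H^\infty$-invariance already established, together with Fr\'{e}chet continuity of both sides. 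You address both; I see no gap.
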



We conclude this section with a simple but useful lemma
about polarizing subalgebras. Let $\germ g$ be a Lie algebra and
fix $\lambda\in\germ g^*$. Recall that a 
Lie subalgebra $\germ m$ of $\germ g$ is called
a polarizing subalgebra corresponding to $\lambda$ if 
$\germ m$ is a maximal isotropic subspace of $\germ g$ 
for the skew-symmetric 
bilinear form $\omega_\lambda:\germ g\times\germ g\to\mathbb R$
defined by $\omega_\lambda(X,Y)=\lambda([X,Y])$.
\begin{lemma}
\label{nonzeropolar}
Let $\germ g$ be a nilpotent Lie algebra, $\lambda\in\germ g^*$, and 
$\germ m\subseteq \germ g$ be a Lie subalgebra. If $\lambda\neq 0$ and
$\germ m$ is a polarizing subalgebra of $\germ g$ corresponding to $\lambda$,
then there exists an $X\in\germ m$ such that $\lambda(X)\neq 0$.
\end{lemma}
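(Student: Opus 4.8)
The plan is to argue by contradiction: suppose $\lambda \neq 0$ but $\lambda$ vanishes identically on $\germ m$. First I would observe that since $\germ m$ is isotropic for $\omega_\lambda$, the subspace $\germ m$ is contained in the radical of $\omega_\lambda$ restricted to... no — more usefully, maximal isotropic subspaces of $\omega_\lambda$ satisfy $\dim \germ g - \dim \germ m = \dim \germ m - \dim \germ r$, where $\germ r$ is the radical of $\omega_\lambda$ (equivalently, the stabilizer $\germ g^\lambda = \{X : \lambda([X,\germ g]) = 0\}$), because on $\germ g/\germ r$ the form $\omega_\lambda$ is symplectic and $\germ m/\germ r$ is Lagrangian. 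In particular $\germ r \subseteq \germ m$, so $\germ m \supseteq \germ g^\lambda$.

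The key structural step uses nilpotency. Consider the descending (or ascending) central series; since $\germ g$ is nilpotent, if $\lambda \neq 0$ there is a smallest integer so that $\lambda$ is nonzero on some term of the ascending central series $\germ z_0 = 0 \subseteq \germ z_1 = \mathcal Z(\germ g) \subseteq \germ z_2 \subseteq \cdots$. Actually the cleanest route: pick the largest $k$ with $\lambda|_{\germ z_{k-1}} = 0$ but $\lambda|_{\germ z_k} \neq 0$ (this exists since the series exhausts $\germ g$ and $\lambda \neq 0$). Take $Z \in \germ z_k$ with $\lambda(Z) \neq 0$. Then for every $X \in \germ g$ we have $[X,Z] \in \germ z_{k-1}$, hence $\omega_\lambda(X,Z) = \lambda([X,Z]) = 0$. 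Thus $Z \in \germ g^\lambda \subseteq \germ m$, and $\lambda(Z) \neq 0$, giving the desired element of $\germ m$ and contradicting the assumption that $\lambda$ vanishes on $\germ m$.

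The only point requiring a little care — and the place I would slow down — is justifying $\germ g^\lambda \subseteq \germ m$ for a polarizing (i.e.\ maximal isotropic) subalgebra. This is standard: $\germ g^\lambda$ is the radical of the skew form $\omega_\lambda$, so $\germ g^\lambda$ together with any isotropic subspace still spans an isotropic subspace; by maximality $\germ g^\lambda \subseteq \germ m$. One does need that $\germ g^\lambda + \germ m$ is a \emph{subalgebra} for the argument to stay within the class of polarizing subalgebras, but in fact for the conclusion we only need that $\germ m$ is maximal \emph{isotropic as a subspace}, which it is by definition, so no subalgebra check is needed — $\germ g^\lambda + \germ m$ is isotropic as a subspace and contains $\germ m$, forcing equality. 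Hence the main (and essentially only) obstacle, nilpotency supplying a central-modulo-lower-terms element on which $\lambda$ is nonzero, is handled by the ascending central series argument above; everything else is linear algebra.
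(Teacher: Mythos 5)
Your proof is correct, and it takes a genuinely different route from the paper's. The paper argues by contradiction via the normalizer: if $\germ m\subseteq\ker\lambda$, it invokes the nilpotency fact that $N_\germ g(\germ m)\supsetneq\germ m$ for any proper subalgebra, picks $X\in N_\germ g(\germ m)\setminus\germ m$, and observes that $\germ m'=\germ m\oplus\mathbb RX$ is a strictly larger isotropic subspace since $[\germ m',\germ m']\subseteq\germ m\subseteq\ker\lambda$, contradicting maximality. Your argument instead directly exhibits the element: you note that the radical $\germ g^\lambda$ of $\omega_\lambda$ sits inside any maximal isotropic subspace, then use the ascending central series to produce $Z\in\germ z_k$ with $\lambda(Z)\neq 0$ and $\lambda|_{\germ z_{k-1}}=0$, so that $[\germ g,Z]\subseteq\germ z_{k-1}\subseteq\ker\lambda$ forces $Z\in\germ g^\lambda\subseteq\germ m$. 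Both proofs rest on $\germ m$ being maximal isotropic as a subspace (not merely among isotropic subalgebras), and both use a standard nilpotency fact — yours the strict growth of the ascending central series, the paper's the strict growth of normalizers. Your version is constructive rather than by contradiction (the contradiction framing you open with is actually superfluous, since the argument produces $Z$ directly), and your side-remark that no subalgebra check is needed for $\germ g^\lambda+\germ m$ is exactly right. The paper's version is shorter and avoids introducing the central series, but yields no explicit witness.
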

\begin{proof}
Suppose, on the contrary, that $\germ m\subseteq\mathrm{ker}\,\lambda$.
Then from 
$\lambda\neq 0$ it follows that $\germ g\supsetneq\germ m$. If 
$$
N_\germ g(\germ m)=\{Y\in\germ g\ |\ [Y,\germ m]\subseteq \germ m\}
$$
then $N_\germ g(\germ m)$ is a Lie subalgebra of $\germ g$ and 
$N_\germ g(\germ m)\supsetneq\germ m$. Choose an $X\in N_\germ g(\germ m)$
such that $X\notin \germ m$ and set $\germ m'=\germ m\oplus\mathbb RX$. 
It is easy to check that $[\germ m',\germ m']\subseteq \germ m$
and thus $\lambda([\germ m',\germ m'])=\{0\}$ which
contradicts maximality of dimension of $\germ m$.

\end{proof}

\section{Special induction}
\label{specialindsec}
\subsection{Realization of the induced representation}
\label{realization}
Let $(G_0,\germ g)$ be a super Lie group and $(H_0,\germ h)$ be 
a sub super Lie group of $(G_0,\germ g_0)$, i.e., $H_0\subseteq G_0$
and $\germ h\subseteq \germ g$. As in \cite[\S3.2]{vara}, we assume that 
$(H_0,\germ h)$ is \emph{special}, i.e., that 
$\germ h_1=\germ g_1$. 
For every unitary representation 
$(\sigma,\rho^\sigma,\mathcal K)$ of $(H_0,\germ h)$, 
the representation  
of $(G_0,\germ g)$ induced from $(\sigma,\rho^\sigma,\mathcal K)$
is defined 
in \cite[\S3]{vara}. We recall the definition of special
induction only in a case which we need in this paper, i.e., when 
the Lie groups
$G_0$ and $H_0$ are unimodular.
In this case, to define the representation 
$(\pi,\rho^\pi,\mathcal H)$ of $(G_0,\germ g)$  
induced from 
$(\sigma,\rho^\sigma,\mathcal K)$,
one fixes a $G_0$-invariant 
measure $\mu$ on $H_0\backslash G_0$ and defines $\mathcal H$
as the space of measurable functions $f:G_0\to \mathcal K$ such that
\begin{itemize}
\item[(a)] For any $g\in G_0$ and $h\in H_0$, we have 
$f(hg)=\sigma(h)f(g)$.
\item[(b)] $\int_{H_0\backslash G_0}||f(g)||^2d\mu<\infty$.
\end{itemize}
The action of every $g\in G_0$ on every $f\in\mathcal H$ is the usual 
right regular representation, i.e.,
$$
\textrm{if }g,g'\in G_0\textrm{ then }\big(\pi(g)f\big)\,(g')=f(g'g).
$$ 
Recall that $\mathcal H^\infty$ is 
the space of smooth vectors of $(\pi,\mathcal H)$.
It is well-known that 
$$
\mathcal H^\infty\subseteq C^\infty(G_0,\mathcal K),
$$ 
where
$C^\infty(G_0,\mathcal K)$ denotes the space of smooth functions 
$f:G_0\to \mathcal K$
(see \cite[Theorem 5.1]{poulsen} or 
\cite[Theorem A.1.4]{corgr}). Moreover, one can check that 
for every $f\in\mathcal H^\infty$, we have $f(G_0)\subseteq \mathcal K^\infty$.

Let $\mathcal H^{\infty,c}$ be the space consisting of 
functions $f:G_0\to \mathcal K$ such that  
$$
f\in\mathcal H\cap C^\infty(G_0,\mathcal K)
$$ 
and $\mathrm{Supp}(f)$ is compact modulo $H_0$.
It is shown in 
\cite[Proposition 4]{vara} that $\mathcal H^{\infty,c}\subseteq \mathcal H^\infty$,
the subspace $\mathcal H^{\infty,c}$ is dense in 
$\mathcal H$, and for every $X\in\germ g_0$ we have
$
\pi^\infty(X)\mathcal H^{\infty,c}\subseteq\mathcal H^{\infty,c}
$.
The action of $\germ g_1$ is initially defined on $\mathcal H^{\infty,c}$.
For every $X\in\germ g_1$ and 
$f\in\mathcal H^{\infty,c}$, one defines 
\begin{equation}
\label{oddpartaction}
\big(\rho^\pi(X)f\big)\,(g)=\rho^\sigma(g\cdot X)\big(f(g)\big).
\end{equation}
From Proposition \ref{varadarajan}
it follows that the domain of the closure of 
$\rho^\pi(X)$ contains $\mathcal H^\infty$, and consequently
the induced representation $(\pi,\rho^\pi,\mathcal H)$ is 
well-defined. We will denote the latter representation by
$$
\ind_{(H_0,\germ h)}^{(G_0,\germ g)}(\sigma,\rho^\sigma,\mathcal K).
$$

\subsection{Special induction in stages}
A basic but important property of special induction is that
it can be done in stages. The proof of this property is not difficult, but
it is not mentioned in \cite{vara} explicitly. For the reader's convenience,
we would like to sketch it below.
\begin{proposition}
\label{indtransitivity}
Suppose that $(G_0,\germ g)$ is a super Lie group,
$(K_0,\germ k)$ is a special sub super Lie group of $(G_0,\germ g)$,
and $(H_0,\germ h)$ is a special sub super Lie group of $(H_0,\germ h)$.
Assume that $G_0,K_0$, and $H_0$ are unimodular, and let 
$(\sigma,\rho^\sigma,\mathcal K)$ be a unitary repsentation of 
$(H_0,\germ h)$. Then
\begin{equation}
\label{transi}
\ind_{(H_0,\germ h)}^{(G_0,\germ g)}(\sigma,\rho^\sigma,\mathcal K)
\simeq\ind_{(K_0,\germ k)}^{(G_0,\germ g)} 
\ind_{(H_0,\germ h)}^{(K_0,\germ k)}(\sigma,\rho^\sigma,\mathcal K).
\end{equation}
\end{proposition}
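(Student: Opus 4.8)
The plan is to reduce the statement to the classical induction-in-stages for the even Lie groups together with the uniqueness part of Proposition \ref{varadarajan}(iii). First I would set up notation: write $(\pi,\rho^\pi,\mathcal H)$ for the left-hand side of \eqref{transi} and $(\pi',\rho^{\pi'},\mathcal H')$ for the right-hand side, and recall that by specialness $\germ h_1=\germ k_1=\germ g_1$, so the odd parts of all three groups coincide. On the classical side, the restriction of $\pi$ and $\pi'$ to $G_0$ are the ordinary induced unitary representations $\ind_{H_0}^{G_0}(\sigma|_{H_0})$ and $\ind_{K_0}^{G_0}\ind_{H_0}^{K_0}(\sigma|_{H_0})$, which are unitarily equivalent by the usual induction-in-stages theorem for unitary representations of (unimodular) Lie groups; let $U:\mathcal H\to\mathcal H'$ be the standard intertwining isometry. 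Concretely $U$ sends a function $f:G_0\to\mathcal K$ to the function $g\mapsto [\,k\mapsto f(kg)\,]$, realizing the transitivity of the quotient maps $H_0\backslash G_0$, $H_0\backslash K_0$, $K_0\backslash G_0$; choosing the invariant measures compatibly makes $U$ an isometry.

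Next I would transport the odd action through $U$. The key computation is to check that $U$ carries $\rho^\pi(X)$ to $\rho^{\pi'}(X)$ for every $X\in\germ g_1$, and for this it suffices, by Proposition \ref{varadarajan}(iii), to verify the identity on a single convenient dense invariant core rather than on all of $\mathcal H^\infty$. I would take the core to be $\mathcal H^{\infty,c}$ (functions smooth and compactly supported modulo $H_0$), which by \cite[Proposition 4]{vara} is dense, $G_0$-invariant, and contained in $\mathcal H^\infty$, and note that $U$ maps it into the corresponding compact-support-mod-$K_0$ space of smooth vectors for $(\pi',\rho^{\pi'},\mathcal H')$. On $\mathcal H^{\infty,c}$ the odd action is given by the pointwise formula \eqref{oddpartaction}: $(\rho^\pi(X)f)(g)=\rho^\sigma(g\cdot X)(f(g))$. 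Applying $U$, unwinding the two-step definition of the induced odd action on the right-hand side, and using that the inner induced representation $\ind_{(H_0,\germ h)}^{(K_0,\germ k)}(\sigma,\rho^\sigma,\mathcal K)$ has its own odd action given by the analogous pointwise formula over $H_0\backslash K_0$, one gets that both sides evaluate a function at $g\in G_0$ to the same element of $\mathcal K$, namely $\rho^\sigma((kg)\cdot X)(f(kg))$ viewed as a function of $k$. This is a direct substitution; the only subtlety is bookkeeping the nested cocycle/equivariance relations $f(hk)=\sigma(h)f(k)$ and the $G_0$-equivariance condition (d) of Definition \ref{unirep}, together with $\rho^{\ind}(g\cdot X)$ versus $\rho^\sigma((kg)\cdot X)$, which match precisely because $(kg)\cdot X=k\cdot(g\cdot X)$.

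Having matched $U\pi(g)=\pi'(g)U$ on all of $\mathcal H$ and $U\rho^\pi(X)=\rho^{\pi'}(X)U$ on the core $\mathcal H^{\infty,c}$, I would invoke the uniqueness statement: the representation $(\sigma'',\rho^{\sigma''},\mathcal H)$ defined by pulling $(\pi',\rho^{\pi'})$ back along $U$ agrees with $(\pi,\rho^\pi,\mathcal H)$ on $G_0$ and on the dense invariant subspace $\mathcal B=\mathcal H^{\infty,c}$, so Proposition \ref{varadarajan}(iii) forces $(\pi,\rho^\pi,\mathcal H)\simeq(\pi',\rho^{\pi'},\mathcal H')$ with $U$ itself the intertwining isometry. (One should record that the hypotheses (a)--(h) of Proposition \ref{varadarajan} needed to even speak of the pulled-back operators are inherited from the fact that $(\pi',\rho^{\pi'},\mathcal H')$ is already a genuine unitary representation and $U$ is an even isometry.) The main obstacle, and the step requiring care, is the second paragraph: getting the two nested induced odd actions to literally coincide on the core, because it involves keeping straight three quotient spaces and the compatibility of the chosen invariant measures, plus the equivariance identity (d). Everything else is formal once that pointwise identity is in hand.
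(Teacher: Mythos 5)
Your proposal is correct and follows essentially the same route as the paper: both use Mackey's classical induction-in-stages isometry $T$ (your $U$), given by $(Tf(g))(k)=f(kg)$, transport the odd action through it, verify the pointwise identity $T\rho^\pi(X)f=\rho^\nu(X)Tf$ on the core $\mathcal H^{\infty,c}$ by unwinding the nested formula \eqref{oddpartaction} and the compatibility $(kg)\cdot X=k\cdot(g\cdot X)$, and then appeal to Proposition \ref{varadarajan} to conclude. The only difference is presentational: the paper writes out the two-line computation of both sides evaluating to $\rho^\sigma(kg\cdot X)(f(kg))$, where you describe it verbally.
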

\begin{proof} Set
$$
(\pi,\rho^\pi,\mathcal H)=\ind_{(H_0,\germ h)}^{(G_0,\germ g)}(\sigma,\rho^\sigma,\mathcal K)\,,\ \    
(\eta,\rho^\eta,\mathcal L)=\ind_{(H_0,\germ h)}^{(K_0,\germ k)}(\sigma,\rho^\sigma,\mathcal K),$$ 
and 
$$
(\nu,\rho^\nu,\mathcal V)=\ind_{(K_0,\germ k)}^{(G_0,\germ g)}
(\eta,\rho^\eta,\mathcal L).
$$
Thus $\mathcal H, \mathcal L$, and $\mathcal V$ are function
spaces introduced in Section \ref{realization} which realize the 
corresponding induced representations.
Let $\mathcal H^{\infty,c}$ be the subspace
of $\mathcal H$ defined in Section \ref{realization}.
We define $\mathcal L^{\infty,c}$ and $\mathcal V^{\infty,c}$ similarly.
The intertwining map
$$
T:(\pi,\mathcal H)\to(\nu,\mathcal V)
$$ 
is given in \cite[\S 4]{mackey}.
We recall the definition of $T$.
Given a function $f:G_0\to\mathcal K$ such that 
$f\in\mathcal H^{\infty,c}$,
the function 
$Tf:G_0\to \mathcal L$ 
is obtained as follows. For every $g\in G_0$ and $k\in K_0$,
$$
\big(Tf(g)\big)(k)=f(kg).
$$
One can normalize the involved measures such that for every 
$f\in\mathcal H^{\infty,c}$,
we have $||Tf||=||f||$. 

Fix an $f\in\mathcal H^{\infty,c}$ and a $V\in\germ g_1$. 
Since $f$ is a smooth vector for $\pi=\ind_{H_0}^{G_0}\sigma$
and $T$ is an interwining isometry, 
$Tf$ is a smooth vector
for 
$$
\nu=\ind_{H_0}^{G_0}\ind_{K_0}^{H_0}\sigma.
$$ 
By Proposition \ref{varadarajan}, 
to prove Proposition \ref{indtransitivity} it suffices to show that
\begin{equation}
\label{equationforactionss}
\textrm{for every }f\in\mathcal H^{\infty,c}\textrm{ and }
V\in\germ g_1,\ \ T\rho^\pi(V)f=\rho^\nu(V)Tf.
\end{equation}
Since $\mathrm{Supp}(f)$
is compact modulo $H_0$, it follows readily that $\mathrm{Supp}(Tf)$ is compact modulo
$K_0$, and for every $g\in G_0$ the support of 
$$
Tf(g):K_0\to\mathcal K
$$ is compact
modulo $H_0$. From \cite[Proposition 4]{vara}
it follows that $Tf\in\mathcal V^{\infty,c}$
and for every $g\in G_0$ we have $Tf(g)\in \mathcal L^{\infty,c}$.
By the definition of 
special induction given in Section \ref{realization}, 
the action of ${\rho^\nu}(V)$ on $Tf$ is 
given by (\ref{oddpartaction}).
For the same reason, the action of $\rho^\eta(V)$
on $Tf(g)$ is given by (\ref{oddpartaction}).
Thus for every $g\in G_0$, and $k\in K_0$,
\begin{eqnarray*}
\Big(\big(\rho^\nu(V)Tf\big)\,(g)\Big)(k)&=&
\Big(\,\rho^\eta(g\cdot V)\big(Tf(g)\big)\,\Big)(k)\\
&=&\rho^\sigma(kg\cdot V)
\Big(\,\big(Tf(g)\big)(k)\,\Big)=\rho^\sigma(kg\cdot V)\big(f(kg)\big).
\end{eqnarray*}
To finish the proof of (\ref{equationforactionss})  
note
that 
\begin{eqnarray*}
\Big(\,\big(T\rho^\pi(V)f\big)\,(g)\,\Big)(k)&=&\big(\rho^\pi(V)f\big)\,(kg)\\
&=&\rho^\sigma(kg\cdot V)\big(f(kg)\big)
=\Big(\,\big(\rho^\nu(V)Tf\big)\,(g)\,\Big)(k).
\end{eqnarray*}

\end{proof}

\section{Reduced forms and super Lie groups of Clifford type}
\label{nilpsupersec}

\subsection{The reduced form of a super Lie group}
\label{thereduced}

Contrary to the case of locally compact groups, 
super Lie groups do not necessarily have faithful representations.
The next lemma presents a simple but key example of such elements.
\begin{lemma}
Let $(G_0,\germ g)$ be a super Lie group and 
$(\pi,\rho^\pi,\mathcal H)$ be a unitary representation of
$(G_0,\germ g)$. If
$X_1,...,X_m\in\germ g_1$ satisfy
$$
\sum_{i=1}^m[X_i,X_i]=0
$$
then $\rho^\pi(X_i)=0$ for every $1\leq i\leq m$.
\end{lemma}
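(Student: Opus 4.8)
The plan is to exploit the defining anticommutation relation of part (c) of Definition \ref{unirep}, together with the symmetry of the operators $\rho^\pi(X)$ on the space $\mathcal H^\infty$ of smooth vectors.

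First I would specialize the relation in Definition \ref{unirep}(c) to the diagonal case $X=Y=X_i$. Since $\rho^\pi(X_i)\in\ennd_\mathbb C(\mathcal H^\infty)$, the vector $\rho^\pi(X_i)v$ again lies in $\mathcal H^\infty$, so the relation applies verbatim and gives, for every $v\in\mathcal H^\infty$,
$$
2\,\rho^\pi(X_i)^2 v=-\sqrt{-1}\,\pi^\infty([X_i,X_i])v .
$$
Now I would sum over $i=1,\dots,m$. Because $[X_i,X_i]\in\germ g_0$, the right-hand side only involves $\pi^\infty$ of even elements, where $\pi^\infty$ is $\mathbb R$-linear and everywhere defined on $\mathcal H^\infty$; hence, using the hypothesis $\sum_{i=1}^m[X_i,X_i]=0$,
$$
2\sum_{i=1}^m\rho^\pi(X_i)^2 v=-\sqrt{-1}\,\pi^\infty\!\Big(\sum_{i=1}^m[X_i,X_i]\Big)v=0\qquad\text{for all }v\in\mathcal H^\infty .
$$

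Next I would pair this identity with $v$ and invoke part (b) of Definition \ref{unirep}: each $\rho^\pi(X_i)$ is symmetric on $\mathcal H^\infty$, and since $\rho^\pi(X_i)v\in\mathcal H^\infty$ the symmetry may be applied to obtain $\langle\rho^\pi(X_i)^2 v,v\rangle=\langle\rho^\pi(X_i)v,\rho^\pi(X_i)v\rangle=\|\rho^\pi(X_i)v\|^2$. Summing, $\sum_{i=1}^m\|\rho^\pi(X_i)v\|^2=0$, so every summand vanishes; thus $\rho^\pi(X_i)v=0$ for all $i$ and all $v\in\mathcal H^\infty$, i.e.\ $\rho^\pi(X_i)=0$. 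The argument is essentially just these two short computations, so there is no substantial obstacle; the only point requiring care is domain bookkeeping, namely keeping every operation inside $\mathcal H^\infty$, where the operators $\rho^\pi(X)$ are everywhere defined, symmetric, and $\mathcal H^\infty$-valued — this is precisely what makes the repeated use of the anticommutation relation and of symmetry legitimate without any unbounded-operator subtleties.
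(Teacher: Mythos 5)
Your argument is correct and is essentially the paper's proof: both derive $\sum_i\rho^\pi(X_i)^2=0$ on $\mathcal H^\infty$ from the anticommutation relation of Definition~\ref{unirep}(c), then use symmetry of each $\rho^\pi(X_i)$ to turn $\langle v,\sum_i\rho^\pi(X_i)^2v\rangle=0$ into a vanishing sum of nonnegative norms. The only difference is that you spell out the domain bookkeeping a bit more explicitly; the substance is identical.
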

\begin{proof}
We have 
$$
\sum_{i=1}^m \rho^\pi(X_i)^2=-{\sqrt{-1}\over 2}
\sum_{i=1}^m\pi^\infty([X_i,X_i])=
-{\sqrt{-1}\over 2}\pi^\infty(\sum_{i=1}^m[X_i,X_i])=0.
$$ 
Since every $\rho^\pi(X_i)$ is symmetric, for every 
$v\in \mathcal H^\infty$ we have 
$$
\sum_{i=1}^m\langle \rho^\pi(X_i)v,\rho^\pi(X_i)v\rangle=
\langle v,\sum_{i=1}^m\rho^\pi(X_i)^2v\rangle=0.
$$
Therefore for every $1\leq i\leq m$ we have
$\langle \rho^\pi(X_i)v,\rho^\pi(X_i)v\rangle=0$, which
implies that $\rho^\pi(X_i)v=0$.

\end{proof}

The proof of the following proposition is easy by induction.
\begin{proposition}
\label{nilrad}
Let $(G_0,\germ g)$ be a super Lie group.
Let $\germ a^{(1)}$ be the ideal of $\germ g$ generated by 
all $X\in\germ g_1$ such that $[X,X]=0$. For every $m>1$,
let $\germ a^{(m)}$ be the ideal of $\germ g$ generated 
by elements $X\in\germ g_1$ such that $[X,X]\in\germ a^{(m-1)}$.
Then for every unitary representation $(\pi,\rho^\pi,\mathcal H)$
of $(G_0,\germ g)$, the $\mathbb Z_2$-graded ideal 
$\bigcup_{m=1}^\infty\germ a^{(m)}$
acts trivially on $\mathcal H$, i.e., 
$\rho^\pi(X)=0$ if $X\in\germ g_1\cap\bigcup_{m=1}^\infty\germ a^{(m)}$, 
and $\pi^\infty(X)=0$ 
if $X\in\germ g_0\cap \bigcup_{m=1}^\infty\germ a^{(m)}$. 
\end{proposition}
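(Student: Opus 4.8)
The plan is to produce one $\mathbb Z_2$-graded ideal of $\germ g$ that acts trivially on $\mathcal H$ in the stated sense, and then show by induction on $m$ that it contains $\germ a^{(m)}$. The natural candidate is the ``kernel'' of the representation on $\mathcal H^\infty$: set
$$
\germ k=\{\,X_0+X_1\in\germ g_0\oplus\germ g_1\ |\ \pi^\infty(X_0)=0\text{ and }\rho^\pi(X_1)=0\text{ on }\mathcal H^\infty\,\},
$$
which is visibly a $\mathbb Z_2$-graded subspace, and note that the Proposition is precisely the assertion $\bigcup_{m}\germ a^{(m)}\subseteq\germ k$. The first thing to establish is that $\germ k$ is an \emph{ideal} of $\germ g$. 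This amounts to saying that $\germ k$ is the kernel of the Lie superalgebra representation of $\germ g$ on $\mathcal H^\infty$ from part~1 of the Remark after Definition~\ref{unirep}, so it follows from \cite[Proposition~1]{vara} together with that Remark; concretely one checks the three bracket cases directly --- $[\germ g_0,\germ g_0]$ because $\pi^\infty$ is a Lie algebra homomorphism on $\mathcal H^\infty$, $[\germ g_0,\germ g_1]$ from the identity $\rho^\pi([X,Y])v=\pi^\infty(X)\rho^\pi(Y)v-\rho^\pi(Y)\pi^\infty(X)v$ recorded in the Remark, and $[\germ g_1,\germ g_1]$ straight from Definition~\ref{unirep}(c) --- keeping in mind that $\pi^\infty(\germ g_0)$ and $\rho^\pi(\germ g_1)$ preserve $\mathcal H^\infty$, so all the compositions involved can be evaluated there.

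Now I would run the induction. For the base case, if $X\in\germ g_1$ with $[X,X]=0$ then the preceding Lemma (applied to the single element $X$) gives $\rho^\pi(X)=0$, i.e.\ $X\in\germ k$; since $\germ k$ is an ideal containing every generator of $\germ a^{(1)}$, it contains $\germ a^{(1)}$. For the inductive step, assume $\germ a^{(m-1)}\subseteq\germ k$ and let $X\in\germ g_1$ with $[X,X]\in\germ a^{(m-1)}$. Then $[X,X]$ is an even element of $\germ k$, so $\pi^\infty([X,X])=0$ on $\mathcal H^\infty$, and Definition~\ref{unirep}(c) gives $2\rho^\pi(X)^2v=-\sqrt{-1}\,\pi^\infty([X,X])v=0$ for every $v\in\mathcal H^\infty$; symmetry of $\rho^\pi(X)$ then forces $\langle\rho^\pi(X)v,\rho^\pi(X)v\rangle=\langle v,\rho^\pi(X)^2v\rangle=0$, hence $\rho^\pi(X)=0$ and $X\in\germ k$. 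As $\germ k$ is an ideal containing every generator of $\germ a^{(m)}$, we get $\germ a^{(m)}\subseteq\germ k$, completing the induction. Therefore $\bigcup_{m}\germ a^{(m)}\subseteq\germ k$; this union is itself a $\mathbb Z_2$-graded ideal, since each $\germ a^{(m)}$ is generated by homogeneous elements and $\germ a^{(1)}\subseteq\germ a^{(2)}\subseteq\cdots$ (because $0\in\germ a^{(1)}$).

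There is no real obstacle --- as the paper says, the argument is an easy induction --- and the only step that calls for attention is checking that $\germ k$ is an ideal. Because $\pi^\infty$ and $\rho^\pi$ live only on the dense subspace $\mathcal H^\infty$ and consist of unbounded operators, one has to make sure that the bracket identities used are exactly the ones guaranteed by \cite[Proposition~1]{vara} and the Remark, and that every product of operators appearing in the verification is applied only to vectors of $\mathcal H^\infty$, where it makes sense.
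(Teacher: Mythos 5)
Your proof is correct and fills in exactly the ``easy induction'' the paper alludes to without writing out: you introduce the kernel $\germ k$ of the Lie superalgebra action on $\mathcal H^\infty$, verify it is a $\mathbb Z_2$-graded ideal using Definition~\ref{unirep}(c) and the identity from the Remark after it, and then show by induction on $m$ that $\germ a^{(m)}\subseteq\germ k$, with the base case coming from the preceding Lemma and the inductive step repeating that Lemma's symmetry-plus-square argument when $\pi^\infty([X,X])=0$. This matches the structure the paper intends and handles the unbounded-operator caveats appropriately.
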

Note that $\germ a^{(1)}\subseteq\germ a^{(2)}\subseteq\cdots$
and the set 
$\germ a[\germ g]=\bigcup_{m=1}^\infty\germ a^{(m)}$ is 
a $\mathbb Z_2$-graded ideal
of $\germ g$.
Therefore we have $\germ a[\germ g]=\germ a[\germ g]_0
\oplus\germ a[\germ g]_1$. Let $A_0$
be the normal subgroup of $G_0$ with Lie algebra 
$\germ a[\germ g]_0$. Then 
$(A_0,\germ a[\germ g])$ is a sub super Lie group 
of $(G_0,\germ g)$. Moreover, 
setting $\overline G_0=G_0/A_0$
and $\overline{\germ g}=\germ g/\germ a[\germ g]$ we obtain a nilpotent
super Lie group $(\overline G_0,\overline{\germ g})$. 

From now on, the super Lie
group $(\overline G_0,\overline{\germ g})$ will be called 
the \emph{reduced form} of $(G_0,\germ g)$. Obviously, the categories of 
unitary representations of $(G_0,\germ g)$ and 
$(\overline G_0,\overline{\germ g})$ 
are equivalent.

If $(G_0,\germ g)$ is a super Lie group with the property that
$\germ a[\germ g]=\{0\}$, then the super Lie group
$(G_0,\germ g)$ and the Lie superalgebra $\germ g$ are called 
\emph{reduced}.

\subsection{Heisenberg-Clifford super Lie groups}
\label{exampcliff}
In this section we introduce an important example of nilpotent 
super Lie groups which will be used in the rest of this paper.

Let $(\germ w,\omega)$ be a super symplectic vector
space, i.e., a $\mathbb Z_2$-graded vector space 
$\germ w=\germ w_0\oplus\germ w_1$ endowed with a 
nondegenerate bilinear form 
$$
\omega:\germ w\times\germ w\to\mathbb R
$$
with the following properties.
\begin{itemize}
\item[(a)] $\omega(\germ w_0,\germ w_1)=\omega(\germ w_1,\germ w_0)=\{0\}$. 
\item[(b)] Restriction of $\omega$ to $\germ w_0$ is a symplectic
form.
\item[(c)] Restriction of $\omega$ to $\germ w_1$ is a symmetric
form.
\end{itemize}
Consider the $\mathbb Z_2$-graded vector space
$$
\germ n=\germ w\oplus\mathbb R
$$
where $\germ n_0=\germ w_0\oplus\mathbb R$ and $\germ n_1=\germ w_1$.
We define a (super)bracket on $\germ n$ as follows. 
For every $P,Q\in\germ w$ and $a,b\in\mathbb R$,
we set
$$
[\,(P,a),(Q,b)\,]=(\,0\,,\,\omega(P,Q)\,).
$$
One can easily check that with this bracket 
$\germ n$ becomes a Lie superalgebra.
The latter Lie superalgebra is called a Heisenberg-Clifford
Lie superalgebra. 
If $N_0$ denotes the simply connected nilpotent Lie group
with Lie algebra $\germ n_0$, then the super Lie group 
$(N_0,\germ n)$ is
called a Heisenberg-Clifford super Lie group.

It may sometimes be more convenient to work with an explicit 
basis of the Heisenberg-Clifford Lie superalgebra.
One can always find a basis
\begin{equation}
\label{basisiss}\{Z,X_1,\ldots,X_m,
Y_1,\dots,Y_m,V_1,\ldots,V_n\}
\end{equation}
of $\germ n$ such that 
\begin{itemize}
\item[(a)]
$\germ n_0=\spn_\mathbb R\{Z,X_1,\ldots,X_m,Y_1,\ldots,Y_m\}
\textrm{ and }
\germ n_1=\spn_\mathbb R\{V_1,\ldots,V_n\}$. 
\item[(b)] For every $1\leq i\leq m$ we have 
$[X_i,Y_i]=Z$.
\item[(c)] For every $1\leq j\leq n$ we have 
\begin{equation}
\label{exampcliffeqn}
[V_j,V_j]=c_jZ
\end{equation} 
where $c_j\in\{1,-1\}$.
\item[(d)] $Z\in\mathcal Z(\germ n)$.
\end{itemize}

\subsection{Nilpotent supergroups}
Recall that a Lie superalgebra $\germ g$ is called
nilpotent if $\germ g$ appears in its own 
upper central series. (Equivalently, $\germ g$ is called
nilpotent if its lower central series 
has only finitely many nonzero terms.)

In this paper, a super Lie group $(N_0,\germ n)$ is called nilpotent if
it has the following properties.
\begin{itemize}
\item[(a)] $\germ n$ is a
nilpotent Lie superalgebra. 
\item[(b)] $N_0$ is a connected, simply connected, nilpotent
Lie group.
\end{itemize}
It follows that the exponentional map $\exp:\germ n_0\to N_0$ is an 
analytic diffeomorphism
which results in a bijective correspondence between Lie subgroups 
and Lie subalgebras.

\subsection{Structure of reduced forms} Our next task is
to state and prove a generalization of Kirillov's lemma 
\cite[Lemma 1.1.12]{corgr}. The proof of this generalization is
a slight modification of that of the original result.
Recall that $\mathcal Z(\germ n)$ denotes the centre of $\germ n$.

\begin{definition}
\label{cliftypedef}
A nilpotent Lie superalgebra $\germ n$ is said to be of Clifford type
if one of the following properties hold.
\begin{itemize}
\item[(a)] $\germ n=\{0\}$.
\item[(b)] $\germ n$ is a Heisenberg-Clifford Lie superalgebra such that
$\dim\germ n_0=1$ and the restriction of $\omega$ to $\germ n_1$
is positive definite. 
\end{itemize}
\end{definition}
In other words, $\germ n$ is of Clifford type if
either $\germ n=\{0\}$ or 
$\germ n$ satisfies both of the following properties.
\begin{itemize}
\item[(a)] $\dim \germ n_0=1$ and $\mathcal Z(\germ n)=\germ n_0$. 
\item[(b)] There exists a basis 
\begin{equation}
\label{cliftypedeff}
\{Z,V_1,\ldots,V_l\}
\end{equation}
of $\germ n$ such that 
$Z\in\mathcal Z(\germ n_0)$, $V_1,\ldots,V_l\in\germ n_1$, and 
for every 
$1\leq i\leq j\leq l$ we have $[V_i,V_j]=\delta_{i,j}Z$.
\end{itemize}
A nilpotent super Lie group $(N_0,\germ n)$ is said to be of Clifford type
whenever $\germ n$ is of Clifford type.\\

Note that the zero-dimensional Lie superalgebra
and the (unique)
Lie superalgebra 
$\germ n$ which satisfies $\dim\germ n=\dim\germ n_0=1$ are
also considered to be of Clifford type. Up to parity change,
irreducible 
unitary representations of their 
corresponding super Lie groups are one-dimensional and purely even. 
Up to parity change, trivial representation is the only such
representation 
of the first case. For the second case, 
these representations are 
unitary characters of the even part.

\begin{proposition}
\label{kirillovlemma}
Let $\germ n$ be a reduced nilpotent Lie superalgebra which satisfies
$\dim\germ n>1$ and $\dim\mathcal Z(\germ n)=1$. Then exactly one of the 
following two statements is true. 
\begin{itemize}
\item[(a)]
There exist three nonzero elements $X,Y,Z\in\germ n_0$ such that
$$
\germ n=\mathbb RX\oplus\mathbb RY\oplus
\mathbb RZ\oplus \germ w
$$ 
where $\germ w=\germ w_0\oplus\germ w_1$ is a $\mathbb Z_2$-graded 
subspace of $\germ n$,  
$[X,Y]=Z$, and $Z\in\mathcal Z(\germ n)$. Moreover,
the vector space
$$
\germ n'=\mathbb RY\oplus\mathbb RZ\oplus\germ w
$$ is a 
subalgebra of $\germ n$, and $Y\in\mathcal Z(\germ n')$. 
\item[(b)] $\germ n$ is a Lie superalgebra of Clifford type.
\end{itemize}
\end{proposition}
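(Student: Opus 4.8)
The plan is to mimic the proof of Kirillov's lemma \cite[Lemma 1.1.12]{corgr}, branching on the even part of the second term of the upper central series of $\germ n$, and to dispatch the genuinely super alternative, case (b), by a separate computation that exploits reducedness. First some preliminaries. Since $\dim\germ n>1$ the algebra $\germ n$ is not abelian, and its centre $\mathcal Z(\germ n)$ is a one-dimensional $\mathbb Z_2$-graded, hence homogeneous, subspace; it cannot be odd, since a nonzero odd central element $Z$ would satisfy $[Z,Z]=0$, while a reduced Lie superalgebra has no nonzero odd $X$ with $[X,X]=0$ (immediate from the definition of $\germ a[\germ g]$). Thus $\mathcal Z(\germ n)=\mathbb RZ$ with $Z\in\germ n_0$. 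As $\germ n$ is nilpotent and $\germ n\neq\mathcal Z(\germ n)$, the set $\germ z_2=\{x\in\germ n\mid[x,\germ n]\subseteq\mathcal Z(\germ n)\}$ is a $\mathbb Z_2$-graded ideal strictly containing $\mathbb RZ$ (one checks both properties directly). The two cases of the proposition will correspond to the alternative $(\germ z_2)_0\supsetneq\mathbb RZ$ versus $(\germ z_2)_0=\mathbb RZ$.

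In the first case I pick $Y\in(\germ z_2)_0\setminus\mathbb RZ$ and repeat the classical argument: $\ad Y\colon\germ n\to\mathbb RZ$ is even, it is nonzero because $Y\notin\mathcal Z(\germ n)$, and it annihilates $\germ n_1$ because $[Y,\germ n_1]\subseteq\germ n_1\cap\mathbb RZ=\{0\}$, so $\ker\ad Y=\ker(\ad Y|_{\germ n_0})\oplus\germ n_1$ has codimension one, equals the centraliser $C_{\germ n}(Y)$, and is a subalgebra. Choosing $X\in\germ n_0$ with $[X,Y]=Z$ (rescale any element on which $\ad Y$ does not vanish) gives $\germ n=\mathbb RX\oplus C_{\germ n}(Y)$, and since $Y$ and $Z$ are linearly independent, $C_{\germ n}(Y)=\mathbb RY\oplus\mathbb RZ\oplus\germ w$ for a graded complement $\germ w$, with $Y\in\mathcal Z(C_{\germ n}(Y))$. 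This is exactly statement (a), with $\germ n'=C_{\germ n}(Y)$.

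The main obstacle is the second case, $(\germ z_2)_0=\mathbb RZ$ — whence $(\germ z_2)_1\neq\{0\}$ — where the goal is to show $\germ n$ is of Clifford type; the crux is to prove $\germ n_0=\mathbb RZ$. The engine is the following observation. Suppose $W\in\germ n_0$ is such that $T:=\ad W|_{\germ n_1}\neq 0$; then $T$ is nilpotent, of index $m\geq 2$, and for any $V\in\germ n_1$ with $T^{m-1}V\neq 0$ the Leibniz rule $\ad W([T^aV,T^bV])=[T^{a+1}V,T^bV]+[T^aV,T^{b+1}V]$ yields, by induction on $k$, the identity $(\ad W)^{k}([V,V])=\sum_{i}\binom{k}{i}[T^iV,T^{k-i}V]$ in $\germ n_0$; for $k=2m-2$ every term with $i\neq m-1$ vanishes because $T^mV=0$, so $(\ad W)^{2m-2}([V,V])=\binom{2m-2}{m-1}[T^{m-1}V,T^{m-1}V]$. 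Hence, if in addition $(\ad W|_{\germ n_0})^{2}=0$, the left-hand side vanishes (as $2m-2\geq 2$), forcing $[T^{m-1}V,T^{m-1}V]=0$ with $T^{m-1}V\neq 0$, which is impossible for a reduced $\germ n$. Now if $\germ n_0\neq\mathbb RZ$, the nonzero nilpotent Lie algebra $\germ n_0/\mathbb RZ$ has a nonzero centre, so there is $W\in\germ n_0\setminus\mathbb RZ$ with $[W,\germ n_0]\subseteq\mathbb RZ$, hence $(\ad W|_{\germ n_0})^{2}=0$; moreover $[W,\germ n_1]\neq\{0\}$, for otherwise $[W,\germ n]\subseteq\mathbb RZ$ would place $W$ in $(\germ z_2)_0\setminus\mathbb RZ$, against the case hypothesis. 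The engine then produces a contradiction, so $\germ n_0=\mathbb RZ$.

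With $\germ n_0=\mathbb RZ$ we have $[\germ n_1,\germ n_1]\subseteq\mathbb RZ$, so $[V,V']=q(V,V')Z$ for a symmetric bilinear form $q$ on $\germ n_1$; reducedness forbids nonzero null vectors of $q$, so $q$ is definite, and after replacing $Z$ by $-Z$ if necessary we may take $q$ positive definite and apply Gram--Schmidt to obtain a basis $\{Z,V_1,\dots,V_l\}$ with $[V_i,V_j]=\delta_{ij}Z$ and $l\geq 1$ (since $\dim\germ n>1$), that is, $\germ n$ is of Clifford type. Mutual exclusivity is clear: in (a) the elements $X,Y,Z$ are linearly independent, so $\dim\germ n_0\geq 3$, whereas a Clifford type algebra has $\dim\germ n_0=1$. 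The only thing requiring care in the write-up is the bookkeeping around $\germ z_2$ and the parity constraints; the single genuinely new ingredient is the identity for $(\ad W)^{2m-2}([V,V])$ and its interplay with reducedness.
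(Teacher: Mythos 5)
Your proof is correct, and the two arguments coincide for the non-Clifford branch (your $\germ z_2$ is the paper's $\mathcal Z^1(\germ n)$, and picking $Y\in(\germ z_2)_0\setminus\mathbb RZ$ to get the codimension-one subalgebra $\ker\ad Y$ is exactly what the paper does). Where you genuinely diverge is in the Clifford branch. The paper handles $\mathcal Z^1(\germ n)\cap\germ n_0\subseteq\mathcal Z(\germ n)$ by downward induction on $\dim\germ n$: it picks a nonzero $V_1\in\mathcal Z^1(\germ n)\cap\germ n_1$, passes to $\germ n'=\ker(\ad V_1)$ (codimension one, still reduced with the same centre), invokes the inductive hypothesis to get an orthonormal Clifford basis of $\germ n'$, and then kills the possible sign discrepancy $[V_1,V_1]=-Z'$ by exhibiting the null vector $V_1+V_2$. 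You instead isolate the structural fact $\germ n_0=\mathbb RZ$ as the crux and prove it directly with the identity $(\ad W)^{2m-2}([V,V])=\binom{2m-2}{m-1}[T^{m-1}V,T^{m-1}V]$, which together with $(\ad W|_{\germ n_0})^2=0$ and reducedness rules out any $W$ acting nontrivially on $\germ n_1$; once $\germ n_0=\mathbb RZ$ the conclusion is an immediate exercise with the symmetric form $q$ (no null vectors forces definiteness, then Gram--Schmidt). Your route buys a non-inductive, more conceptual explanation of \emph{why} the even part must be one-dimensional in a reduced algebra whose $\mathcal Z^1$ is odd modulo the centre, and it folds the sign normalization into the single observation that a definite form can be taken positive by replacing $Z$ with $-Z$; the paper's route is shorter to state and stays closer to the classical Kirillov induction, but buries the key mechanism in the inductive step. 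Both are complete; your binomial identity is the one genuinely new ingredient and it is correctly applied.
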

\begin{proof}
Obviously $\mathcal Z(\germ n)\subseteq\germ n_0$, since if 
$X\neq 0$ and $X\in\germ n_1\cap\mathcal Z(\germ n)$, then $[X,X]=0$ which
contradicts the assumption that $\germ n$ is reduced. 

Fix an arbitrary nonzero $Z\in\mathcal Z(\germ n)$.
Since $\germ n$ is nilpotent,
we have 
$$
\mathcal Z(\germ n/\mathcal Z(\germ n))\neq\{0\}.
$$
Let $\mathcal Z^1(\germ n)$ denote the $\mathbb Z_2$-graded ideal of $\germ n$ 
which corresponds to $\mathcal Z(\germ n/\mathcal Z(\germ n))$
via the quotient map $\mathsf q:\germ n\to \germ n/\mathcal Z(\germ n)$.
Obviously $\mathcal Z^1(\germ n)\supsetneq\mathcal Z(\germ n)$.

First assume that 
$\mathcal Z^1(\germ n)\cap \germ n_0\nsubseteq \mathcal Z(\germ n)$.
We show that statement (a) of the proposition holds. 
Choose an arbitrary $Y\in\mathcal Z^1(\germ n)\cap \germ n_0$
such that $Y\notin\mathcal Z(\germ n)$, 
and consider the map $\ad_Y:\germ n\to \mathcal Z(\germ n)$. Since
$Y\in\germ n_0$ and $Y\notin\mathcal Z(\germ n)$, 
there exists an element $X\in \germ n_0$ such that $[X,Y]\neq 0$.
After an appropriate rescaling, we can assume that $[X,Y]=Z$. We can now take 
$\germ w$ to be a $\mathbb Z_2$-graded complement of $\mathbb RY\oplus\mathbb RZ$
in $\germ n'$, where
$$
\germ n'=\{\ V\in\germ n\ |\ [V,Y]=0\ \}.
$$

Next assume that 
$\mathcal Z^1(\germ n)\cap \germ n_0\subseteq\mathcal Z(\germ n)$. We
use induction on $\dim\germ n$ to show that $\germ n$  
is of Clifford type. 
Choose an arbitrary nonzero $V_1\in\mathcal Z^1(\germ n)\cap\germ n_1$. 
Since $\germ n$ is reduced, after an appropriate 
rescaling we can assume that $[V_1,V_1]=\pm Z$. 
If $\dim \germ n=2$, then
the proof is complete. Next assume $\dim \germ n>2$.
Set 
$$
\germ n'=\ker(\ad_{V_1}).
$$ Obviously $\germ n'$  
is a subalgebra of $\germ n$ and 
$\germ n=\germ n'\oplus\mathbb RV_1$ as vector spaces. 
It is easy to see that $\germ n'$ is reduced, 
$\mathcal Z(\germ n')=\mathcal Z(\germ n)$, and 
$\mathcal Z^1(\germ n')=\mathcal Z^1(\germ n)\cap \germ n'$. 
Therefore 
$\dim\mathcal Z(\germ n')=1$
and $\mathcal Z^1(\germ n')\cap\germ n'_0\subseteq\mathcal Z(\germ n')$. 
By induction hypothesis, there exists a basis 
$\{Z',V_2,\ldots,V_l\}$ for $\germ n'$ such that
$Z'\in\mathcal Z(\germ n')$, 
$V_l\in\germ n'_1$ for every $l>1$, 
and $[V_i,V_j]=\delta_{i,j}Z'$ 
for every $1<i\leq j\leq l$.
After rescaling $V_1$ appropriately, we have
$[V_1,V_1]=\pm Z'$. If $[V_1,V_1]=Z'$, then the proof is
complete. If $[V_1,V_1]=-Z'$, then it follows that 
$[V_1+V_2,V_1+V_2]=0$, contradicting the assumption that
$\germ n$ is reduced. 

\end{proof}
\subsection{Representations of super Lie groups of Clifford 
type}
\label{unitclifford}
Throughout this section, we assume 
that $(C_0,\germ c)$ is a super Lie group of Clifford type
such that $\germ c\neq\{0\}$. Let
$\{Z,V_1,\ldots,V_l\}$ be the basis of $\germ c$
given in (\ref{cliftypedeff}), and 
$(\pi,\rho^\pi,\mathcal H)$ be an irreducible unitary 
representation
of $(C_0,\germ c)$. By \cite[Lemma 5]{vara}, the action of $\rho^\pi(Z)$
is via multiplication by a scalar. It follows that $\mathcal H^\infty=\mathcal H$,
i.e., for every $1\leq i\leq l$ we have 
\begin{equation}
\label{fuldomain}
D(\rho^\pi(V_i))=\mathcal H.
\end{equation}
Fix $1\leq i\leq l$. Since $\rho^\pi(V_i)$ 
is symmetric, it is closable \cite[p. 316]{conway}. Thus 
(\ref{fuldomain}) implies
that $\rho^\pi(V_i)$ is a closed operator. Consequently, by the closed graph theorem, 
$\rho^\pi(V_i)$ is a bounded, self adjoint operator.

Since $\rho^\pi(V_1)$ is a self adjoint operator and 
$\pi^\infty(Z)=2\sqrt{-1}\rho^\pi(V_1)^2$, it follows that for every 
$v\in\mathcal H$ we have 
$$
\pi^\infty(Z)v=a\sqrt{-1}v
$$
where $a\geq 0$. 

If $a=0$, then for every $V\in\germ c_1$ the symmetric
operator 
$\rho^\pi(V)$ satisfies $\rho^\pi(V)^2=0$, and 
it follows immediately that $\rho^\pi(V)=0$.
Therefore $(\pi,\rho^\pi,\mathcal H)$ is a 
trivial representation. 

Next suppose $a>0$. Let $\langle Z-a\rangle$ denote the 
two-sided ideal of $\mathcal U(\germ c)$ 
generated by $Z-a$.
We can set $\rho(V)=\rho^\pi(V)$
for every $V\in\germ c_1$, and then extend $\rho$ to 
a homomorphism 
$$
\rho:\mathcal U(\germ c)/\langle Z-a\rangle\to\mathrm{End}_\mathbb C(\mathcal H)
$$
of associative (super)algebras. In this fashion, from a 
representation of $(C_0,\germ c)$ we obtain a representation of 
$\mathcal U(\germ c)/\langle Z-a\rangle$ on a complex $\mathbb Z_2$-graded 
vector space.

Fix a nonzero vector $v\in\mathcal H_0$ and consider the subspace
$\mathcal W\subseteq \mathcal H$ defined by
$$
\mathcal W=\spn_\mathbb C\{\rho(W)v\,|\,W\in\mathcal U(\germ c)\}.
$$
Since $\mathcal U(\germ c)$ is finite dimensional, 
$\mathcal W$ is finite dimensional as well, 
and hence it is a closed subspace of $\mathcal H$. 
It is easily seen that $\mathcal W$ is a $\mathbb Z_2$-graded, $\germ c$-invariant
(and hence $(C_0,\germ c)$-invariant) subspace of $\mathcal H$. Since 
$(\pi,\rho^\pi,\mathcal H)$ is 
irreducible, it follows that $\mathcal W=\mathcal H$. Therefore we have proved that
every irreducible unitary representation of $(C_0,\germ c)$ is finite dimensional.

Next observe that $\mathcal U(\germ c)/\langle Z-a\rangle$ is 
isomorphic (as a $\mathbb Z_2$-graded algebra) to a 
complex Clifford algebra.
It is a well-known result in the theory  of Clifford modules 
that up 
to parity change, 
a complex Clifford algebra
has a unique nontrivial finite 
dimensional irreducible $\mathbb Z_2$-graded representation. 
(See \cite[Chapter 5]{lawson} or \cite[Lemma 11]{vara}.)
If $\dim \germ c_1$ is odd, 
then the choice of $\mathbb Z_2$-grading does not matter, 
whereas if this dimension is even, then
parity change yields two non-isomorphic modules.
Conversely, fixing an $a>0$ and an irreducible $\mathbb Z_2$-graded
module $\mathcal K$ for the complex Clifford algebra
$\mathcal U(\germ c)/\langle Z-a\rangle$, 
one can obtain 
an irreducible unitary 
\footnote{Unitarity of this module follows from 
standard constructions of 
Clifford modules. See \cite{lawson} or 
\cite[Section 4.2]{vara}.}
representation 
$(\sigma_\mu,\rho^{\sigma_\mu},\mathcal K_\mu)$
of $(C_0,\germ c)$, where 
$\mathcal K_\mu=\mathcal K$ as a vector space and
$\mu:\germ c_0\to\mathbb R$ is an $\mathbb R$-linear functional
such that $\mu(Z)=a$ and 
\begin{equation}
\label{eqnabtmu}
\textrm{for every }W\in\germ c_0\textrm{ and }v\in \mathcal K_\mu,\ \ 
\sigma_\mu^\infty(W)v=\mu(W)\sqrt{-1}\,v.
\end{equation}
Note that the condition $a>0$ implies that
$\mu([V,V])>0$ for every $V\in\germ c_1$.

In conclusion, if $(\sigma_0,\rho^{\sigma_0},\mathcal K_0)$ denotes
the $(1|0)$-dimensional trivial representation of $(C_0,\germ c)$, then
we have proved the following statement.\footnote{Strictly speaking, 
there is ambiguity in the choice of 
$(\sigma_\mu,\rho^{\sigma_\mu},\mathcal K_\mu)$ up to parity change. 
However, for our purposes the choice of $\mathbb Z_2$-grading 
does not really matter, since
special induction commutes with parity change.}

\begin{proposition}
\label{propcl}
Let $(C_0,\germ c)$ be a super Lie group of Clifford type
and $(\pi,\rho^\pi,\mathcal H)$ be an irreducible unitary representation
of $(C_0,\germ c)$. Then there exists a unique 
$\mathbb R$-linear functional $\mu:\germ c_0\to\mathbb R$
satisfying $\mu([V,V])\geq 0$ for every $V\in\germ c_1$ such that
$$
(\pi,\rho^\pi,\mathcal H)\eqsim
(\sigma_\mu,\rho^{\sigma_\mu},\mathcal K_\mu).
$$ 
The representation $(\pi,\rho^\pi,\mathcal H)$ is 
trivial if and only if $\mu=0$.
\end{proposition}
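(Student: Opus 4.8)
The plan is to extract the functional $\mu$ from the scalar action of the centre and the self-adjoint operators $\rho^\pi(V_i)$, then invoke the already-established classification of Clifford modules to identify the representation. First I would recall, exactly as in the discussion preceding the proposition, that since $(C_0,\germ c)$ is of Clifford type the centre $\mathcal Z(\germ c)=\germ c_0$ is one-dimensional, spanned by $Z$; by \cite[Lemma 5]{vara} the operator $\rho^\pi(Z)$ acts by a scalar, hence $\mathcal H^\infty=\mathcal H$, each $\rho^\pi(V_i)$ is everywhere defined, symmetric, closable and therefore (by the closed graph theorem) bounded self-adjoint, and relation (c) of Definition \ref{unirep} gives $\pi^\infty(Z)=2\sqrt{-1}\,\rho^\pi(V_1)^2$, so $\pi^\infty(Z)=a\sqrt{-1}\cdot\mathrm{id}$ for some $a\geq 0$. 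This forces us to \emph{define} $\mu:\germ c_0\to\mathbb R$ by $\mu(Z)=a$ (and $\mu$ is determined on all of $\germ c_0=\mathbb R Z$ by linearity); the condition $\mu([V,V])\geq 0$ for $V\in\germ c_1$ is then immediate, since $[V,V]$ is a nonnegative multiple of $Z$ by the defining relations of Clifford type.

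Next I would split into the two cases $a=0$ and $a>0$. If $a=0$, then each symmetric $\rho^\pi(V)$ satisfies $\rho^\pi(V)^2=0$, so $\langle\rho^\pi(V)v,\rho^\pi(V)v\rangle=\langle v,\rho^\pi(V)^2v\rangle=0$ and $\rho^\pi(V)=0$; thus $(\pi,\rho^\pi,\mathcal H)$ is a direct sum of copies of the trivial $(1|0)$-dimensional representation, and irreducibility forces $\mathcal H$ to be one-dimensional and purely even, i.e. $(\pi,\rho^\pi,\mathcal H)\eqsim(\sigma_0,\rho^{\sigma_0},\mathcal K_0)=(\sigma_\mu,\rho^{\sigma_\mu},\mathcal K_\mu)$ with $\mu=0$. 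If $a>0$, I would run the argument already sketched in the text: extend $\rho^\pi$ to an algebra homomorphism $\rho:\mathcal U(\germ c)/\langle Z-a\rangle\to\ennd_\mathbb C(\mathcal H)$, note that $\mathcal U(\germ c)/\langle Z-a\rangle$ is a finite-dimensional complex Clifford algebra, pick a nonzero $v\in\mathcal H_0$, and observe that $\mathcal W=\spn_\mathbb C\{\rho(W)v\mid W\in\mathcal U(\germ c)\}$ is a finite-dimensional, hence closed, $\mathbb Z_2$-graded, $(C_0,\germ c)$-invariant subspace, so by irreducibility $\mathcal W=\mathcal H$ and $\mathcal H$ is finite-dimensional. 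Then the classification of Clifford modules (\cite[Chapter 5]{lawson}, \cite[Lemma 11]{vara}) gives that $\mathcal H$ is, up to parity change, the unique nontrivial irreducible $\mathbb Z_2$-graded module of that Clifford algebra, so $(\pi,\rho^\pi,\mathcal H)\eqsim(\sigma_\mu,\rho^{\sigma_\mu},\mathcal K_\mu)$ with $\mu$ as defined above.

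Finally I would address uniqueness of $\mu$ and the last sentence of the statement. Uniqueness is essentially forced: if $(\pi,\rho^\pi,\mathcal H)\eqsim(\sigma_{\mu'},\rho^{\sigma_{\mu'}},\mathcal K_{\mu'})$ then $\pi^\infty(Z)$ acts by $\mu'(Z)\sqrt{-1}$ on one side and by $a\sqrt{-1}=\mu(Z)\sqrt{-1}$ on the other, and since $\germ c_0=\mathbb R Z$ this pins down $\mu'=\mu$ on all of $\germ c_0$; here one uses that unitary equivalence preserves $\pi^\infty$ on smooth vectors and that parity change does not alter the even-part action. The equivalence "$(\pi,\rho^\pi,\mathcal H)$ trivial $\iff\mu=0$" follows from the case analysis: $\mu=0$ means $a=0$, which we showed forces triviality, and conversely if the representation is trivial then $\rho^\pi(V_1)=0$ so $a=0$ and $\mu=0$.

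The main obstacle — more bookkeeping than genuine difficulty — is making the passage to the finite-dimensional Clifford-module picture fully rigorous in the analytic setting: one must be careful that $\rho^\pi(Z)$ really is scalar (not merely densely defined) before concluding $\mathcal H^\infty=\mathcal H$, that the algebra homomorphism $\rho$ is well-defined on the quotient $\mathcal U(\germ c)/\langle Z-a\rangle$ (which uses $\pi^\infty(Z)=a\sqrt{-1}$ together with relation (c)), and that the abstract irreducible Clifford module carries a compatible unitary structure matching the $\rho^{\sigma_\mu}$ construction — but all of these points are exactly the ones already discharged in the paragraphs preceding the proposition, so the proof is largely a matter of assembling them.
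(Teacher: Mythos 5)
Your proof is correct and follows the paper's own argument essentially verbatim: the whole chain (centre acts by a scalar via \cite[Lemma 5]{vara}, hence $\mathcal H^\infty=\mathcal H$ and each $\rho^\pi(V_i)$ is bounded self-adjoint, the dichotomy $a=0$ versus $a>0$, finite-dimensionality via the cyclic subspace $\mathcal W$, and the Clifford-module classification) is exactly the discussion that Section \ref{unitclifford} carries out in the paragraphs leading up to the proposition. The only pieces you supply beyond what is written there — the explicit uniqueness argument from $\dim\germ c_0=1$ and the two-way check for triviality — are the routine steps the paper leaves implicit, and you handle them correctly.
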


\section{Realization as induced representations}
\label{inductionresult}
\subsection{Codimension-one induction}
\label{inductionresultfive}
Throughout this section $(N_0,\germ n)$ will be a reduced nilpotent 
super Lie group such 
that $\dim\germ n>1$ and $\dim\mathcal Z(\germ n)=1$. 
We assume that $(N_0,\germ n)$ is not
of Clifford type. Hence part (a) of Proposition \ref{kirillovlemma} holds for $\germ n$.
Let $\germ n'=\germ n'_0\oplus\germ n'_1$, $X$, $Y$, and $Z$ be 
as in part (a) of Proposition \ref{kirillovlemma}. 
Let $(N'_0,\germ n')$ be the sub super Lie group of $(N_0,\germ n)$ 
corresponding to $\germ n'$.

Let
$(\pi,\rho^\pi,\mathcal H)$ be an irreducible unitary representation of $(N_0,\germ n)$.
By \cite[Lemma 5]{vara}, there exists a real number $b\in\mathbb R$ 
such that for every $t\in\mathbb R$ and $v\in\mathcal H$, we have 
$$
\pi(\exp(tZ))v=e^{tb\sqrt{-1}}v.
$$
The main goal of this section is to prove the following.
\begin{proposition}
\label{induction-prop}
Suppose that the restriction of $\pi^\infty$ to $\mathcal Z(\germ n)$ 
is nontrivial, i.e.,
$b\neq 0$. Then 
there exists an irreducible unitary representation 
$(\sigma,\rho^\sigma,\mathcal K)$ of $(N'_0,\germ n')$ such that 
$$
\pi\simeq\ind_{(N'_0,\germ n')}^{(N_0,\germ n)}(\sigma,\rho^\sigma,\mathcal K).
$$
\end{proposition}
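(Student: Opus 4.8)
The plan is to mimic the classical Kirillov argument for codimension-one induction, transplanted to the super setting, where the analytic content is supplied by Proposition \ref{varadarajan}. Since $b\neq 0$, rescale so that $\pi(\exp(tZ))v=e^{tb\sqrt{-1}}v$ with $b$ fixed; as in the classical case one checks that $\pi^\infty$ restricted to $\mathbb RX\oplus\mathbb RY\oplus\mathbb RZ$ is (up to the extra odd data) a Heisenberg representation, so that in particular the operators $\pi^\infty(X)$ and $\pi^\infty(Y)$ satisfy $[\pi^\infty(X),\pi^\infty(Y)]=b\sqrt{-1}$ on $\mathcal H^\infty$. First I would set $P=\exp(\mathbb RX)$, a one-parameter subgroup with $N_0=P\ltimes N_0'$ (as a semidirect product of Lie groups, using that $\germ n'$ is an ideal of codimension one and $[\germ n,\germ n]\subseteq\germ n'$ since $Z\in\germ n'$). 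The strategy is: restrict $(\pi,\rho^\pi,\mathcal H)$ to $(N_0',\germ n')$, decompose it, and show the restriction is a multiple of a single irreducible $(\sigma,\rho^\sigma,\mathcal K)$ whose $N_0$-conjugates are all equivalent; then $\ind_{(N_0',\germ n')}^{(N_0,\germ n)}(\sigma,\rho^\sigma,\mathcal K)$ is the unique representation whose restriction to $(N_0',\germ n')$ is that multiple, and an application of the Stone--von Neumann-type uniqueness (via Proposition \ref{varadarajan}(iii)) identifies it with $\pi$.

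Concretely, I would proceed as follows. \emph{Step 1 (building $\mathcal K$).} On $\mathcal H$, the operator $U=\pi(\exp(\mathbb RY))$ together with $\pi^\infty(X)$ generate a Schrödinger pair; use the classical Stone--von Neumann theorem to write $\mathcal H\cong L^2(\mathbb R)\otimes\mathcal K$ as a representation of $N_0$ restricted along the Heisenberg triple, where $\mathcal K$ carries the ``multiplicity''. Equivalently, realize $\mathcal H$ as $L^2(\mathbb R,\mathcal K)$ with $X$ acting by $\partial_t$, $Y$ by multiplication by $b\sqrt{-1}\,t$, and $\germ n'$ acting fiberwise through a representation $\sigma$ of $(N_0',\germ n')$ on $\mathcal K$; the odd operators $\rho^\pi(V)$ for $V\in\germ g_1=\germ n_1\subseteq\germ n_1'$ must then, by property (d) of Definition \ref{unirep} and the relation $\rho^\pi(g\cdot V)=\pi(g)\rho^\pi(V)\pi(g^{-1})$ applied with $g=\exp(tX)$, be given fiberwise by $\bigl(\rho^\pi(V)f\bigr)(t)=\rho^\sigma(\exp(tX)\cdot V)\,(f(t))$, which is exactly the formula (\ref{oddpartaction}) for the induced representation. \emph{Step 2 (irreducibility of $\sigma$).} Show $(\sigma,\rho^\sigma,\mathcal K)$ is irreducible: any $(N_0',\germ n')$-invariant closed $\mathbb Z_2$-graded subspace of $\mathcal K$ would yield, after tensoring with $L^2(\mathbb R)$, a closed $N_0$-invariant subspace commuting with the Heisenberg action, hence an $(N_0,\germ n)$-invariant subspace of $\mathcal H$, contradicting irreducibility of $\pi$ (one must check the odd operators preserve it, which follows from the fiberwise formula). \emph{Step 3 (identification).} Form $\ind_{(N_0',\germ n')}^{(N_0,\germ n)}(\sigma,\rho^\sigma,\mathcal K)$ via Section \ref{realization}; using that $N_0\cong P\ltimes N_0'$ with $P\cong\mathbb R$, the induced space is naturally $L^2(\mathbb R,\mathcal K)$ with the right regular action of $P$ and the fiberwise action of $\germ n'$, and the odd part acts by (\ref{oddpartaction}). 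This matches the realization of $\mathcal H$ from Step 1 on the dense subspace $\mathcal H^{\infty,c}$, so Proposition \ref{varadarajan}(iii) gives the unitary equivalence.

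The main obstacle I anticipate is \emph{Step 1}, specifically making rigorous the passage from the abstract representation $\mathcal H$ to the concrete model $L^2(\mathbb R,\mathcal K)$ while controlling the \emph{unbounded} odd operators $\rho^\pi(V)$ and, crucially, their domains. In the purely even (classical) setting this is standard, but here one must verify that the fiberwise description of $\rho^\pi(V)$ is valid on all of $\mathcal H^\infty$ and not merely on a convenient core: the operators $\rho^\sigma(\exp(tX)\cdot V)$ have $t$-dependent norms (because $\ad_X$ acts nontrivially and nilpotently on $\germ n_1'$, so $\exp(tX)\cdot V$ is polynomial in $t$), so $\rho^\pi(V)$ is genuinely unbounded even when each fiber operator is bounded, and one needs the smooth-vector machinery (the inclusion $\mathcal H^\infty\subseteq C^\infty(N_0,\mathcal K)$ and $f(N_0)\subseteq\mathcal K^\infty$ recalled in Section \ref{realization}) to see that the naive formula extends correctly. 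I would handle this by first establishing everything on $\mathcal H^{\infty,c}$ — where compact support modulo $N_0'$ makes all the operators manifestly well-defined — checking hypotheses (a)--(h) of Proposition \ref{varadarajan} there, and then invoking parts (i)--(iii) of that proposition to extend to $\mathcal H^\infty$ and to conclude uniqueness. A secondary technical point is confirming that $N_0'$ is unimodular (it is, being simply connected nilpotent) so that the unnormalized induction of Section \ref{realization} applies, and that the semidirect decomposition $N_0\cong\mathbb R\ltimes N_0'$ is genuine, which follows from $\germ n'$ being an ideal and $\exp$ being a diffeomorphism.
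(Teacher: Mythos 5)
Your overall strategy is right and matches the paper's: restrict the even part to $N_0'$, use the classical Stone--von Neumann theorem to realize $\mathcal H_i\cong L^2(\mathbb R,\mathcal K_i)$, extract a fiber representation $(\sigma,\rho^\sigma,\mathcal K)$, and conclude via Proposition~\ref{varadarajan}(iii). You also correctly flag that the unboundedness of $\rho^\pi(V)$ is the central analytic difficulty. But there is a genuine gap at the crux: you have not shown that $\rho^\pi(V)$ acts \emph{fiberwise} at all, which is needed even to define $\rho^\sigma$.

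Your appeal to property~(d) of Definition~\ref{unirep} with $g=\exp(tX)$ only determines what the fiber operators would have to be \emph{if} a fiberwise decomposition exists; it does not produce one. The well-definedness of $\rho^\sigma(W)v:=(\rho^\pi(W)f)(0)$ requires showing that $f(0)=0$ implies $(\rho^\pi(W)f)(0)=0$, and this is not ``manifest'' on $\mathcal H^{\infty,c}$ --- it is the content of Lemma~\ref{welldef}, which is the heart of the paper's proof. The correct entry point is property~(d) with $g=\exp(tY)$: since $Y\in\mathcal Z(\germ n')$ and $\germ n_1\subseteq\germ n'$ we have $[Y,V]=0$, so $\rho^\pi(V)$ commutes with the multiplication operators $M_{\chi_a}$. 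Because $\rho^\pi(V)$ is unbounded, commuting with these unitaries does not immediately give decomposability; the paper instead extends the commutation to multiplication by $C_c^\infty(\mathbb R)$ functions via Stone--Weierstrass and Schwartz-function estimates (Lemma~\ref{commuting}), proves a smooth divisibility lemma $f(0)=0\Rightarrow f=tg$ with $g\in\cccc$ (Lemmas~\ref{indirectiont},~\ref{elemanal},~\ref{delicacy}), and combines these to get Lemma~\ref{welldef}. Without that chain, your Steps~2 and~3 cannot begin: there is no $(\sigma,\rho^\sigma,\mathcal K)$ to check hypotheses of Proposition~\ref{varadarajan} against.

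A secondary slip: checking Proposition~\ref{varadarajan}(a)--(h) on $\mathcal H^{\infty,c}$ for $\rho^\pi$ is not the relevant verification --- $(\pi,\rho^\pi,\mathcal H)$ is already a unitary representation. What must be verified against Definition~\ref{unirep} is the newly constructed triple $(\sigma,\rho^\sigma,\mathcal K)$ on $(N_0',\germ n')$; only after that does Proposition~\ref{varadarajan}(iii), applied to the induced representation, identify it with $\pi$. The paper does exactly this, using the fiberwise formula and the fact that the $T_{V,t}$ are odd.
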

The rest of this section is 
devoted to the proof of Proposition \ref{induction-prop}.
The proof is inspired by that of \cite[Proposition 2.3.4]{corgr}, 
but there are
several crucial technical points that our proof deviates from
the argument given in \cite{corgr}. 

Set 
$\germ h=\spn_\mathbb R\{X,Y,Z\}$. 
Clearly, $\germ h$ 
is a Heisenberg Lie subalgebra of $\germ n_0$, corresponding to a 
Heisenberg Lie subgroup $H$ of $N_0$.

Fix an $i\in\{0,1\}$. The space $\mathcal H_i$ is an $N_0$-invariant subspace
of $\mathcal H$, and we will
denote this representation of $N_0$ by $(\pi_i,\mathcal H_i)$.
Let $\mathcal H_i^\infty$ be 
the space of smooth vectors of $(\pi_i,\mathcal H_i)$. 

From the proof of \cite[Proposition 2.3.4]{corgr}
it follows that 
$\pi_i\simeq\ind_{N'_0}^{N_0}\sigma_i$ where $\sigma_i$ is a unitary representation
of $N'_0$. For the reader's convenience, we give an 
outline of the argument.
By the Stone-von Neumann theorem 
in the form stated in \cite[2.2.9]{corgr}, 
there exist
a Hilbert space $\mathcal K_i$ and a linear isometry 
\begin{equation}
\label{isomisomisom}
S_i:\mathcal H_i\to L^2(\mathbb R,\mathcal K_i)
\end{equation}
such that $S_i$ intertwines the action of $H$, where the action of 
$H$ on 
$L^2(\mathbb R,\mathcal K_i)$ is 
given as follows : for every $s,t\in \mathbb R$ and $f\in L^2(\mathbb R,\mathcal K_i)$,
\begin{eqnarray*}
\big(\pi_i(\exp(tX))f\big)\,(s)&=&f(s+t)\\[0.5mm]
\big(\pi_i(\exp(tY))f\big)\,(s)&=&e^{bts\sqrt{-1}}f(s)\\[0.5mm]
\big(\pi_i(\exp(tZ))f\big)\,(s)&=&e^{tb\sqrt{-1}}f(s).\\[-3mm]
\end{eqnarray*}
Lemma 2.3.2 of \cite{corgr}
is still valid, and Lemma 2.3.1 of \cite{corgr} implies that
for every 
$g\in N'_0$, there exists a family $\{T_{g,t}\}_{t\in\mathbb R}$ of unitary
operators from $\mathcal K_i$ to $\mathcal K_i$ such that for every 
$f\in\mathcal H_i^\infty$, $g\in N_0'$, and $t\in\mathbb R$, 
we have
$$
\big(\pi_i(g)f\big)\,(t)=T_{g,t}(f(t)).
$$ 
The rest of the argument, i.e., showing
that the choice of 
\begin{equation}
\label{sigi}
\sigma_i(g)=T_{g,0}
\end{equation}
defines a unitary representation of $N'_0$
on $\mathcal K_i$, and that $\pi_i\simeq\ind_{N'_0}^{N_0}\sigma_i$, follows 
the proof of \cite[Proposition 2.3.4]{corgr} \emph{mutatis mutandis}.


Since $\pi_i\simeq\ind_{N'_0}^{N_0}\sigma_i$, the Hilbert 
space $\mathcal H_i$
can be realized as a space of functions from $N_0$ to $\mathcal K_i$
(see Section \ref{realization}). If we pick this realization of $\mathcal H_i$,
then the isometry $S_i$ of (\ref{isomisomisom}) 
is given by a simple formula which we now describe.
Let 
\begin{equation}
\label{prodductt}
L_0=\{\ \exp(tX)\ |\ t\in\mathbb R\ \}
\end{equation}
be the one-parameter subgroup of $N_0$
corresponding to $X$.
Then 
after normalizing the inner products, the map 
\begin{equation}
\label{sdefn}
S_i:\mathcal H_i\to L^2(\mathbb R,\mathcal K_i)
\end{equation}
is given by 
$$
S_if(t)=f(\exp(tX)).
$$

Let $\mathcal H_i^{\infty,c}$
denote the subspace of $\mathcal H_i^\infty$ consisting of functions
with compact support modulo $N'_0$.
Let $\cccs$ and $\cccc$ be the subspaces of $L^2(\mathbb R,\mathcal K_i)$
defined by
$$
\cccs=S_i\,\mathcal H_i^\infty\textrm{  and  }
\cccc=S_i\,\mathcal H_i^{\infty,c}. 
$$
Let $\mathcal K=\mathcal K_0\oplus\mathcal K_1$ be the orthogonal
direct sum of $\mathcal K_0$ and $\mathcal K_1$ and the isometry
$$
S:\mathcal H^{\infty,c}\to L^2(\mathbb R,\mathcal K)
$$ 
be defined as $S=S_0\oplus S_1$.
We also set
$$
\ccca=S\,\mathcal H^{\infty,c}\textrm{\ \ and\ \ }\cccsi=S\mathcal H^\infty.
$$
As usual, let $C^\infty(\mathbb R)$ denote the space of complex valued
smooth functions on $\mathbb R$, and $C_c^\infty(\mathbb R)$ 
denote the subspace of $C^\infty(\mathbb R)$ consisting of
functions with compact support. 

\begin{lemma}
\label{cpctsmooth}
Assume the above notation.
\begin{itemize}
\item[(a)] If $\phi\in C^\infty(\mathbb R)$ and $f\in\cccc$
then $\phi\,f\in\cccc$. 
\item[(b)] If $\phi\in C_c^\infty(\mathbb R)$ and $f\in\cccs$
then $\phi\,f\in\cccc$.
\end{itemize}
\end{lemma}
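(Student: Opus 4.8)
The plan is to transport the statement to the function realization of $\mathcal H_i$ as functions $N_0\to\mathcal K_i$ and to exploit the description of smooth vectors with support compact modulo $N'_0$. First I would record a coordinate on $N_0$: since $\germ n=\germ n'\oplus\mathbb RX$ as vector spaces and $N_0$ is simply connected nilpotent, the product map $N'_0\times L_0\to N_0$ is a diffeomorphism, so every $n\in N_0$ has a unique expression $n=n'\exp(\tau(n)X)$ with $n'\in N'_0$ and $\tau(n)\in\mathbb R$, and $n\mapsto\tau(n)$ is a smooth function $N_0\to\mathbb R$. By construction $\tau(\exp(tX))=t$ for all $t\in\mathbb R$, and $\tau(hn)=\tau(n)$ for all $h\in N'_0$ and $n\in N_0$, since left translation by $h$ only alters the $N'_0$-component.

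Next, fix $\phi$ and $f$ as in the statement and write $f=S_ig$, where $g\in\mathcal H_i^{\infty,c}$ in case (a) and $g\in\mathcal H_i^\infty$ in case (b); in either case $g$ is a smooth $\mathcal K_i$-valued function on $N_0$ satisfying $g(hn)=\sigma_i(h)g(n)$. I would set $g'=(\phi\circ\tau)\,g$. Then $g'$ is again smooth, and it still satisfies $g'(hn)=\sigma_i(h)g'(n)$ because $\phi\circ\tau$ is left $N'_0$-invariant; moreover $\|g'(n)\|\le\sup_t|\phi(t)|\cdot\|g(n)\|$ in case (b), and in case (a) the analogous pointwise bound holds with the supremum taken over the (compact) image of $\mathrm{Supp}(g)$ in $N'_0\backslash N_0$, so in both cases $g'\in\mathcal H_i$. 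Finally $(S_ig')(t)=\phi(\tau(\exp(tX)))\,g(\exp(tX))=\phi(t)f(t)$, so it only remains to verify $g'\in\mathcal H_i^{\infty,c}$, which will give $\phi f=S_ig'\in\cccc$.

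For that last point I would invoke \cite[Proposition 4]{vara}, by which any element of $\mathcal H_i$ that is smooth as a function on $N_0$ and has support compact modulo $N'_0$ already lies in $\mathcal H_i^{\infty,c}$. In case (a), $\mathrm{Supp}(g')\subseteq\mathrm{Supp}(g)$ is compact modulo $N'_0$ by hypothesis on $g$. In case (b), $g'(n)=0$ whenever $\tau(n)\notin\mathrm{Supp}(\phi)$, and the image of $\tau^{-1}(\mathrm{Supp}(\phi))$ in $N'_0\backslash N_0\cong\mathbb R$ is exactly the compact set $\mathrm{Supp}(\phi)$, so $\mathrm{Supp}(g')$ is compact modulo $N'_0$. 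In both cases $g'\in\mathcal H_i^{\infty,c}$, completing the argument.

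The only genuinely delicate point is that the purely algebraic operation of multiplying by $\phi\circ\tau$ actually produces a smooth vector, and not merely a smooth square-integrable section of the induced bundle; this is precisely what the cited characterization of $\mathcal H_i^{\infty,c}$ guarantees, so once $\tau$ and its invariance properties are in place the rest is routine verification. (A less hands-on alternative would be to realize multiplication by functions of $t$ by integrating the operators $\pi_i(\exp(tY))$ against a Schwartz function and invoking the Fourier transform; but that reaches only $\phi$ lying in the range of the Fourier transform, and would need a further density argument to handle an arbitrary $\phi\in C^\infty(\mathbb R)$, so the direct construction above is preferable.)
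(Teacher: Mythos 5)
Your proof is correct and takes essentially the same route as the paper: you introduce the smooth left‑$N'_0$‑invariant coordinate $\tau$ on $N_0$ (the paper writes the multiplier as $\psi(n)=\phi(l)$ for $n=n'l$, which is exactly your $\phi\circ\tau$), multiply the realization of $f$ as a function on $N_0$ by this multiplier, verify the covariance, square‑integrability, and compact‑support‑modulo‑$N'_0$ conditions, and then conclude smoothness as a vector from the characterization of smooth vectors for induced representations. The only cosmetic difference is the citation at the final step — the paper invokes Poulsen's Theorem 5.1 (or Corwin--Greenleaf Theorem A.1.4) directly, whereas you invoke \cite[Proposition 4]{vara}, which the paper itself had earlier established as a consequence of the same results — so the underlying argument is identical.
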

\begin{proof}
Let $L_0$ be defined as in (\ref{prodductt}).
Every element $n\in N_0$ can be written uniquely as
a product  
$n=n'\cdot l$ 
of an element
$n'\in N'_0$ and an element $l\in L_0$. 
Consider the function $\psi:N_0\to \mathbb C$ 
defined by $\psi(n)=\phi(l)$.
It is easily seen that
$\psi$ is smooth. To prove part (a) it suffices to show 
that if $h\in\mathcal H_i^{\infty,c}$, then $\psi\, h\in\mathcal H_i^{\infty,c}$,
and the latter inclusion follows from the description
of smooth vectors for induced
representations in
\cite[Theorem 5.1]{poulsen} or \cite[Theorem A.1.4]{corgr}. 
The proof of part (b) is similar.

\end{proof}

\begin{lemma}
\label{commuting}
Let $V\in\germ n_1$. If $\phi\in C^\infty_c(\mathbb R)$ and
$f\in\cccsi$ then
$$
\rho^\pi(V)(\phi f)=\phi\rho^\pi(V) f.
$$ 
\end{lemma}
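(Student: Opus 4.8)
The plan is to reduce the statement to the commutation of $\rho^\pi(V)$ with multiplication by $\phi$ on the smaller space $\ccca = S\mathcal H^{\infty,c}$, and then invoke the closure/density machinery of Proposition \ref{varadarajan} to pass from $\ccca$ to $\cccsi$. First I would observe that, since $\phi\in C_c^\infty(\mathbb R)$, Lemma \ref{cpctsmooth}(b) guarantees that $\phi f\in\ccca$ whenever $f\in\cccsi$, so both sides of the claimed identity make sense (the right-hand side because $\rho^\pi(V)$ is defined on all of $\mathcal H^\infty\supseteq\mathcal H^{\infty,c}$, the left-hand side because $\phi f\in\ccca\subseteq\mathcal H^{\infty,c}$). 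So the real content is an equality of operators, and it suffices to check it first on the dense invariant subspace $\mathcal H^{\infty,c}$.

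On $\mathcal H^{\infty,c}$ the action of $\rho^\pi(V)$ is given by the pointwise formula (\ref{oddpartaction}): translating through the isometry $S$, for $h\in\mathcal H_i^{\infty,c}$ realized as a function on $N_0$ we have $\bigl(\rho^\pi(V)h\bigr)(n) = \rho^\sigma(n\cdot V)\bigl(h(n)\bigr)$, where $\rho^\sigma$ is built from the family $\{T_{g,t}\}$ as in (\ref{sigi}). The key point is that multiplication by $\phi$ here means multiplication by the function $\psi$ on $N_0$ defined (as in the proof of Lemma \ref{cpctsmooth}) by $\psi(n'\cdot l)=\phi(l)$ for $n'\in N_0'$, $l\in L_0$ — i.e.\ $\psi$ is a scalar-valued function that is constant along $N_0'$-cosets. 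Since $\rho^\sigma(n\cdot V)$ acts only on the value $h(n)\in\mathcal K_i$ and commutes with scalar multiplication, we get pointwise
$$
\bigl(\rho^\pi(V)(\psi h)\bigr)(n)=\rho^\sigma(n\cdot V)\bigl(\psi(n)h(n)\bigr)=\psi(n)\,\rho^\sigma(n\cdot V)\bigl(h(n)\bigr)=\bigl(\psi\,\rho^\pi(V)h\bigr)(n),
$$
which is exactly $\rho^\pi(V)(\phi f)=\phi\,\rho^\pi(V)f$ after applying $S$. This establishes the identity on $\ccca=S\mathcal H^{\infty,c}$.

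To upgrade from $\ccca$ to $\cccsi=S\mathcal H^\infty$, I would argue as follows. Fix $f\in\cccsi$ and take a sequence $f_k\in\ccca$ with $f_k\to f$ and $\rho^\pi(V)f_k\to\rho^\pi(V)f$ in $\mathcal H$ (possible since $\mathcal H^{\infty,c}$ is a core for $\overline{\rho^\pi(V)}$ by Proposition \ref{varadarajan}(i), after transporting via $S$, and by the fact recalled in Section \ref{realization} that $\rho^\pi$ on $\mathcal H^\infty$ is the closure of its restriction to $\mathcal H^{\infty,c}$). Multiplication by $\phi$ is bounded on $L^2(\mathbb R,\mathcal K)$, so $\phi f_k\to\phi f$ and $\phi\,\rho^\pi(V)f_k\to\phi\,\rho^\pi(V)f$. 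By the case already proved, $\rho^\pi(V)(\phi f_k)=\phi\,\rho^\pi(V)f_k$; since $\rho^\pi(V)$ (i.e.\ $\overline{\rho(V)}$) is a closed operator and $\phi f_k\to\phi f$ while $\rho^\pi(V)(\phi f_k)$ converges, closedness forces $\phi f\in D(\overline{\rho^\pi(V)})$ — consistent with $\phi f\in\mathcal H^{\infty,c}$ — and $\rho^\pi(V)(\phi f)=\phi\,\rho^\pi(V)f$, as desired. The main obstacle I anticipate is bookkeeping the identification: making sure that under $S_i f(t)=f(\exp(tX))$ the operator ``multiplication by $\phi(t)$ on $L^2(\mathbb R,\mathcal K_i)$'' corresponds precisely to ``multiplication by $\psi$ on functions $N_0\to\mathcal K_i$'', i.e.\ that $\psi(\exp(tX))=\phi(t)$, which is immediate from $\exp(tX)\in L_0$ and $\psi|_{L_0}=\phi$; and checking that $\rho^\sigma(n\cdot V)$ genuinely commutes with the scalar $\psi(n)$, which is clear since $\rho^\sigma(n\cdot V)\in\ennd_{\mathbb C}(\mathcal K_i^\infty)$ is $\mathbb C$-linear. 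Everything else is the standard closed-operator limiting argument, so no serious difficulty is expected beyond careful verification that the approximating sequence can be taken inside $\ccca$ with control on $\rho^\pi(V)$.
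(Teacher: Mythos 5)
Your argument is circular, and the circularity is not a small bookkeeping issue. At the point in Section \ref{inductionresultfive} where Lemma \ref{commuting} is stated, $(\pi,\rho^\pi,\mathcal H)$ is an \emph{arbitrary} irreducible unitary representation of $(N_0,\germ n)$ with nontrivial central character. The only structure that has been established is the Stone-von~Neumann realization of the restriction $\pi_i$ of the \emph{even} part $\pi$ to $\mathcal H_i$, via the isometry $S_i:\mathcal H_i\to L^2(\mathbb R,\mathcal K_i)$. Nothing has been said about how $\rho^\pi(V)$ acts in these coordinates. The pointwise formula (\ref{oddpartaction}), the operators $T_{V,t}$, and $\rho^\sigma$ are all constructed \emph{after} Lemma \ref{commuting} and Lemma \ref{welldef}: one needs Lemma \ref{commuting} to prove Lemma \ref{welldef}, which is in turn what shows that the operators $T_{V,t}$ (and hence $\rho^\sigma$) are well-defined. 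So when you write that on $\mathcal H^{\infty,c}$ the action of $\rho^\pi(V)$ is ``given by the pointwise formula (\ref{oddpartaction})'' with $\rho^\sigma$ ``built from the family $\{T_{g,t}\}$ as in (\ref{sigi})'', you are assuming exactly the conclusion that this whole chain of lemmas is designed to reach. The base case of your induction (the identity on $\ccca$) is therefore unproved, and without it the closed-operator limiting step at the end has nothing to extend.

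The paper's actual proof uses only the ``black-box'' properties of $\rho^\pi(V)$ available at this stage. The key observation is that $[Y,V]=0$, so by Definition \ref{unirep}(d) the operator $\rho^\pi(V)$ commutes with $\pi(\exp(sY))$, and under $S$ this is exactly multiplication by the character $\chi_a(t)=e^{at\sqrt{-1}}$. Thus $\rho^\pi(V)$ commutes with every $M_{\chi_a}$, hence with every trigonometric polynomial. One then approximates $\phi$ on a large interval $[-n,n]$ by periodic trigonometric polynomials $\phi_n$ (Stone--Weierstrass), controls the tail using the fact that elements of $\cccsi$ are Schwartz (being smooth vectors for the Heisenberg subgroup $H$), and concludes by closability of the symmetric operator $\rho^\pi(V)$. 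Note that this Schwartz-decay ingredient is essential for the approximation to converge in $L^2$ and has no analogue in your proposal; it is also worth observing that the same tail estimate must be applied to $\rho^\pi(V)f$ as well as to $f$, which works because $\rho^\pi(V)f\in\cccsi$. The core fact your proof is missing, then, is precisely that one can bypass any pointwise description of $\rho^\pi(V)$ by exploiting the commutation with the central direction $Y$; the paper proves the lemma \emph{before} any pointwise structure is available, whereas your proof would only be valid once Proposition \ref{induction-prop} itself has already been established.
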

\begin{proof}
Let $M_{\chi_a}:L^2(\mathbb R,\mathcal K)\to L^2(\mathbb R,\mathcal K)$ 
be the operator
of multiplication by
$\chi_a$, i.e., 
$$
\textrm{for every\ }h\in L^2(\mathbb R,\mathcal K)\textrm{ and }
t\in\mathbb R,\ \ (M_{\chi_a}h)\,(t)=\chi_a(t)h(t),
$$ 
where $\chi_a(t)=e^{at\sqrt{-1}}$. By Lemma \ref{cpctsmooth},
for every $i\in\{0,1\}$ we have
$$
M_{\chi_a}(\cccc)\subseteq \cccc.
$$
From $[Y,V]=0$ it follows that for every $a\in\mathbb R$,
\begin{equation}
\label{starstar}
\rho^\pi(V)M_{\chi_a}=M_{\chi_a}\rho^\pi(V).
\end{equation} 

Choose a sequence $\{\phi_n\}_{n=1}^\infty$ of elements of 
$\spn_\mathbb C\{\,\chi_a\,|\,a\in\mathbb R\,\}$ such that 
\begin{equation}
\label{unifconv}
\lim_{n\to\infty}\phi_nf=\phi f
\textrm{\ \ and\ \ }
\lim_{n\to\infty}\phi_n\rho^\pi(V)f=\phi\rho^\pi(V)f,
\end{equation}
where the convergences are in $L^2(\mathbb R,\mathcal K)$.
The sequence $\{\phi_n\}_{n=1}^{\infty}$ can be found as follows. 
By the Stone-Weierstrass theorem, for every positive integer 
$n$ one can choose
a function 
$$
\phi_n\in\spn_\mathbb C\{\ \chi_a\ |\ a\in\mathbb R\ \}
$$ 
which is periodic with period 
$2n$, and
$$
\max_{-n\leq t\leq n}\{\,|\phi(t)-\phi_n(t)|\,\}\leq{1\over n}.
$$
Since elements of $\cccsi$ are smooth vectors for the action
of the Heisenberg group $H$, they are Schwartz functions from $\mathbb R$
to $\mathcal K$. Suppose $n$ is large enough such that 
$$
\mathrm{Supp}(\phi)\subset\{x\in\mathbb R\,|\,-n\leq x\leq n\}.
$$
Now we have
\begin{eqnarray*}
||(\phi-\phi_n)f||^2&\leq&\int_{-n}^n||(\phi_n(t)-\phi(t))f(t)||^2dt+\int_{|t|>n}||(\phi_n(t)-\phi(t))f(t)||^2dt\\
&\leq&{1\over n^2}\times 2n\times \max_{|t|\leq n}\{||f(t)||^2\}
+
({1\over n}+\max_{t\in\mathbb R}\{|\phi(t)|\})^2
\int_{|t|>n}||f(t)||^2dt.
\end{eqnarray*}
Since $f\in\cccsi$, when $n$ grows to infinity the last line above converges to zero.
Since $\rho^\pi(V)f\in\cccsi$,
the same reasoning applies to $\rho^\pi(V)f$ instead of $f$ as well.

Since 
$\phi_n\in\spn_\mathbb C\{\,\chi_a\,|\,a\in\mathbb R\,\}$, it follows 
from (\ref{starstar}) that 
$$
\rho^\pi(V)\phi_n f=\phi_n\rho^\pi(V) f.
$$ 
The operator $\rho^\pi(V)$ is symmetric, 
hence it is closable (see \cite[p. 316]{conway}). In particular,
from (\ref{unifconv})  
and the fact that $\phi f\in D(\rho^\pi(V))$ it follows that
$$
\rho^\pi(V)(\phi\, f)=\phi\,\rho^\pi(V)f.
$$

\end{proof}
Consider the map $\Psi:N'_0\times \mathbb R\to N_0$ defined as
\begin{equation}
\label{defpsi}
\Psi(n',s)=n'\cdot\exp(sX).
\end{equation} 
The map $\Psi$ is bijective and the Campbell-Baker-Hausdorff
formula implies that it is smooth. Moreover, for every 
$\mathbf x=(n',s)\in N'_0\times \mathbb R$, if by means of the left action of $N_0$
we identify the tangent
spaces at $\mathbf x$ and $\Psi(\mathbf x)$ with 
$\germ n'_0\oplus \mathbb R$ and
$\germ n_0$, then the derivative 
$$
\mathbf D\Psi(\mathbf x):\germ n'_0\oplus \mathbb R\to \germ n_0
$$ of $\Psi$ at $\mathbf x$ 
is given by the following formula.
$$
\textrm{For every } (Q,t)\in\germ n_0'\oplus\mathbb R,\ \ 
\mathbf D\Psi(\mathbf x)(Q,t)=(\exp(-sX)\cdot Q,t).
$$
Note that $\exp(-sX)\cdot Q\in\germ n'_0$ because 
$N'_0$ is a normal 
subgroup of $N_0$ (see \cite[Lemma 1.1.8]{corgr}).

From the formula for $\mathbf D\Psi$ it follows 
immediately 
that $\mathbf D\Psi(\mathbf x)$ 
is invertible at every $\mathbf x\in N_0'\times\mathbb R$.  
Hence the inverse mapping
theorem implies that $\Psi^{-1}$ is smooth. 
Consequently, 
a function $f:N_0\to \mathcal K_i$ is smooth if and only if
$f\circ\Psi:N'_0\times\mathbb R\to\mathcal K_i$ is smooth. 

\begin{lemma}
\label{indirectiont}
Let $f:\mathbb R\to \mathcal K_i$ be a smooth function
such that $f(0)=0$. 
Set 
$$
g(t)=\left\{
\begin{array}{ll}
{f(t)\over t}\ \ \ &\textrm{if\ }t\neq 0,\\[3mm]
f'(0)&\textrm{otherwise.}
\end{array}
\right.
$$
Then $g$ is a smooth function as well, and 
$g^{(n)}(0)={f^{(n+1)}(0)\over n+1}$.
\end{lemma}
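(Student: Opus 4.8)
The plan is to reduce everything to the scalar ($\mathcal K_i = \mathbb C$) case and then to a standard application of Taylor's theorem with integral remainder. First I would observe that the statement is really about the vector-valued function $f:\mathbb R\to\mathcal K_i$ only through its behaviour near $0$, and that smoothness is a local and pointwise-in-the-target property, so one may as well pair against a fixed vector of $\mathcal K_i$ or simply treat $\mathcal K_i$ as a Banach space and run the classical argument verbatim. The key identity is the integral representation
\begin{equation*}
f(t) = f(t) - f(0) = t\int_0^1 f'(ut)\,du,
\end{equation*}
valid since $f(0)=0$, which immediately gives $g(t) = \int_0^1 f'(ut)\,du$ for all $t$, including $t=0$ where it reads $g(0)=f'(0)$, matching the definition.

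Next I would note that the right-hand side $\int_0^1 f'(ut)\,du$ is manifestly a smooth function of $t$: differentiation under the integral sign is justified because $f'$ is smooth and on any compact $t$-interval all the relevant derivatives $(u,t)\mapsto u^n f^{(n+1)}(ut)$ are continuous, hence bounded, on the compact parameter domain $[0,1]\times[-R,R]$. This yields
\begin{equation*}
g^{(n)}(t) = \int_0^1 u^n f^{(n+1)}(ut)\,du,
\end{equation*}
and setting $t=0$ gives $g^{(n)}(0) = f^{(n+1)}(0)\int_0^1 u^n\,du = \dfrac{f^{(n+1)}(0)}{n+1}$, which is exactly the claimed formula. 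So the smoothness of $g$ and the derivative formula fall out of the same computation.

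I do not expect any serious obstacle here; this is a routine lemma. The only point requiring a word of care is the passage to the vector-valued setting: one should either invoke that $\mathcal K_i$-valued smoothness can be checked weakly (against a dense set of functionals, using that $\mathcal K_i$ is a Hilbert space and $f$ is already assumed smooth as a $\mathcal K_i$-valued map) or, more cleanly, just carry out the Riemann-integral and differentiation-under-the-integral-sign arguments directly for Banach-space-valued functions, where they hold with identical proofs. Either way, the substance is the identity $g(t)=\int_0^1 f'(ut)\,du$ together with differentiation under the integral sign, and the rest is bookkeeping.
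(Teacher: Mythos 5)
Your proposal is correct, and it takes a genuinely different (and cleaner) route from the paper's. You prove the lemma via the Hadamard-style integral representation: since $f(0)=0$, the fundamental theorem of calculus and the substitution $s=ut$ give
\begin{equation*}
f(t)=\int_0^t f'(s)\,ds = t\int_0^1 f'(ut)\,du,
\end{equation*}
hence $g(t)=\int_0^1 f'(ut)\,du$ uniformly in $t$, and smoothness plus the formula $g^{(n)}(0)=\int_0^1 u^n f^{(n+1)}(0)\,du = f^{(n+1)}(0)/(n+1)$ both drop out of a single differentiation under the integral sign. The paper instead proceeds by induction on $n$: for $t\ne 0$ it expands $g^{(n)}(t)$ explicitly via the Leibniz rule applied to $t^{-1}f(t)$, and then applies a L'H\^opital argument to the difference quotient $\big(g^{(n)}(t)-g^{(n)}(0)\big)/t$ to establish existence and the value of $g^{(n+1)}(0)$. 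Your approach is shorter, avoids the somewhat delicate L'H\^opital bookkeeping, and — importantly — extends with no additional effort to the multivariable version that the paper needs next (Lemma \ref{elemanal}): one simply writes $g(x,t)=\int_0^1 \partial_t f(x,ut)\,du$ and differentiates under the integral sign in all variables, whereas the paper has to run a separate, more involved argument for that lemma. The paper's inductive approach buys only that it avoids invoking the vector-valued Riemann/Bochner integral and differentiation under the integral sign, but as you note these are entirely routine for continuous maps into a Hilbert (or Banach) space, so nothing of substance is at stake. Your concluding remark about checking smoothness weakly is unnecessary given that $f$ is already assumed smooth as a $\mathcal K_i$-valued map, but it does no harm.
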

\begin{proof}
For $t\neq 0$, the lemma is trivial.
We prove the Lemma for $t=0$ by induction on $n$, in each step 
proving
that $g^{(n)}(0)={f^{(n+1)}(0)\over n+1}$.
For $n=0$ the latter statement is 
obvious. We will assume that the statement is true 
for some $n$, 
and prove it for $n+1$. 
If $t\neq 0$, then 
$$
g^{(n)}(t)=\sum_{k=0}^n\Big({n\atop k}\Big)(-1)^{n-k}(n-k)!t^{-n+k-1}f^{(k)}(t).
$$
Therefore

\begin{eqnarray*}
\lim_{t\to 0}{g^{(n)}(t)-g^{(n)}(0)\over t}&=&
\lim_{t\to 0}
{\displaystyle\Big(\sum_{k=0}^n\Big({n\atop k}\Big)(-1)^{n-k}(n-k)!\,t^k
f^{(k)}(t)
\Big) - 
{t^{n+1}f^{(n+1)}(0)\over n+1}
\over t^{n+2}}
\end{eqnarray*}
which is a limit of the form 
$$
\lim_{t\to 0}{h_1(t)\over h_2(t)}
$$
where $h_1$ and $h_2$ are continuously
differentiable functions and $h_1(0)=h_2(0)=0$.
It follows that the above limit is equal to
$$
\lim_{t\to 0}{h'_1(t)\over h'_2(t)}
$$  
in case the latter limit exists. 
But we have

\begin{eqnarray*}
h'_1(t)&=&
n!\sum_{k=0}^n(-1)^{n-k}f^{(k+1)}(t){t^k\over k!}\\
&+&\Big(n!\sum_{k=1}^n(-1)^{n-k}f^{(k)}(t){t^{k-1}\over (k-1)!}\Big)
-t^nf^{(n+1)}(0)\\
&=&t^nf^{(n+1)}(t)-t^nf^{(n+1)}(0)
\end{eqnarray*}
while $h'_2(t)=(n+2)t^{n+1}$.
In conclusion, we have
\begin{eqnarray*}
\lim_{t\to 0}{h'_1(t)\over h'_2(t)}=\lim_{t\to 0}{(t^nf^{(n+1)}(t)-t^nf^{(n+1)}(0))
\over (n+2)t^{n+1}}=
{f^{(n+2)}(0)\over n+2}
\end{eqnarray*}
which implies that
$g^{(n+1)}(0)={f^{(n+2)}(0)\over n+2}$.
\end{proof}
\begin{lemma}
\label{elemanal}
Let $q$ be a positive integer and $f:\mathbb R^q\times\mathbb R\to\mathcal K_i$ be a smooth
function such that  
for every $x\in\mathbb R^q$, we have $f(x,0)=0$. 
Define
$$
g(x,t)=
\left\{
\begin{array}{ll}
{f(x,t)\over t}&\textrm{if }t\neq 0,\\[3mm]
{\partial f\over\partial t}(x,0)&\textrm{otherwise.}
\end{array}
\right.
$$
Then $g(x,t)$ is smooth, and indeed 
\begin{equation}
\label{egalite}
{\partial^ng\over\partial t^n}(x,0)=
{\displaystyle{\partial^{n+1}f\over\partial t^{n+1}}(x,0)\over n+1}.
\end{equation}
\end{lemma}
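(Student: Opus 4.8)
The plan is to bootstrap from the one-variable statement (Lemma \ref{indirectiont}) by treating $x\in\mathbb R^q$ as a parameter, and to extract smoothness in all variables jointly from an integral representation of $g$. First I would write, for each fixed $x$, the Hadamard-type formula
\[
g(x,t)=\int_0^1\frac{\partial f}{\partial t}(x,st)\,ds,
\]
which is valid for all $t$ (including $t=0$, where it gives $\frac{\partial f}{\partial t}(x,0)$) precisely because $f(x,0)=0$, by the fundamental theorem of calculus applied to $s\mapsto f(x,st)$. Since $\frac{\partial f}{\partial t}$ is smooth on $\mathbb R^q\times\mathbb R$ and hence on the compact-in-$s$ region, differentiation under the integral sign shows that $g$ is smooth jointly in $(x,t)$: every partial derivative $\partial_x^\alpha\partial_t^n g$ is obtained by differentiating the integrand, which is continuous in $(x,t,s)$.

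Next I would establish the explicit formula (\ref{egalite}) for the $t$-derivatives at $t=0$. Differentiating the integral representation $n$ times in $t$ and setting $t=0$ gives
\[
\frac{\partial^n g}{\partial t^n}(x,0)=\int_0^1 s^n\,\frac{\partial^{n+1}f}{\partial t^{n+1}}(x,0)\,ds
=\frac{1}{n+1}\,\frac{\partial^{n+1}f}{\partial t^{n+1}}(x,0),
\]
since $\int_0^1 s^n\,ds=\frac1{n+1}$. Alternatively, one can simply invoke Lemma \ref{indirectiont} applied to the one-variable function $t\mapsto f(x,t)$ for each fixed $x$, which already yields $g^{(n)}(x,0)=\frac{f^{(n+1)}(x,0)}{n+1}$ in the $t$-variable; the only thing that lemma does not give by itself is \emph{joint} smoothness, which the integral representation supplies.

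The main obstacle is the joint smoothness at points with $t=0$: the naive quotient $f(x,t)/t$ is visibly smooth away from the zero section, and Lemma \ref{indirectiont} handles smoothness along each line $\{x=\text{const}\}$, but neither of these immediately controls mixed derivatives in $x$ and $t$ across $t=0$. The Hadamard integral formula is exactly the device that removes this difficulty, since it presents $g$ globally as a smooth function with no case distinction, and reduces everything to differentiation under an integral sign over the compact interval $[0,1]$. Once that representation is in hand, both the smoothness claim and the derivative identity (\ref{egalite}) are routine.
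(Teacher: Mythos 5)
Your proof is correct, and it takes a genuinely different and cleaner route than the paper's. The paper's argument first invokes the one-variable Lemma~\ref{indirectiont} to get the $t$-derivatives at $t=0$, and then establishes joint smoothness by a fairly laborious induction: it writes a general differential operator as an alternating word in $\partial/\partial x_i$ and $\partial/\partial t$, and shows step by step that each such mixed partial of $g$ exists, reducing at each stage to a new quotient $g_1,g_2,\dots$ of the form $(\partial_x^\alpha f)(x,t)/t$ to which Lemma~\ref{indirectiont} again applies. Your Hadamard-type representation
\[
g(x,t)=\int_0^1\frac{\partial f}{\partial t}(x,st)\,ds
\]
bypasses all of that: it presents $g$ with no case distinction, so that joint smoothness and the formula~(\ref{egalite}) both drop out at once from differentiation under the integral sign (with $\int_0^1 s^n\,ds=1/(n+1)$). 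This is legitimate for the $\mathcal K_i$-valued setting since $\mathcal K_i$ is a Hilbert space, the integral is over a compact interval, and $\partial f/\partial t$ is smooth, so the usual Leibniz rule for Banach-space-valued integrals applies. Your observation that Lemma~\ref{indirectiont} alone only controls the $t$-direction at fixed $x$, and that the missing ingredient is joint smoothness across $t=0$, correctly identifies the real content of the lemma; the integral formula supplies exactly that, whereas the paper works harder to achieve the same end.
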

\begin{proof}
That $g(x,t)$ is smooth when $t\neq 0$ is trivial.
From Lemma \ref{indirectiont} it follows that for every integer
$n\geq 0$ and every $x\in\mathbb R^q$, 
${\partial^ng\over\partial t^n}(x,0)$ exists and 
equality (\ref{egalite}) holds. 

Every differential operator in $x_1,...,x_q,t$
is a linear combination of operators $\D$ of the form  
$$
\D={\partial^{a_1}\over \partial x_{i_1}^{a_1}}
{\partial^{b_1}\over \partial t^{b_1}}
{\partial^{a_2}\over\partial x_{i_2}^{a_2}}
{\partial^{b_2}\over \partial t^{b_2}}
\cdots
{\partial^{a_k}\over\partial x_{i_k}^{a_k}}
{\partial^{b_k}\over \partial t^{b_k}}
$$
where $i_1,...,i_k\in\{1,...,q\}$ and $a_1,...,a_k,b_1,...,b_k\in\{0,1\}$.
(For example, if $k=3$, $a_1=a_3=1$, $a_2=0$, $b_1=b_2=1$, $b_3=0$, $i_1=3$,
$i_3=2$, and $1\leq i_2\leq q$ 
then $\D={\partial\over\partial x_3}{\partial^2\over\partial t^2}
{\partial\over\partial x_2}$.) In order to complete the proof of  
the lemma, it suffices
to show that for every such $\D$ and every $x\in\mathbb R^q$,
the partial derivative $\D g(x,0)$ exists.

For every $1\leq i\leq q$ and $n\geq 0$ we have 
$$
{\partial\over \partial x_i}{\partial^n g\over\partial t^n}(x,t)=
\left\{
\begin{array}{ll}
{\partial^n\over \partial t^n}\big(
{1\over t}{\partial f\over \partial x_i}
\big)(x,t)
&\textrm{if }t\neq 0,\\[3mm]
{1\over n+1}
\big(
{\partial^{n+1}\over \partial t^{n+1}}
{\partial f\over \partial x_i}
\big)(x,0)
&\textrm{otherwise.}
\end{array}
\right.
$$
Thus for every $(x,t)\in \mathbb R^q\times \mathbb R$ 
the partial derivative 
${\partial\over\partial x_i}{\partial^ng\over\partial t^n}(x,t)$ exists and
if we set
$$
g_1(x,t)=
\left\{
\begin{array}{ll}
{{\partial f\over \partial x_i}(x,t)\over t}
&\textrm{if }t\neq 0,\\[3mm]
{\partial\over\partial t}{\partial f\over \partial x_i}(x,0)
&\textrm{otherwise}
\end{array}
\right.
$$
then we have 
$$
{\partial\over\partial x_i}{\partial^ng\over\partial t^n}(x,t)=
{\partial^ng_1\over\partial t^n}(x,t).
$$
\vspace{1mm}

\noindent Note that Lemma \ref{indirectiont} implies that
${\partial^ng_1\over\partial t^n}(x,t)$ exists for every $n\geq 0$. 

By repeating the above argument one can show that
$\D g(x,t)$ exists and is equal to 
$$
{\partial^{b_1+\cdots b_k}\over\partial t^{b_1+\cdots+b_k}}g_2(x,t)
$$ 
where
$$
g_2(x,t)=
\left\{
\begin{array}{ll}
{\displaystyle 1\over\displaystyle t}
\Big(
{\displaystyle
\partial^{a_1+\cdots+a_k} 
\over 
\displaystyle
\partial x_{i_1}^{a_1}\partial x_{i_2}^{a_2}\cdots
\partial x_{i_k}^{a_k}
}
f(x,t)
\Big)
&\textrm{if }t\neq 0,\\[6mm]
{\displaystyle\partial
\over
\displaystyle\partial t}
\Big(
{\displaystyle
\partial^{a_1+\cdots+a_k} \over 
\displaystyle
\partial x_{i_1}^{a_1}\partial x_{i_2}^{a_2}\cdots
\partial x_{i_k}^{a_k}
}f\Big)(x,0)
&\textrm{otherwise.}
\end{array}
\right.
$$
The existence of 
${\partial^{b_1+\cdots b_k}\over\partial t^{b_1+\cdots+b_k}}g_2(x,t)$
follows from Lemma \ref{indirectiont}.

\end{proof}

\begin{lemma}
\label{delicacy}
Let $f\in\cccc$ satisfy $f(0)=0$. Then there exists a function 
$g\in\cccc$
such that for every $t\in\mathbb R$ we have $f(t)=tg(t)$.
\end{lemma}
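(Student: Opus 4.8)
The plan is to unwind the definition $\cccc=S_i\mathcal H_i^{\infty,c}$ of Section~\ref{inductionresultfive}, to divide $f$ by the coordinate $t$ to obtain the obvious candidate $g$, and then to show that $g$ again lies in $\cccc$ by producing the corresponding smooth, compactly supported vector for $\pi_i$ on $N_0$. First I would set $g(t)=f(t)/t$ for $t\neq 0$ and $g(0)=f'(0)$; by Lemma~\ref{indirectiont} the function $g:\mathbb R\to\mathcal K_i$ is smooth, and the identity $f(t)=tg(t)$ holds by construction. It remains to prove that $g\in\cccc$.

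Write $f=S_ih$ with $h\in\mathcal H_i^{\infty,c}$, so that $h:N_0\to\mathcal K_i$ is smooth, satisfies $h(n'n)=\sigma_i(n')h(n)$ for $n'\in N_0'$, has support compact modulo $N_0'$, and $h(\exp(tX))=f(t)$. Using the diffeomorphism $\Psi(n',s)=n'\exp(sX)$ of (\ref{defpsi}) and the fact, recalled in Section~\ref{inductionresultfive}, that a function on $N_0$ is smooth precisely when its composition with $\Psi$ is, equivariance of $h$ gives that $(n',s)\mapsto h\big(\Psi(n',s)\big)=\sigma_i(n')f(s)$ is a smooth $\mathcal K_i$-valued function on $N_0'\times\mathbb R$ vanishing on $N_0'\times\{0\}$, since $f(0)=0$. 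Now fix a global exponential chart $N_0'\cong\mathbb R^q$, which is legitimate because $N_0'$ is a connected, simply connected nilpotent Lie group, and apply Lemma~\ref{elemanal} to this function: the map $G$ equal to $\sigma_i(n')f(s)/s$ for $s\neq 0$ and to its $s$-derivative at $s=0$ is smooth on $N_0'\times\mathbb R$. For $s\neq 0$ one has $G(n',s)=\sigma_i(n')g(s)$, while differentiating $s\mapsto\sigma_i(n')f(s)$ — the bounded operator $\sigma_i(n')$ applied to a smooth curve — gives $G(n',0)=\sigma_i(n')f'(0)=\sigma_i(n')g(0)$; hence $G(n',s)=\sigma_i(n')g(s)$ throughout. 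Define $\tilde h:N_0\to\mathcal K_i$ by $\tilde h\big(\Psi(n',s)\big)=\sigma_i(n')g(s)$; then $\tilde h\circ\Psi=G$ is smooth, so $\tilde h$ is smooth, and $\tilde h(n_1'n)=\sigma_i(n_1')\tilde h(n)$ is immediate from the formula, so $\tilde h$ satisfies the equivariance condition of the realization of $\pi_i\simeq\ind_{N_0'}^{N_0}\sigma_i$ in Section~\ref{realization}.

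Finally, since $\sigma_i(n')$ is invertible, the support of $\tilde h$ modulo $N_0'$ is the closure of $\{s:g(s)\neq 0\}$; because $g$ vanishes on the open set where $f$ does (at $s=0$ this uses that $f\equiv 0$ near $0$ forces $f'(0)=0$), we have $\mathrm{Supp}(g)\subseteq\mathrm{Supp}(f)$, which is compact because $h$ has support compact modulo $N_0'$. In particular $\tilde h$ is square-integrable modulo $N_0'$, so $\tilde h$ is a smooth element of $\mathcal H_i$ with support compact modulo $N_0'$, whence $\tilde h\in\mathcal H_i^{\infty,c}$ by the description of compactly supported smooth vectors recalled in Section~\ref{realization}. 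Then $S_i\tilde h(t)=\tilde h(\exp(tX))=G(e,t)=g(t)$, so $g=S_i\tilde h\in\cccc$, as required. The only genuinely delicate point is the smoothness of $g$, equivalently of $\tilde h$, across the hyperplane $s=0$, which is exactly the content of Lemmas~\ref{indirectiont} and~\ref{elemanal}; everything else — the factorization $h\circ\Psi(n',s)=\sigma_i(n')f(s)$, the value of $G$ at $s=0$, equivariance, and compactness of support — is routine bookkeeping.
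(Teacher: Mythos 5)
Your proof is correct and follows essentially the same route as the paper: transport $h=S_i^{-1}f$ to $N_0'\times\mathbb R$ via $\Psi$, divide by the $\mathbb R$-coordinate using Lemma~\ref{elemanal}, and read the result back through $\Psi^{-1}$ and $S_i$. The only real difference is that you make the factorization $h\circ\Psi(n',s)=\sigma_i(n')f(s)$ explicit, which lets you verify equivariance and compactness of support of $\tilde h$ by hand, whereas the paper's proof just sets $h_2(n',t)=h_1(n',t)/t$ and invokes the characterization of compactly supported smooth vectors to conclude $h_2\circ\Psi^{-1}\in\mathcal H_i^{\infty,c}$ without spelling out why the equivariance condition survives the division by $t$; your version is a touch more verbose but fills in that bookkeeping, and the redundant use of Lemma~\ref{indirectiont} (which follows from Lemma~\ref{elemanal} by restricting to $\{e\}\times\mathbb R$) is harmless.
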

\begin{proof}
Let $f=S_i\,h$ where $S_i$ is the operator defined in (\ref{sdefn})
and $h\in\mathcal H_i^{\infty,c}$. Set $h_1=h\circ\Psi$ where
$\Psi:N'_0\times \mathbb R\to  N_0$ is the map defined in (\ref{defpsi}).
For every $(n',t)\in N_0'\times\mathbb R$
we have 
$$
h_1(n',t)=t\,h_2(n',t)
$$ 
where $h_2:N_0'\times \mathbb R\to\mathcal K_i$
is defined as follows.
$$h_2(n',t)=\left
\{\begin{array}{ll}
{h_1(n',t)\over t}&\textrm{if }t\neq 0,\\[3mm]
{\partial h_1\over\partial t}(n',0)&\textrm{otherwise.}
\end{array}
\right.
$$
Lemma \ref{elemanal} implies that $h_2$ is smooth. From the description
of smooth vectors for induced representations given in \cite[Theorem 5.1]{poulsen}
or \cite[Theorem A.1.4]{corgr} it follows that the function 
$h_3=h_2\circ\Psi^{-1}$ belongs to $\mathcal H_i^{\infty,c}$. 
To complete the proof, we set $g=S_i\,h_3$.

\end{proof}

\begin{lemma}
\label{welldef}
Let $V\in\germ n_1$. Suppose that $f\in\cccc$ satisfies
$f(0)=0$. Then 
$$
\big(\rho^\pi(V)f\big)\,(0)=0.
$$
\end{lemma}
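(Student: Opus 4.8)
The plan is to combine Lemma \ref{delicacy} with Lemma \ref{commuting} and the continuity of $\rho^\pi(V)$ on smooth vectors. Suppose $f\in\cccc$ satisfies $f(0)=0$. By Lemma \ref{delicacy} there is a $g\in\cccc\subseteq\cccsi$ with $f(t)=tg(t)$ for all $t$. The idea is to write $f=\phi\cdot g$ for a suitable cutoff $\phi$ agreeing with $t\mapsto t$ on the support of $g$, apply Lemma \ref{commuting} to pull $\rho^\pi(V)$ across the multiplication operator, and then read off the value at $0$.

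More precisely, since $f\in\cccc$ has compact support modulo $N_0'$, its image under $S_i$ — and hence $g$ as well (its preimage $h_3$ in Lemma \ref{delicacy} lies in $\mathcal H_i^{\infty,c}$) — is supported in a set of the form $\{|t|\le R\}$ for some $R>0$. Choose $\phi\in C_c^\infty(\mathbb R)$ with $\phi(t)=t$ for $|t|\le R$; then $f=\phi\, g$ as elements of $L^2(\mathbb R,\mathcal K)$. First I would invoke Lemma \ref{commuting}, which gives $\rho^\pi(V)(\phi g)=\phi\,\rho^\pi(V)g$. Since $g\in\cccsi$ and $\rho^\pi(V)$ preserves $\cccsi$ (equivalently, $\rho^\pi(V)g\in\mathcal H^\infty$, whose elements are genuine functions), both sides are smooth $\mathcal K$-valued functions, so we may evaluate pointwise at $t=0$:
$$
\big(\rho^\pi(V)f\big)(0)=\phi(0)\,\big(\rho^\pi(V)g\big)(0)=0,
$$
because $\phi(0)=0$. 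This finishes the argument.

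The one point that needs care is the identification $f=\phi g$ in $L^2$: this is immediate since $f(t)=tg(t)=\phi(t)g(t)$ holds pointwise where $g(t)\neq 0$ and both sides vanish elsewhere, using that $g$ is supported in $\{|t|\le R\}$ where $\phi(t)=t$. The other subtle point — that elements of $\cccsi=S\mathcal H^\infty$ are honest smooth (indeed Schwartz) functions on which pointwise evaluation makes sense, and that $\rho^\pi(V)$ maps $\cccsi$ into itself — has already been established in Section \ref{realization} (smooth vectors for induced representations are smooth functions) together with the fact that $\rho^\pi(V)\mathcal H^\infty\subseteq\mathcal H^\infty$. So the main (very mild) obstacle is simply checking the support bound on $g$; everything else is a direct application of results already proved.
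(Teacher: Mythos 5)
Your argument is correct and matches the paper's proof in every essential respect: both factor $f(t)=t\,g(t)$ via Lemma~\ref{delicacy}, replace the multiplier $t$ with a compactly supported smooth function agreeing with $t$ on $\mathrm{Supp}(g)$ (the paper writes it as $\psi_1(t)=t\,\psi(t)$ with $\psi$ a bump equal to $1$ there; you pick $\phi$ directly), then invoke Lemma~\ref{commuting} and evaluate at $0$. This is the paper's proof modulo a cosmetic choice of cutoff.
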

\begin{proof}
By Lemma \ref{delicacy} we have $f(t)=t\,g(t)$ where $g\in\cccc$.
Since $g$ has compact support, it is not hard to see that there exists
a function $\psi\in C^\infty_c(\mathbb R)$ such that $\psi(t)=1$ for 
$t\in\mathrm{Supp}(g)\cup\{0\}$. It follows that $f=\psi f$. 
Set $\psi_1(t)=t\,\psi(t)$.
By Lemma \ref{commuting} we have 
$$
Tf=T\psi f=T(\psi_1g)=\psi_1Tg,
$$ hence $(Tf)(0)=\psi_1(0)\big((Tg)(0)\big)=0$,
which completes the proof.

\end{proof}
\vspace{6mm}

\noindent{\bf Remark.} 1. 
Let $V\in\germ n_1$ and suppose that $f\in\cccc$ 
satisfies $f(t_\circ)=0$ for some
$t_\circ\in\mathbb R$. 
Then $\big(\pi_i(\exp(t_\circ X))f\big)(0)=0$, hence
by Lemma \ref{welldef}
\begin{eqnarray}
\label{eqnabouttzero}
\notag
\big(\rho^{\pi_i}(V)f\big)(t_\circ)&=&
\big(\pi_i(\exp(t_\circ X))\rho^{\pi_i}(V)f\big)(0)\\
&=&\Big(\rho^{\pi_i}\big(\exp(t_\circ X)\cdot V\big)\pi_i(\exp(t_\circ X))f\Big)(0)
=0
\end{eqnarray}
\vspace{-1mm}

\noindent 2. An immediate consequence of (\ref{eqnabouttzero}) is that
$\rho^\pi(V)\ccca\subseteq\ccca$.\\[1mm]

For every $V\in\germ n_1$ we define a family of linear
operators 
$$
T_{V,t}:\mathcal K_1^\infty\oplus\mathcal K_2^\infty\to 
\mathcal K_1^\infty\oplus\mathcal K_2^\infty
$$ 
as follows. For every $i\in\{0,1\}$, $t_\circ\in\mathbb R$, and 
$v\in\mathcal K_i^\infty$, choose an $f\in\cccc$ such that 
$f(t_\circ)=v$.
For instance, one can fix $\varphi\in C_c^\infty(\mathbb R)$ such that 
$\varphi(t_0)=1$, and take $f=S_i(h\circ \Psi^{-1})$ where
$h:N'_0\times\mathbb R\to \mathcal K_i$ is given by 
$$
h(n',t)=\varphi(t)\sigma_i(n')v.
$$
Now set 
$$
T_{V,t}v=(\rho^\pi(V)f)(t).
$$
Lemma 
\ref{welldef} and the remark after this lemma 
imply that the operators $T_{V,t}$ are well defined.
Since $\rho^\pi(V)$ is odd, it follows that the $T_{V,t}$'s are 
odd operators. 

We now
set $\sigma=\sigma_0\oplus\sigma_1$, and
for any $W\in\germ n_1$ define 
$\rho^\sigma(W):\mathcal K^\infty\to\mathcal K^\infty$ 
by 
$\rho^\sigma(W)v=T_{W,0}v$.
Our next task is to verify that the triple $(\sigma,\rho^\sigma,\mathcal K)$ 
satisfies the conditions of Definition \ref{unirep}. Linearity of
$\rho^\sigma$ and condition (a) 
of Definition \ref{unirep} are obvious. Next we prove 
that for every $W\in\germ n_1$ the
operator $\rho^\sigma(W)$ is symmetric.
Suppose, on the contrary, that $\rho^\sigma(W)$ is not 
symmetric, and
let $v,w\in\mathcal K^\infty$ such that 
$$
\langle\rho^\sigma(W)v,w\rangle\neq\langle v,\rho^\sigma(W)w\rangle.
$$
Choose $\varphi\in C_c^\infty(\mathbb R)$ such that $\phi(0)=1$, and consider
two functions 
$$
f_v,f_w:N'_0\times \mathbb R\to \mathcal K
$$ 
defined by
$$
f_v(n',t)=\varphi(t)\sigma(n')v\textrm{\ \ \ and\ \ \  }
f_w(n',t)=\varphi(t)\sigma(n')w.
$$
The functions $f_v\circ\Psi^{-1}$ and $f_w\circ\Psi^{-1}$ belong to
$\mathcal H^{\infty,c}$.
Let $g_v,g_w\in\ccca$ be defined by
$$
g_v=S(f_v\circ\Psi^{-1}) \textrm{\ \ \  and\ \ \ } 
g_w=S(f_w\circ\Psi^{-1}).
$$
It is readily seen that 
$$
\big(\rho^\pi(W)g_v\big)\,(t)=T_{\exp(tX)\cdot W,0}\big(g_v(t)\big).
$$
If $\{W,W_1,...,W_r\}$ is a basis containing $W$ for $\germ n_1$, then 
\begin{eqnarray}
\label{analytaa}
\exp(tX)\cdot W=\gamma_0(t)W+\sum_{i=1}^r\gamma_i(t)W_i
\end{eqnarray}
where for every $0\leq i\leq r$ the function 
$\gamma_i(t):\mathbb R\to\mathbb R $ is smooth. Moreover,  
$$
\lim_{t\to 0}\gamma_0(t)=1 \textrm{\,\, and for every }1\leq i\leq r,\ 
\lim_{t\to 0}\gamma_i(t)=0.
$$
Now 
\begin{eqnarray*}
\big(\rho^\pi(W)g_v\big)(t)&=&
T_{\exp(tX)\cdot W,0}(g_v(t))\\
&=&\gamma_0(t)T_{W,0}(g_v(t))
+\sum_{i=1}^r\gamma_i(t)T_{W_i,0}(g_v(t))\\
&=&\varphi(t)
\left(\gamma_0(t)T_{W,0}v+\sum_{i=1}^r\gamma_i(t)T_{W_i,0}v\right).
\end{eqnarray*}
But
\begin{equation}
\label{aboutt-1}
\lim_{t\to 0}
\Big(\gamma_0(t)T_{W,0}v+\sum_{i=1}^r\gamma_i(t)T_{W_i,0}v\Big)=
T_{W,0}v=\rho^\sigma(W)v
\end{equation}
and
\begin{eqnarray}
\label{aboutt-2}
\langle\rho^\pi(W)g_v,g_w\rangle&=&
\int_{-\infty}^\infty\langle\,\big(\rho^\pi(W)g_v\big)(t),g_w(t)\,\rangle\,dt\\
&=&\int_{-\infty}^\infty\varphi(t)^2
\langle\,\Big(\gamma_0(t)T_{W,0}v+\sum_{i=1}^r\gamma_i(t)T_{W_i,0}v\Big)
,w \,\rangle\,dt.
\notag
\end{eqnarray}
Similarly we have 
\begin{equation}
\label{aboutt-3}
\lim_{t\to 0}
\Big(\gamma_0(t)T_{W,0}w+\sum_{i=1}^r\gamma_i(t)T_{W_i,0}w\Big)=
T_{W,0}w=\rho^\sigma(W)w
\end{equation}
and 
\begin{eqnarray}
\label{aboutt-4}
\langle g_v,\rho^\pi(W) g_w\rangle&=&
\int_{-\infty}^\infty\langle\,g_v(t),\big(\rho^\pi(W)g_w\big)(t)\,\rangle\,dt\\
&=&\int_{-\infty}^\infty\varphi(t)^2
\langle\,v,
\Big(\gamma_0(t)T_{W,0}w+\sum_{i=1}^r\gamma_i(t)T_{W_i,0}w\Big) \,\rangle\,dt .
\notag
\end{eqnarray}
From (\ref{aboutt-1}), (\ref{aboutt-2}), 
(\ref{aboutt-3}) and (\ref{aboutt-4}) 
it follows that if $\mathrm{Supp}(\varphi)$ is 
small enough, then 
$$
\langle \rho^\pi(W)g_v,g_w\rangle\,\neq\,\langle g_v,\rho^\pi(W)g_w\rangle
$$
which contradicts the fact that $\rho^\pi(W)$ is symmetric.

Condition (c) of Definition \ref{unirep} can be verified as follows.
Let $V,W\in\germ g_1$ and $v\in \mathcal K^\infty$. Choose an $f\in\ccca$
such that $f(0)=v$. We have 
\begin{eqnarray*}
\big(
\rho^\sigma(V)\rho^\sigma(W)\!+\!\rho^\sigma(V)\rho^\sigma(W)
\big)v
&=&
\Big(\rho^\pi(V)\big(\rho^\pi(W)f\big)\Big)(0)
\!+\!
\Big(\rho^\pi(V)\big(\rho^\pi(W)f\big)\Big)(0)\\
&=&
-\sqrt{-1}\big(\pi^\infty([V,W])f\big)(0).
\end{eqnarray*}
Since $[V,W]\in\germ n_0'$, from
(\ref{sigi})
it follows that 
$$
\big(\pi^\infty([V,W])\big)f(0)=\sigma^\infty([V,W])\big(f(0)\big)=
\sigma^\infty([V,W])v
$$
which completes the proof of condition (c).

Finally, we prove condition (d) of Definition \ref{unirep}.
Let $V\in\germ n_1$, $g\in N'_0$, and $w\in \mathcal K^\infty$.
Let $f\in\ccca$ be such that $f(0)=w$.
Using (\ref{sigi}) we have
\begin{eqnarray*}
\rho^\sigma(g\cdot V)w&=&\big(\rho^\pi(g\cdot V)f\big)(0)\\
&=&
\big(\pi(g)\rho^\pi(V)\pi(g)^{-1}f \big)(0)\\
&=&\sigma(g)\Big(\big(\rho^\pi(V)\pi(g^{-1})f\big)(0)\Big)\\
&=&\sigma(g)
\Big(
\rho^\sigma(V)\Big(\big(\pi(g^{-1})f\big)(0)\Big)
\Big)\\
&=&\sigma(g)\Big(\rho^\sigma(V)\Big(\sigma(g^{-1})\big(f(0)\big)\Big)\Big)\\
&=&\sigma(g)\rho^\sigma(V)\sigma(g^{-1})w.
\end{eqnarray*}

To finish the proof of Proposition \ref{induction-prop}, note that 
the unitary representations 
$(\pi,\rho^\pi,\mathcal H)$ and 
$$
\ind_{(N'_0,\germ n')}^{(N_0,\germ n)}(\sigma,\rho^\sigma,\mathcal K)
$$
are identical on $\mathcal H^{\infty,c}$. 
This follows from the fact that for every 
$V\in\germ n_1$, $t\in\mathbb R$, and $f\in\ccca$ we have 
$$
(\rho^\pi(V)f\,)(t)=\rho^\sigma(\exp(tX)\cdot V)\big(f(t)\big).
$$
Consequently,
Proposition \ref{varadarajan} implies that these representations are unitarily 
equivalent.
Since $(\pi,\rho^\pi,\mathcal H)$
is assumed to be irreducible, it follows that 
$(\sigma,\rho^\sigma,\mathcal K)$
is irreducible as well. 


\subsection{Stone-von Neumann theorem for Heisenberg-Clifford supergroups}
\label{svngeneral}

In this section we show how to use Proposition \ref{induction-prop} 
to prove
a generalization of the Stone-von Neumann theorem for Heisenberg-Clifford
super Lie groups.

Let $(N_0,\germ n)$ be a Heisenberg-Clifford super Lie group
(see Section \ref{exampcliff}). For every $P,Q\in\germ n_1$,
the value of the bracket $[P,Q]$ lies in $\germ n_0=\mathbb R$,
and hence can be thought of as a real number.
Consider the symmetric
bilinear form 
$$
\mathrm B:\germ n_1\times\germ n_1\to \mathbb R
$$ 
defined by 
$\mathrm B(P,Q)=[P,Q]$.

Let $(\pi,\rho^\pi,\mathcal H)$ be 
an irreducible unitary representation
of $(N_0,\germ n)$. 
By \cite[Lemma 5]{vara} the action of $\mathcal Z(N_0)$ is 
via multiplication by a unitary character 
$$
\chi:\mathcal Z(N_0)\to \mathbb C^\times.
$$
When $\mathrm B$ is a definite form, we say that the 
character $\chi$ \emph{agrees with} $\mathrm B$ 
if there exists a positive real number $c$ such that 
for every $P\in\germ n_1$,
$$
\chi([P,P])=e^{cB(P,P)\sqrt{-1}}.
$$

If $\chi$ is the trivial character, then it is easily seen 
that the sub super Lie group
$(\mathcal Z(N_0),[\germ n_1,\germ n_1]\oplus\germ n_1)$
of $(N_0,\germ n)$ belongs to the kernel of $(\pi,\rho^\pi,\mathcal H)$.
Consequently, $(\pi,\rho^\pi,\mathcal H)$ yields an irreducible representation
of the abelian Lie group $N_0/\mathcal Z(N_0)$. 
It follows that 
$(\pi,\rho^\pi,\mathcal H)$ is a one-dimensional representation obtained
from a unitary character of $N_0/\mathcal Z(N_0)$.

If $\chi$ is not trivial, then we have the following
result.

\begin{theorem}
\label{generstonevn}
Suppose that the unitary character 
$\chi:\mathcal Z(N_0)\to \mathbb C^\times$ is nontrivial.

\begin{itemize}
\item[(a)] If $\mathrm B$ is an indefinite form, then there are no 
irreducible unitary representations of $(N_0,\germ n)$ with
central character $\chi$.
\item[(b)] Suppose that $\mathrm B$ is a definite form.
If $\chi$ agrees with
$\mathrm B$, then up to
unitary equivalence and parity change
there exists a unique irreducible unitary representation of $(N_0,\germ n)$
with central character $\chi$. If $\chi$ does not agree with $\mathrm B$,
then such a unitary representation does not exist.
\end{itemize}
\end{theorem}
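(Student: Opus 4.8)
The plan is to derive a positivity constraint from the compatibility relation of Definition~\ref{unirep}(c), which immediately yields part~(a) and the non-existence clauses of part~(b), and then to prove the affirmative part of~(b) by repeatedly stripping off Heisenberg pairs with Proposition~\ref{induction-prop} until one reaches a super Lie group of Clifford type, where Proposition~\ref{propcl} applies. So let $(\pi,\rho^\pi,\mathcal H)$ be irreducible with central character $\chi$, and write $\pi(\exp(tZ))=e^{tb\sqrt{-1}}\,\mathrm{Id}$ with $b\neq 0$ (as $\chi$ is nontrivial). For any $P\in\germ n_1$ we have $[P,P]=\mathrm B(P,P)Z$, so Definition~\ref{unirep}(c) gives $2\rho^\pi(P)^2=-\sqrt{-1}\,\pi^\infty([P,P])=\mathrm B(P,P)\,b\,\mathrm{Id}$; since $\rho^\pi(P)$ is symmetric and preserves $\mathcal H^\infty$, we get $0\le\|\rho^\pi(P)v\|^2=\tfrac12\,\mathrm B(P,P)\,b\,\|v\|^2$ for all $v\in\mathcal H^\infty$, hence $\mathrm B(P,P)\,b\ge 0$ for every $P$. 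As $\mathrm B$ is nondegenerate (because $\omega$ is), this forces $\mathrm B$ to be definite, of the same sign as $b$; a comparison of exponentials identifies this sign condition with the requirement that $\chi$ agree with $\mathrm B$. This proves~(a), and shows that no representation exists when $\mathrm B$ is definite but $\chi$ does not agree with $\mathrm B$.

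For the remaining assertion of~(b) we now assume $\mathrm B$ is definite and $\chi$ agrees with it, and induct on the number $m$ of Heisenberg pairs in the basis~(\ref{basisiss}). If $m=0$, then (after replacing $Z$ by $-Z$ when $\mathrm B$ is negative definite) $(N_0,\germ n)$ is of Clifford type, and Proposition~\ref{propcl} provides a unique irreducible unitary representation with $\mu(Z)=|b|$, up to unitary equivalence and parity change. If $m\ge 1$, then $\germ n$ is reduced (no nonzero $V\in\germ n_1$ satisfies $[V,V]=0$, since $\mathrm B$ is definite), $\dim\germ n>1$, $\dim\mathcal Z(\germ n)=1$, and $(N_0,\germ n)$ is not of Clifford type; hence Proposition~\ref{kirillovlemma}(a) applies with $X=X_1$, $Y=Y_1$ and central $Z$, and Proposition~\ref{induction-prop} produces an irreducible $(\sigma,\rho^\sigma,\mathcal K)$ of $(N'_0,\germ n')$ with $\pi\simeq\ind_{(N'_0,\germ n')}^{(N_0,\germ n)}(\sigma,\rho^\sigma,\mathcal K)$. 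Here $\germ n'=\{V\in\germ n:[V,Y_1]=0\}$, and since $[\germ n_0,\germ n_1]=0$ and distinct Heisenberg pairs commute, $\germ n'=\mathbb R Y_1\oplus\germ n''$ as Lie superalgebras, with $\mathbb R Y_1$ a central ideal and $\germ n''=\spn_\mathbb R\{Z,X_2,\dots,X_m,Y_2,\dots,Y_m,V_1,\dots,V_n\}$ again Heisenberg--Clifford, with $m-1$ pairs, the same odd part and the same restricted form; correspondingly $N'_0\cong\mathbb R\times N''_0$.

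Since $\exp(\mathbb R Y_1)$ acts in $\sigma$ by a scalar character $e^{t\beta\sqrt{-1}}$, every closed $\mathbb Z_2$-graded $(N''_0,\germ n'')$-invariant subspace of $\mathcal K$ is automatically $(N'_0,\germ n')$-invariant; hence $\sigma|_{(N''_0,\germ n'')}$ is irreducible, with central character $e^{tb\sqrt{-1}}$, and by the induction hypothesis it is unique up to unitary equivalence and parity change. Moreover $\beta$ does not affect $\pi$: as $N'_0\triangleleft N_0$, conjugating $\sigma$ by $\exp(sX_1)$ gives a representation of $(N'_0,\germ n')$ whose induced representation is unitarily equivalent to $\pi$ (a standard property of induction, which here may be verified on $\mathcal H^{\infty,c}$ via Proposition~\ref{varadarajan}); but $\exp(sX_1)$ commutes with $N''_0$ and fixes $\germ n''$ pointwise, while $\exp(-sX_1)\cdot Y_1=Y_1-sZ$, so this conjugation leaves $\sigma|_{(N''_0,\germ n'')}$ unchanged and sends $\beta$ to $\beta-sb$, which runs over all of $\mathbb R$ since $b\neq 0$. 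Therefore $\sigma$, hence $\pi$, is unique up to unitary equivalence and parity change. For existence one can argue directly: realize the Stone--von Neumann representation $\pi_0$ of the Heisenberg group $N_0$ with central character $\chi$ on a Hilbert space $\mathcal H_0$, put $\mathcal H=\mathcal H_0\otimes S$ with $S$ an irreducible $\mathbb Z_2$-graded module for the complex Clifford algebra of the positive definite form $(P,Q)\mapsto b\,\mathrm B(P,Q)$ on $\germ n_1$, and set $\pi=\pi_0\otimes 1$, $\rho^\pi(V)=1\otimes\gamma(V)$; then conditions (a)--(d) of Definition~\ref{unirep} follow from Proposition~\ref{varadarajan} (condition~(d) using $[\germ n_0,\germ n_1]=0$), and irreducibility holds because $\pi_0$ generates $B(\mathcal H_0)$ and $S$ is Clifford-irreducible.

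The main obstacle is that Proposition~\ref{induction-prop} creates the extra central line $\mathbb R Y_1$ and so leaves the Heisenberg--Clifford class: one must recognize the character $\beta$ of that line as a spurious parameter that is washed out by conjugation inside $N_0$, and one must know that conjugation-equivalent inducing data produce unitarily equivalent induced representations \emph{in the sense of Definition~\ref{unirep}}, not merely for the underlying unitary representations of $N_0$ --- which is where the stability statement, Proposition~\ref{varadarajan}, enters. A secondary, purely bookkeeping, point is to match the positivity condition $\mathrm B(P,P)\,b\ge 0$ found above with the precise meaning of ``$\chi$ agrees with $\mathrm B$''.
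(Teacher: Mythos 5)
Your proof is correct, and it follows the same overall strategy as the paper (reduce via Proposition~\ref{induction-prop} to a smaller Heisenberg--Clifford group and induct down to the Clifford-type base case handled by Proposition~\ref{propcl}), but the individual pieces are handled by genuinely different arguments. For part~(a) and the non-existence clauses of~(b), you extract the constraint $\mathrm B(P,P)\,b\ge 0$ directly from Definition~\ref{unirep}(c) and the symmetry of $\rho^\pi(P)$ -- a short, self-contained positivity argument. The paper instead derives part~(a) from the observation that when $\mathrm B$ is indefinite, the ideal $\germ a[\germ n]$ equals $[\germ n_1,\germ n_1]\oplus\germ n_1$ and hence, by Proposition~\ref{nilrad}, the center acts trivially. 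Both are valid; yours is more elementary and does not require the machinery of the reduced form. For the uniqueness in part~(b), the key technical point is that the inducing representation $(\sigma,\rho^\sigma,\mathcal K)$ produced by Proposition~\ref{induction-prop} lives on $(N'_0,\germ n')\cong(\mathbb R,\{0\})\times(N''_0,\germ n'')$, and one must eliminate the extraneous character on the $\mathbb RY_1$ factor. The paper resolves this by reading off, from the explicit construction $\sigma_i(g)=T_{g,0}$ inside the proof of Proposition~\ref{induction-prop}, that $\sigma^\infty(Y_1)=0$; you instead treat the $Y_1$-character $\beta$ as a potentially nonzero parameter and show it is washed out by conjugating the inducing data by $\exp(sX_1)$ (a legitimate move since $N'_0\triangleleft N_0$, and verified in the super sense via Proposition~\ref{varadarajan}). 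Your argument is more robust in that it uses Proposition~\ref{induction-prop} only as a black box, at the modest cost of having to verify the conjugation-invariance of super induction. Finally, you supply an explicit existence construction $\mathcal H_0\otimes S$ (Stone--von Neumann tensor Clifford module) which the paper leaves implicit, so your write-up is somewhat more complete on that point. You correctly flag the minor bookkeeping issue of matching your sign condition $\mathrm B(P,P)\,b\ge 0$ to the paper's phrase ``$\chi$ agrees with $\mathrm B$''; the paper's definition as literally stated is tied to a choice of $Z$, and your existence/non-existence dichotomy is the invariant content.
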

\begin{proof}
Part (a) follows from the fact that $\germ a[\germ n]=[\germ n_1,\germ n_1]
\oplus\germ n_1$.
Part (b) is proved by induction on the dimension of $N_0$ as follows.
Let 
$$
\{Z,X_1,X_2,...,X_m,Y_1,...,Y_m,V_1,..,V_n\}
$$
be the basis of $\germ n$ given in
(\ref{basisiss}), and 
$(\pi,\rho^\pi,\mathcal H)$
be an irreducible unitary representation of $(N_0,\germ n)$ 
with central character $\chi$.
By Proposition
\ref{induction-prop} we have
$$(\pi,\rho^\pi,\mathcal H)=
\ind_{(N'_0,\germ n')}^{(N_0,\germ n)}(\sigma,\rho^\sigma,\mathcal K)
$$
where 
$$
\germ n'=\spn_\mathbb R\{Z,X_2,...,X_m,Y_1,...,Y_m,V_1,..,V_n\}.
$$
Moreover, from the proof of Proposition
\ref{induction-prop} it follows that $\sigma^\infty(Y_1)=0$.
Therefore $(\sigma,\rho^\sigma,\mathcal K)$ factors through
a representation of a Heisenberg-Clifford super Lie group
$(N_0'',\germ n'')$ where 
$\germ n''=\germ n'/\germ r$ and $\germ r=\spn_\mathbb R\{Y_1\}$.
Since $\dim N_0''<\dim N_0$, the proof 
is completed by induction on $\dim N_0$. Details are left
to the reader.

\end{proof}
\noindent{\bf Remark.} Suppose that $\chi$ is nontrivial, $\mathrm B$ is 
definite, and $\chi$ agrees with $\mathrm B$.
Then part (b) of Theorem \ref{generstonevn}
can be refined slightly as follows. When 
$\dim \germ n_1$ is even, there exist two 
irreducible unitary
representations which are not unitarily 
equivalent. However, when $\dim\germ n_1$ is
odd, we obtain a unique such representation up to unitary equivalence.
Indeed the restriction to 
$(\mathcal Z(N_0),[\germ n_1,\germ n_1]\oplus\germ n_1)$
of such a representation is a countable direct sum of 
modules for a complex Clifford algebra, 
and when $\dim \germ n_1$ is even there
are two nonisomorphic such modules \cite[Chapter 5]{lawson}. 
The details are left to the reader.

\section{Polarizing systems and main theorems}
\label{mainressec}
\subsection{Polarizing systems}
\label{mainressecsix}
Throughout this 
section $(N_0,\germ n)$ is a (not necessarily reduced)
nilpotent super Lie group. 

\begin{definition}
\label{polarizing}
A polarizing system of $(N_0,\germ n)$ is a 6-tuple 
$(M_0,\germ m,\Phi,C_0,\germ c,\lambda)$ 
where
\begin{itemize}
\item[(a)] $(M_0,\germ m)$ is a special 
sub super Lie group of $(N_0,\germ n)$. 
\item[(b)] $\lambda:\germ n_0\to \mathbb R$ 
is an $\mathbb R$-linear functional
and $\germ m_0$ is a  
polarizing subalgebra of $\germ n_0$ corresponding to $\lambda$. 
\item[(c)] $(C_0,\germ c)$ is a super Lie group of Clifford type 
and $\Phi$ is a surjective
homomorphism 
$$
\Phi:(M_0,\germ m)\to (C_0,\germ c).
$$
\item[(d)] $\germ m_0\cap\ker\Phi=\germ m_0\cap\ker \lambda$.
\end{itemize}
\end{definition}
Let $(M_0,\germ m,\Phi,C_0,\germ c,\lambda)$ be a polarizing system 
of $(N_0,\germ n)$ and $(\sigma_\mu,\rho^{\sigma_\mu},\mathcal K_\mu)$ be 
the irreducible unitary representation of $(C_0,\germ c)$ 
associated to a linear functional $\mu:\germ c_0\to\mathbb R$
(see Section \ref{unitclifford}).
One can compose $(\sigma_\mu,\rho^{\sigma_\mu},\mathcal K_\mu)$
with the map 
$$
\Phi:(M_0,\germ m)\to(C_0,\germ c)
$$ 
and obtain an irreducible unitary representation 
$(\sigma_\mu\circ\Phi,\rho^{\sigma_\mu\circ\Phi},\mathcal K_\mu)$ of $(M_0,\germ m)$.
The representation $(\sigma_\mu,\rho^{\sigma_\mu},\mathcal K_\mu)$
is said to be \emph{consistent} with the polarizing system
if
\begin{equation}
\label{compathadi}
\textrm{for every }W\in\germ m_0,\ \ \lambda(W)=\mu\circ\Phi(W).
\end{equation}
We will see below that consistent representations play
a special role in the classification of irreducible unitary representations.
\begin{theorem}
\label{thmindpol}
Let $(\pi,\rho^\pi,\mathcal H)$ be an irreducible unitary representation of
a nilpotent super Lie group $(N_0,\germ n)$. 
\begin{itemize}
\item[(a)]
There exists a polarizing system $(M_0,\germ m,\Phi,C_0,\germ c,\lambda)$
and an irreducible unitary representation $(\sigma_\mu,\rho^{\sigma_\mu},\mathcal K_\mu)$
of $(C_0,\germ c)$ 
which is consistent with
$$
(M_0,\germ m,\Phi,C_0,\germ c,\lambda)
$$ 
such that
\begin{equation}
\label{whatisit}
(\pi,\rho^\pi,\mathcal H)=
\ind_{(M_0,\germ m)}^{(N_0,\germ n)}
(\sigma_\mu\circ\Phi,\rho^{\sigma_\mu\circ\Phi},\mathcal K_\mu).
\end{equation}
\item[(b)]
Suppose that $(M'_0,\germ m',\Phi',C'_0,\germ c',\lambda')$ is another polarizing system 
and 
$$
(\sigma_{\mu'},\rho^{\sigma_{\mu'}},\mathcal K_{\mu'})
$$ 
is a representation
of 
$(C'_0,\germ c')$
consistent with 
$(M'_0,\germ m',\Phi',C_0',\germ c',\lambda')$
such that
$$
(\pi,\rho^\pi,\mathcal H)=
\ind_{(M'_0,\germ m')}^{(N_0,\germ n)}
(\sigma_{\mu'}\circ\Phi,\rho^{\sigma_{\mu'}\circ\Phi},\mathcal K_{\mu'}).
$$
Then
there exists an $n\in N_0$ such that 
$$
\lambda'=\mathrm{Ad}^*(n)(\lambda).
$$
Moreover, the super Lie groups $(C_0,\germ c)$ and $(C'_0,\germ c')$
are isomorphic.
\end{itemize}
\end{theorem}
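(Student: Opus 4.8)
The plan is to prove part~(a) by induction on $\dim\germ n$, using Propositions~\ref{induction-prop}, \ref{indtransitivity}, \ref{propcl} and \ref{kirillovlemma} as the engine, and to prove part~(b) by restricting $\pi$ to the even part $N_0$ and invoking classical Kirillov theory together with a structural identification of the Clifford datum. For part~(a), first observe that if $\germ n$ is not reduced, then a polarizing system of the reduced form $(\overline N_0,\overline{\germ n})$ (which has strictly smaller dimension, hence is covered by the induction) pulls back along $\germ n\to\overline{\germ n}$: one takes preimages of $M_0,\germ m$, keeps $(C_0,\germ c)$, and composes $\Phi$ and $\lambda$ with the quotient. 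Since $\germ a[\germ n]$ lies in the radical of $\omega_\lambda$, a polarizing subalgebra of $\overline{\germ n}_0$ pulls back to one of $\germ n_0$, while condition~(d) and the consistency relation~(\ref{compathadi}) involve only the data downstairs, so they survive; thus one may assume $\germ n$ reduced. The base case is $\germ n$ of Clifford type (including $\germ n=\{0\}$ and $\dim\germ n=\dim\germ n_0=1$), where Proposition~\ref{propcl} gives $(\pi,\rho^\pi,\mathcal H)\eqsim(\sigma_\mu,\rho^{\sigma_\mu},\mathcal K_\mu)$: when $\mu\neq0$ one uses the polarizing system $(N_0,\germ n,\mathrm{id},N_0,\germ n,\mu)$ ($\germ m_0=\germ n_0$ is one-dimensional, hence automatically a polarization, and (d) holds because $\mu(Z)\neq0$), and when $\pi$ is trivial one uses $(N_0,\germ n,\Phi_0,\{e\},\{0\},0)$ with $\Phi_0$ the trivial homomorphism.

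For the inductive step, assume $\germ n$ reduced, $\dim\germ n>1$, and $\germ n$ not of Clifford type. If $\dim\mathcal Z(\germ n)\geq2$, or if $\dim\mathcal Z(\germ n)=1$ but $\pi^\infty$ vanishes on $\mathcal Z(\germ n)$, then there is a nonzero $Z_1\in\mathcal Z(\germ n)\subseteq\germ n_0$ with $\pi^\infty(Z_1)=0$; then $(\pi,\rho^\pi,\mathcal H)$ descends to a representation of $(N_0/\exp\mathbb R Z_1,\germ n/\mathbb R Z_1)$, the induction hypothesis supplies a polarizing system there, and it pulls back as in the reduced case (with $\lambda(Z_1)=0$, so $Z_1$ lies in the radical of $\omega_\lambda$). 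Otherwise $\dim\mathcal Z(\germ n)=1$ and $\pi^\infty$ is nontrivial on the centre, so by Proposition~\ref{kirillovlemma}(a) we are in the Kirillov situation $\germ n=\mathbb R X\oplus\germ n'$, $\germ n'=\mathbb R Y\oplus\mathbb R Z\oplus\germ w$, $[X,Y]=Z\in\mathcal Z(\germ n)$, and Proposition~\ref{induction-prop} gives an irreducible $(\sigma,\rho^\sigma,\mathcal K)$ of $(N_0',\germ n')$ with $\pi\simeq\ind_{(N_0',\germ n')}^{(N_0,\germ n)}(\sigma,\rho^\sigma,\mathcal K)$. Since $\germ n_1'=\germ n_1$, the group $(N_0',\germ n')$ is special in $(N_0,\germ n)$ (and $N_0,N_0',M_0$ are unimodular), so the induction hypothesis applied to $\sigma$ yields a polarizing system $(M_0,\germ m,\Phi,C_0,\germ c,\lambda')$ of $(N_0',\germ n')$ with a consistent $\sigma_\mu$, and induction in stages (Proposition~\ref{indtransitivity}) gives $\pi\simeq\ind_{(M_0,\germ m)}^{(N_0,\germ n)}(\sigma_\mu\circ\Phi,\ldots)$.

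It then remains to check that $(M_0,\germ m,\Phi,C_0,\germ c,\lambda')$ is the restriction of a polarizing system of $(N_0,\germ n)$: specialness is inherited from $\germ m_1=\germ n_1$, and one needs an extension $\tilde\lambda\in\germ n_0^*$ of $\lambda'\in(\germ n_0')^*$ for which $\germ m_0$ is still a polarizing subalgebra; then $\tilde\lambda|_{\germ m_0}=\lambda'|_{\germ m_0}$ preserves both consistency and~(d). I expect this to be the main obstacle. I would note that $\germ n_0'$ is a codimension-one ideal of $\germ n_0$ with $Y\in\mathcal Z(\germ n_0')$, and that $\lambda'(Z)\neq0$ (because $Z$ is central in $\germ n'$, so $Z\in\germ m_0$, and $\sigma$ has the same central character on $Z$ as $\pi$, which is nontrivial; by consistency that character is $\exp(tZ)\mapsto e^{t\lambda'(Z)\sqrt{-1}}$). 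From $\lambda'(Z)\neq0$ a dimension count shows that for \emph{any} extension $\tilde\lambda$ the radical of $\omega_{\tilde\lambda}$ on $\germ n_0$ has dimension one less than the radical of $\omega_{\lambda'}$ on $\germ n_0'$, whence $\dim\germ m_0$ is exactly half of $\dim\germ n_0$ plus $\dim(\mathrm{rad}\,\omega_{\tilde\lambda})$; since $\germ m_0\subseteq\germ n_0'$ is a subalgebra and remains isotropic for $\omega_{\tilde\lambda}$, it is a polarization of $\germ n_0$. Taking, say, $\tilde\lambda(X)=0$ finishes part~(a); I expect this arithmetic, together with the bookkeeping of the induction (reducedness, the descent step), to be the bulk of the work.

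For part~(b), the plan is as follows. For any polarization one has $\mathcal Z(\germ n_0)\subseteq\germ m_0$, and $\sigma_\mu|_{C_0}$ is the character $\exp(tZ)\mapsto e^{t\mu(Z)\sqrt{-1}}$ times the identity; composing with $\Phi$ and using~(\ref{compathadi}) shows that $\sigma_\mu\circ\Phi$, restricted to the ordinary Lie group $M_0$, is $\dim\mathcal K_\mu$ copies of the unitary character $\chi_\lambda(\exp W)=e^{\lambda(W)\sqrt{-1}}$, so the underlying unitary representation of $N_0$ satisfies $\pi\simeq(\ind_{M_0}^{N_0}\chi_\lambda)^{\oplus\dim\mathcal K_\mu}$. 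Since $\germ m_0$ is a polarizing subalgebra for $\lambda$, the ordinary induced representation $\ind_{M_0}^{N_0}\chi_\lambda$ is the irreducible representation of $N_0$ attached to the coadjoint orbit $\mathrm{Ad}^*(N_0)\lambda$; running the same computation for the primed data and comparing irreducible constituents gives $\mathrm{Ad}^*(N_0)\lambda=\mathrm{Ad}^*(N_0)\lambda'$ (hence $\lambda'=\mathrm{Ad}^*(n)\lambda$ for some $n\in N_0$) and $\dim\mathcal K_\mu=\dim\mathcal K_{\mu'}$. For $(C_0,\germ c)\simeq(C_0',\germ c')$: a nonzero super Lie group of Clifford type is determined up to isomorphism by $\dim\germ c_1$, and using $\germ m_1=\germ n_1$, surjectivity of $\Phi$, consistency and~(d) one checks that $\mathrm B_\lambda$ is the pullback along $\Phi|_{\germ n_1}$ of the positive-definite form of $\germ c_1$ scaled by $\mu(Z)>0$, so that $\ker(\Phi|_{\germ n_1})$ is the radical of $\mathrm B_\lambda$ and $\dim\germ c_1=\mathrm{rank}\,\mathrm B_\lambda$; separately, from consistency, (d) and Lemma~\ref{nonzeropolar} one gets $\germ c=\{0\}$ exactly when $\lambda=0$, so the identity holds in all cases. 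Since $\mathrm B_\lambda$ and $\mathrm B_{\lambda'}$ are $N_0$-conjugate they have equal rank, which forces $(C_0,\germ c)\simeq(C_0',\germ c')$. I expect this last identification of $(C_0,\germ c)$ — extracting the Clifford-type datum from the consistency conditions — to be the only delicate point of part~(b).
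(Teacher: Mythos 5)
Your proof of part (a) follows the paper's argument essentially verbatim: induct on $\dim\germ n$ by first descending through the reduced form $\germ n/\germ a[\germ n]$ or through a codimension-one piece of the centre when the central character is partially trivial, and in the remaining case (reduced, $\dim\mathcal Z(\germ n)=1$, nontrivial central character, not Clifford type) combine Proposition~\ref{induction-prop} with induction-in-stages (Proposition~\ref{indtransitivity}) and then extend $\lambda'\in(\germ n_0')^*$ to a $\tilde\lambda\in\germ n_0^*$ using $\lambda'(Z)\neq 0$ and the same count of radical dimensions to verify that $\germ m_0$ stays polarizing. This is exactly the paper's Cases I--III.

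For part (b), the derivation of $\lambda'=\mathrm{Ad}^*(n)\lambda$ via classical Kirillov theory on $\pi|_{N_0}\simeq(\ind_{M_0}^{N_0}\chi\circ\Phi)^{\oplus\dim\mathcal K_\mu}$ is the paper's. Your argument for $(C_0,\germ c)\simeq(C'_0,\germ c')$ is genuinely different, and it is tighter. The paper asserts that $\dim\mathcal K_\mu=\dim\mathcal K_{\mu'}$ \emph{immediately} gives the isomorphism, but this inference is not quite immediate: the dimension of the irreducible $\mathbb Z_2$-graded module of a complex Clifford algebra determines $\dim\germ c_1$ only up to a parity ambiguity (for instance $\dim\mathcal K_\mu=2$ occurs for both $\dim\germ c_1=1$ and $\dim\germ c_1=2$), so one must further observe that whether $(\pi,\rho^\pi,\mathcal H)\simeq(\pi,\rho^\pi,\mathrm\Pi\mathcal H)$ --- equivalently, the parity of $\dim\germ c_1$ --- is itself an invariant of $(\pi,\rho^\pi,\mathcal H)$, as the remark after Corollary~\ref{mycorollary} does. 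Your route sidesteps this: consistency together with $[\germ n_1,\germ n_1]=[\germ m_1,\germ m_1]\subseteq\germ m_0$ identifies $\mathrm B_\lambda$ as $\mu(Z)>0$ times the pullback along $\Phi|_{\germ n_1}$ of the positive-definite form on $\germ c_1$, so that $\ker(\Phi|_{\germ n_1})=\mathrm{rad}\,\mathrm B_\lambda$ and $\dim\germ c_1=\mathrm{rank}\,\mathrm B_\lambda$; since $\lambda$ and $\lambda'$ lie on the same coadjoint orbit the ranks coincide, and a nonzero Clifford-type nilpotent super Lie group is determined by this integer (with the degenerate cases $\lambda=0$ handled by Lemma~\ref{nonzeropolar}). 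The cost is a short extra computation; the benefit is that the conclusion follows without any appeal to parity-change invariance.
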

\begin{proof}
Part (a) is proved by induction on $\dim\germ n$. There are three cases to consider:

{\it Case I : $(N_0,\germ n)$ is not
reduced.} In this case $(\pi,\rho^\pi,\mathcal H)$ factors through 
the reduced form $(\overline N_0,\overline{\germ n})$ of $(N_0,\germ n)$,
and $\dim \overline{\germ n}<\dim \germ n$. Let us denote this 
representation of $(\overline N_0,\overline{\germ n})$ by 
$(\overline\pi,\rho^{\overline\pi},\mathcal H)$. By induction
hypothesis, 
there exists a polarizing system 
$(\overline M_0,\overline{\germ m},\overline\Phi,\overline C_0,
\overline{\germ c},\overline\lambda)$
of 
$(\overline N_0,\overline{\germ n})$ 
and a representation 
$(\sigma_{\mu},\rho^{\sigma_{\mu}},\mathcal K_{\mu})$
of $(\overline C_0,\overline{\germ c})$
which is consistent with
$(\overline M_0,\overline{\germ m},\overline\Phi,\overline C_0,
\overline{\germ c},\overline\lambda)$
such that 
$$
(\overline\pi,\rho^{\overline\pi},\mathcal H)=
\ind_{(\overline M_0,\overline{\germ m})}^{(\overline N_0,\overline{\germ n})}
(\sigma_{\mu}\circ\overline\Phi,\rho^{\sigma_{\mu}
\circ\overline\Phi},\mathcal K_{\mu}).
$$
Let 
$\mathsf q:(N_0,\germ n)\to(\overline N_0,\overline{\germ n})$ 
be the quotient map and
set
$$
(M_0,\germ m,\Phi,C_0,\germ c,\lambda)=(\mathsf q^{-1}(\overline M_0),
\mathsf q^{-1}(\overline{\germ m}),\overline\Phi\circ \mathsf q,
\overline C_0,\overline{\germ c},
\overline\lambda\circ \mathsf q).
$$ 
It is easily checked that
$(M_0,\germ m,\Phi,C_0,\germ c,\lambda)$
is a polarizing system of 
$(N_0,\germ n)$, and 
$(\sigma_{\mu},\rho^{\sigma_{\mu}},\mathcal K_{\mu})$
is consistent with
$(M_0,\germ m,\Phi,C_0,\germ c,\lambda)$.

{\it Case II : $(N_0,\germ n)$ is reduced and $\dim\mathcal Z(\germ n)>1$.}
Since the action of $\mathcal Z(\germ n)$ is via scalar 
multiplication \cite[Lemma 5]{vara},
it is easily seen that $(\pi,\rho^\pi,\mathcal H)$
factors through a representation of a quotient $(N'_0,\germ n')$
of $(N_0,\germ n)$ where the kernel of the quotient corresponds to a 
subalgebra of codimension one in $\mathcal Z(\germ n)$.
Again $\dim \germ n'<\dim\germ n$, 
and an argument similar to Case I above
applies.

{\it Case III : $(N_0,\germ n)$ is reduced and $\dim\mathcal Z(\germ n)=1$.}
In this case 
one of the statements of Proposition \ref{kirillovlemma} must hold. If statement
(b) of Proposition \ref{kirillovlemma}
holds, then by Proposition \ref{propcl} there is nothing left to prove. 
Next suppose that statement (a) of Proposition \ref{kirillovlemma}
holds. If the restriction of $\pi^\infty$ to $\mathcal Z(\germ n)$ is 
trivial, then an argument similar to Case II above applies.
If the restriction of $\pi^\infty$ to $\mathcal Z(\germ n)$ is 
not trivial, then from
Proposition \ref{induction-prop}
it follows that there exists an irreducible
unitary representation 
$(\sigma,\rho^\sigma,\mathcal K)$
of $(N'_0,\germ n')$
such that 
\begin{equation*}
(\pi,\rho^\pi,\mathcal H)=\ind_{(N'_0,\germ n')}^{(N_0,\germ n)}(\sigma,\rho^{\sigma},\mathcal K).
\end{equation*}
Here $(N'_0,\germ n')$ is the super Lie group 
identified by statement (a) of Proposition
\ref{kirillovlemma}.

By induction hypothesis, there exists a polarizing system 
$(M_0',\germ m',\Phi',C_0',\germ c',\lambda')$
of
$(N'_0,\germ n')$
and an irreducible unitary representation $(\sigma_\mu,\rho^{\sigma_\mu},\mathcal K_\mu)$
of $(C_0',\germ c')$
which is consistent with $(M_0',\germ m',\Phi',C_0',\germ c',\lambda')$ such that
$$
(\sigma,\rho^\sigma,\mathcal K)=
\ind_{(M'_0,\germ m')}^{(N'_0,\germ n')}
(\sigma_\mu\circ\Phi',\rho^{\sigma_\mu\circ\Phi'},\mathcal K_\mu).
$$
By Proposition \ref{indtransitivity}
we have 
\begin{equation}
\label{hopefullylast}
(\pi,\rho^\pi,\mathcal H)=
\ind_{(M'_0,\germ m')}^{(N_0,\germ n)}
(\sigma_\mu\circ\Phi',\rho^{\sigma_\mu\circ\Phi'},\mathcal K_\mu).
\end{equation}
Let $\tilde{\lambda}$ be an arbitrary $\mathbb R$-linear
extension of $\lambda'$ to $\germ n_0$, and set
$$
(M_0,\germ m,\Phi,C_0,\germ c,\lambda)=
(M_0',\germ m',\Phi',C_0',\germ c',\tilde{\lambda}).
$$
To show that $(M_0,\germ m,\Phi,C_0,\germ c,\lambda)$ is a polarizing system of 
$(N_0,\germ n)$, it suffices to check that $\germ m_0$ is a polarizing
subalgebra of $\germ n_0$ corresponding to ${\lambda}$. Let $X,Y,Z\in\germ n_0$ be 
chosen as in part (a) of Proposition \ref{kirillovlemma}.
From $Z\in\mathcal Z(\germ n'_0)$ and the fact that 
$\germ m_0$ is a polarizing
subalgebra of $\germ n'_0$ corresponding to $\lambda'$, 
it follows that $Z\in\germ m_0$. 
Hence for every $t\in\mathbb R$ and $v\in\mathcal K_\mu$,
$$
\Big(\sigma_\mu\circ\Phi\big(\exp(tZ)\big)\Big)v=e^{t\lambda(Z)\sqrt{-1}}v.
$$
Using (\ref{hopefullylast}) and the realization of induced representations 
given in Section \ref{realization} it is easy
to check that for every $t\in\mathbb R$ and $v\in\mathcal H$,
$$
\pi(\exp(tZ))v=e^{t\lambda(Z)\sqrt{-1}}v.
$$
Since the restriction of $\pi^\infty$ to $\mathcal Z(\germ n)$ is assumed to
be nontrivial, it follows that $\lambda(Z)\neq 0$.

Consider the skew-symmetric bilinear form
$$
\omega_\lambda:\germ n_0\times\germ n_0\to \mathbb R
$$
defined by  $\omega_\lambda(V,W)=\lambda([V,W])$, and
let 
$\omega'_\lambda$ be the restriction of ${\omega_\lambda}$
to $\germ n_0'\times \germ n_0'$.
Since $\germ m_0$ is a maximal isotropic subspace of
$\germ n_0'$, we have 
$$
\dim \germ m_0={1\over 2}(\dim\germ n_0'+\dim\germ s'_\lambda)
$$
where $\germ s'_\lambda$ is the radical of $\omega_\lambda'$.
To show that $\germ m_0$ is a maximal isotropic subspace of 
$\omega_\lambda$,
it suffices to prove that 
$$
\dim\germ s_\lambda=\dim\germ s'_\lambda-1$$
where
$\germ s_\lambda$ is the radical of $\omega_\lambda$.
Let $V\in\germ s_\lambda$, and write $V=aX+W$ where $a\in\mathbb R$ and 
$W\in\germ n_0'$. From $[Y,\germ n_0']=\{0\}$ it follows that
$$
\omega_\lambda(V,Y)=\lambda([V,Y])=a\lambda(Z)
$$
which implies that $a=0$, i.e. $V\in\germ n'_0$. Consequently, 
$\germ s_\lambda\subseteq \germ s'_\lambda$. Moreover, 
$[Y,\germ n_0']=\{0\}$ implies that $Y\in\germ s'_\lambda$, but 
$\lambda([X,Y])\neq 0$ implies that $Y\notin\germ s_\lambda$.
Thus $\dim\germ s_\lambda<\dim\germ s'_\lambda$, from which
it readily follows that $\dim\germ s_\lambda=\dim\germ s'_\lambda-1$.
Finally, verifying that 
$(\sigma_\mu,\rho^{\sigma_\mu},\mathcal K_\mu)$
is consistent with 
$(M_0,\germ m,\Phi,C_0,\germ c,\lambda)$
is trivial.

Next we prove part (b) of Theorem \ref{thmindpol}.
Suppose that $\chi:C_0\to\mathbb C^\times$ 
(respectively, $\chi':C_0'\to\mathbb C^\times$)
is the central character of $(\sigma_\mu,\rho^{\sigma_\mu},\mathcal K_\mu)$
(respectively, $(\sigma_{\mu'},\rho^{\sigma_{\mu'}},\mathcal K_{\mu'})$).
Since $\germ m_0$ is a polarizing subalgebra of 
$\germ n_0$ corresponding to $\lambda$ and
$(\sigma_\mu,\rho^{\sigma_\mu},\mathcal K_\mu)$
is consistent with $(M_0,\germ m,\Phi,C_0,\germ c,\lambda)$,
the representation 
$\ind_{M_0}^{N_0}\chi\circ\Phi$ is irreducible.
Since
$\pi=\ind_{M_0}^{N_0}\sigma_\mu\circ\Phi$, 
it follows that the unitary representation $(\pi,\mathcal H)$ of the 
nilpotent Lie group $N_0$ is a direct sum of 
$\dim \mathcal K_\mu$ copies of $\ind_{M_0}^{N_0}\chi\circ\Phi$.
With a similar argument, one can see that $(\pi,\mathcal H)$
is a direct sum of $\dim\mathcal K_{\mu'}$ copies of
the irreducible unitary representation 
$\ind_{M_0'}^{N_0}\chi'\circ\Phi'$. Consequently,
$\dim \mathcal K_\mu=\dim \mathcal K_{\mu'}$, which immediately implies 
that $(C_0,\germ c)$ and $(C_0',\germ c')$
are isomorphic. Moreover, we have
$$
\ind_{M_0}^{N_0}\chi\circ\Phi\simeq\ind_{M_0'}^{N_0}\chi'\circ\Phi'
$$
and Kirillov theory for nilpotent Lie
groups (e.g., \cite[Theorem 2.2.4]{corgr}) implies that 
$$
\lambda'=\mathrm{Ad}^*(n)(\lambda)
$$ 
for some $n\in N_0$.

\end{proof}
\begin{corollary}
\label{mycorollary}
Let $(\pi,\rho^\pi,\mathcal H)$ be an irreducible
unitary representation of a nilpotent
Lie supergroup $(N_0,\germ n)$, and $(\pi,\mathcal H)$ be the 
unitary representation of $N_0$ obtained as restriction of 
$(\pi,\rho^\pi,\mathcal H)$ to the even part. Then
there exists an irreducible unitary representation 
$(\sigma,\mathcal K)$ of $N_0$ 
such that $(\pi,\mathcal H)$ is a direct sum of $2^l$ 
copies of $(\sigma,\mathcal K)$, where $l$ is a 
nonnegative integer.
\end{corollary}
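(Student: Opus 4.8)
The plan is to read the statement off from the proof of part~(b) of Theorem~\ref{thmindpol}, adding only an elementary dimension count for the irreducible Clifford modules of Section~\ref{unitclifford}.

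First I would invoke part~(a) of Theorem~\ref{thmindpol} to fix a polarizing system $(M_0,\germ m,\Phi,C_0,\germ c,\lambda)$ and a consistent irreducible representation $(\sigma_\mu,\rho^{\sigma_\mu},\mathcal K_\mu)$ of $(C_0,\germ c)$ with
$$
(\pi,\rho^\pi,\mathcal H)=\ind_{(M_0,\germ m)}^{(N_0,\germ n)}(\sigma_\mu\circ\Phi,\rho^{\sigma_\mu\circ\Phi},\mathcal K_\mu).
$$
Let $\chi:C_0\to\mathbb C^\times$ be the central character of $(\sigma_\mu,\rho^{\sigma_\mu},\mathcal K_\mu)$. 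Since $\germ c_0$ is one-dimensional and $C_0$ is connected, the restriction of $\sigma_\mu$ to $C_0$ is $\chi$ times the identity operator of $\mathcal K_\mu$; hence $\sigma_\mu\circ\Phi$, regarded only as a unitary representation of the Lie group $M_0$ (forgetting the odd action), is a direct sum of $\dim\mathcal K_\mu$ copies of the one-dimensional unitary representation $\chi\circ\Phi$ of $M_0$.

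Next I would recall, exactly as in the proof of part~(b) of Theorem~\ref{thmindpol}, that because $\germ m_0$ is a polarizing subalgebra of $\germ n_0$ corresponding to $\lambda$ and $(\sigma_\mu,\rho^{\sigma_\mu},\mathcal K_\mu)$ is consistent with the polarizing system, the induced representation $\sigma:=\ind_{M_0}^{N_0}(\chi\circ\Phi)$ of $N_0$ is irreducible by classical Kirillov theory (for example \cite[Theorem 2.2.4]{corgr}). By the realization of induced representations in Section~\ref{realization}, the restriction of the special induced representation to $N_0$ is nothing but the ordinary induced representation $\ind_{M_0}^{N_0}(\sigma_\mu\circ\Phi)$, and ordinary induction commutes with direct sums; combining this with the previous paragraph shows that $(\pi,\mathcal H)$ is unitarily equivalent to a direct sum of $\dim\mathcal K_\mu$ copies of $(\sigma,\mathcal K)$.

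Finally I would check that $\dim\mathcal K_\mu$ is a power of $2$. If $\mu=0$ then $\mathcal K_\mu$ is one-dimensional, so $\dim\mathcal K_\mu=2^0$. If $\mu\neq 0$ then, by Section~\ref{unitclifford}, $\mathcal K_\mu$ is an irreducible $\mathbb Z_2$-graded module for the complex Clifford algebra $\mathcal U(\germ c)/\langle Z-a\rangle$ on $\dim\germ c_1$ generators, and by the classification of Clifford modules (\cite[Chapter 5]{lawson} or \cite[Lemma 11]{vara}) any such module has dimension $2^{\lceil(\dim\germ c_1)/2\rceil}$. In either case $\dim\mathcal K_\mu=2^l$ for a nonnegative integer $l$, which completes the argument. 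Most of this is already contained in the proof of Theorem~\ref{thmindpol}, so I do not expect a serious obstacle; the only points needing a little care are the identification of the $N_0$-restriction of the special induced representation with the classical induced representation, and the observation that $\sigma_\mu\circ\Phi|_{M_0}$ is literally a multiple of the scalar character $\chi\circ\Phi$.
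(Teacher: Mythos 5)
Your proposal is correct and follows essentially the same route as the paper: you restrict the induced picture from part (a) of Theorem~\ref{thmindpol} to $N_0$, use that $\sigma_\mu\circ\Phi|_{M_0}$ is $\dim\mathcal K_\mu$ copies of the scalar character $\chi\circ\Phi$, invoke classical Kirillov theory for the irreducibility of $\ind_{M_0}^{N_0}(\chi\circ\Phi)$, and finish with the Clifford-module dimension count $\dim\mathcal K_\mu=2^l$. The paper's own proof is a one-line citation of these same ingredients (via the proof of part (b) of Theorem~\ref{thmindpol}), so you have simply spelled out the details it leaves implicit.
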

\begin{proof}
Since special induction commutes with restriction
to the even part, this
follows immediately from part (a) of 
Theorem \ref{thmindpol} and the fact that $\dim\mathcal K_\mu=2^l$
for some $l\geq 0$.
\end{proof}

\noindent{\bf Remark.} Suppose that an irreducible unitary representation
$(\pi,\rho^\pi,\mathcal H)$ is given by (\ref{whatisit}).
If we set $\kappa(\pi,\rho^\pi,\mathcal H)=\dim \germ c$, then by 
part (b) of Theorem \ref{thmindpol} the positive 
integer $\kappa(\pi,\rho^\pi,\mathcal H)$ does not depend on the 
choice of the polarizing system and hence is an invariant of 
$(\pi,\rho^\pi,\mathcal H)$. 
In fact using Corollary \ref{thmindpol}
one can see that 
$\kappa(\pi,\rho^\pi,\mathcal H)$ can be obtained as follows. 
Consider the representation
$(\pi,\mathcal H)$ of the Lie group $N_0$
obtained by restriction of $(\pi,\rho^\pi,\mathcal H)$ to the 
even part of $(N_0,\germ n)$. 
The representation $(\pi,\mathcal H)$ 
is always a direct sum of $2^r$ copies of an irreducible 
unitary representation $(\pi',\mathcal H')$ of $N_0$, where 
$r$ is a nonnegative integer. In the latter case, we have 
$$
\kappa(\pi,\rho^\pi,\mathcal H)=\left\{
\begin{array}{ll}
2r\quad &\textrm{if }(\pi,\rho^\pi,\mathcal H)\simeq
(\pi,\rho^\pi,\mathrm\Pi\mathcal H),\\
2r+1\quad&\textrm{otherwise.}

\end{array}
\right.
$$
In particular, when $r=0$ the representation 
$(\pi,\rho^\pi,\mathcal H)$ is purely even and therefore 
$\kappa(\pi,\rho^\pi,\mathcal H)=1$.\\

\subsection{Irreducibility of codimension-one induction}
In this section we prove 
that induction from a polarizing system always
yields an irreducible unitary representation.

\begin{theorem}
\label{lastmainthm}
Let $(M_0,\germ m,\Phi,C_0,\germ c,\lambda)$ be a polarizing system of 
$(N_0,\germ n)$.
Suppose that
$(\sigma_\mu,\rho^{\sigma_\mu},\mathcal K_\mu)$ is 
the representation of 
$(C_0,\germ c)$ consistent with this polarizing system.
Then the unitary representation 
$$
(\pi,\rho^\pi,\mathcal H)=\ind_{(M_0,\germ m)}^{(N_0,\germ n)}
(\sigma_\mu,\rho^{\sigma_\mu},\mathcal K_\mu)
$$
is irreducible.
\end{theorem}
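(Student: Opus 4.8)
The plan is to verify, using \cite[Lemma 5]{vara}, that every even bounded self-intertwiner $T$ of $(\pi,\rho^\pi,\mathcal H)$ is scalar. Throughout I work in the function model of Section \ref{realization} (which applies since $N_0$ and $M_0$, being nilpotent, are unimodular).

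\emph{Step 1: identifying $\pi$ as a super-lift of an ordinary induced representation.} Combining the consistency relation (\ref{compathadi}) with (\ref{eqnabtmu}), one gets $(\sigma_\mu\circ\Phi)^\infty(W)=\lambda(W)\sqrt{-1}\cdot\mathrm{id}_{\mathcal K_\mu}$ for every $W\in\germ m_0$. Since $M_0$ is connected and $\germ m_0$ is isotropic for $\omega_\lambda$, this differential exponentiates, and $\sigma_\mu(\Phi(h))=\chi_\lambda(h)\,\mathrm{id}_{\mathcal K_\mu}$ for all $h\in M_0$, where $\chi_\lambda$ is the unitary character of $M_0$ with $\chi_\lambda(\exp W)=e^{\lambda(W)\sqrt{-1}}$. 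Consequently $\mathcal H$ is canonically the Hilbert space of $\tau\otimes\mathcal K_\mu$, where $\tau=\ind_{M_0}^{N_0}\chi_\lambda$ is an \emph{ordinary} unitary representation of $N_0$ and $\pi(n)=\tau(n)\otimes\mathrm{id}_{\mathcal K_\mu}$. Because $\germ m_0$ is a polarizing subalgebra of $\germ n_0$ at $\lambda$, classical Kirillov theory for nilpotent Lie groups \cite{corgr} shows that $\tau$ is irreducible.

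\emph{Step 2: the commutant acts fibrewise.} Irreducibility of $\tau$ gives $\ennd_{N_0}(\mathcal H)=\mathbb C\,\mathrm{id}\otimes\ennd_{\mathbb C}(\mathcal K_\mu)$, which in the function model means precisely that every bounded operator commuting with $\pi(N_0)$ has the form $(Tf)(n)=A\bigl(f(n)\bigr)$ for a fixed $A\in\ennd_{\mathbb C}(\mathcal K_\mu)$. Such a $T$ preserves $\mathcal H^{\infty,c}$ and $\mathcal H^\infty$; it is even exactly when $A$ is an even operator on $\mathcal K_\mu$; and, evaluating the intertwining relation $T\rho^\pi(V)f=\rho^\pi(V)Tf$ on $f\in\mathcal H^{\infty,c}$ at $n=e$, where $\rho^\pi$ is given by the explicit formula (\ref{oddpartaction}) and $f(e)$ may be prescribed arbitrarily in $\mathcal K_\mu$, one finds $A\,\rho^{\sigma_\mu}(\Phi(V))=\rho^{\sigma_\mu}(\Phi(V))\,A$ for every $V\in\germ n_1=\germ m_1$. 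Since $\Phi\colon\germ m_1\to\germ c_1$ is onto and $A$ also commutes with the scalar operators $\sigma_\mu^\infty(\germ c_0)$, the operator $A$ lies in the even part of the super-commutant of the Clifford-type action on $\mathcal K_\mu$.

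\emph{Step 3: Clifford endgame.} By Section \ref{unitclifford}, $\mathcal K_\mu$ is a $\mathbb Z_2$-graded irreducible module over the complex Clifford algebra $\mathcal U(\germ c)/\langle Z-a\rangle$ with $a=\mu(Z)\ge 0$. An even operator commuting with this action has $\mathbb Z_2$-graded kernel and image, hence is either $0$ or invertible; the algebra of such operators is therefore a finite-dimensional division algebra over $\mathbb C$, so it equals $\mathbb C$. Thus $A$, and with it $T$, is scalar, and $(\pi,\rho^\pi,\mathcal H)$ is irreducible. The point I expect to require the most care is Step 2: making rigorous, in the concrete function model, that irreducibility of $\tau$ forces every $N_0$-intertwiner of $\mathcal H$ to act pointwise on the fibre $\mathcal K_\mu$, and checking this is compatible with the smooth-vector subtleties governing $\rho^\pi$ so that the evaluation at $n=e$ is legitimate. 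Given this, Step 1 is bookkeeping with consistency and Kirillov theory, and Step 3 is formal.
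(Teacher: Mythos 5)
Your proof is correct, and it is a genuinely different (and shorter) route than the paper's. The paper argues by induction on $\dim\germ n$ with three cases (non-reduced; reduced with $\mathcal Z(\germ n)\cap\ker\lambda\neq 0$; and the remaining case), the first two passing to quotient superalgebras, and the last one invoking Kirillov's lemma to pass to a codimension-one $(N_0',\germ n')$, after a delicate normalization of $\lambda$ and a Fourier-transform intertwiner to arrange $\germ m\subseteq\germ n'$; the final scalarity of an intertwiner is then obtained fibrewise on $L^2(\mathbb R,\mathcal H'')$ using the classical lemmas of Corwin--Greenleaf. You instead observe at once that consistency forces $\sigma_\mu\circ\Phi$ to act on $M_0$ by the character $\chi_\lambda$ tensored with $\mathrm{id}_{\mathcal K_\mu}$, so that $\mathcal H\cong\tau\otimes\mathcal K_\mu$ with $\tau=\ind_{M_0}^{N_0}\chi_\lambda$ irreducible by ordinary Kirillov theory; the commutation theorem then localizes any $N_0$-intertwiner to $\mathrm{id}\otimes A$, evaluation of the odd intertwining relation at $n=e$ (which is legitimate since (\ref{oddpartaction}) holds verbatim on $\mathcal H^{\infty,c}$, $T$ preserves that space, and smooth functions are determined pointwise) forces $A$ into the even commutant of the Clifford action, and graded Schur kills that. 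What your approach buys is brevity and transparency, and in particular it isolates exactly how irreducibility reduces to the classical Kirillov statement plus Clifford Schur; it also sidesteps the case analysis and the conjugation/Fourier manipulations of Case III entirely. What the paper's inductive version buys is uniformity with the machinery used elsewhere (notably the codimension-one reduction of Section \ref{inductionresultfive} and the normalization tricks reused in Proposition \ref{threefive}), and it gives a proof that does not need the tensor-factorization of the induced space as a preliminary step. Both are sound.
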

\begin{proof}
We prove the theorem by induction on $\dim \germ n$. 
If $\lambda=0$, then $\germ m=\germ n$ and 
$\mathrm{ker}\,\Phi\supseteq\germ n_0$, which implies that
$\germ c=\{0\}$ and therefore $(\pi,\rho^\pi,\mathcal H)$
is the trivial representation. Without loss of generality,
from now on 
we assume that $\lambda\neq 0$.
There are
three cases to consider.

{\it Case I : $(N_0,\germ n)$ is not reduced.} 
Recall that $\germ a[\germ n]$ is a $\mathbb Z_2$-graded ideal of 
$\germ n$. Since $\germ a[\germ n]\neq\{0\}$, we have 
$\mathcal Z(\germ n)\cap \germ a[\germ n]\neq\{0\}$.
Indeed let $\germ e^{(0)}=\germ a[\germ n]$ and for any 
positive integer $j$, set $\germ e^{(j+1)}=[\germ n,\germ e^{(j)}]$.
Let $j_0=\min\{\ j\ |\ \germ e^{(j)}=\{0\}\ \}$. Then 
$\germ e^{(j_0-1)}\subseteq\germ a[\germ n]\cap\mathcal Z(\germ n)$.

Let $W\in\mathcal Z(\germ n)\cap\germ a[\germ n]$. 
Since $\mathcal Z(\germ n)\cap\germ a[\germ n]$ is $\mathbb Z_2$-graded,
we can choose $W$ suitably such that $W\in\germ n_0$ or $W\in\germ n_1$.
If $W\in\germ n_1$ then obviously $W\in\germ m_1$, and if $W\in\germ n_0$, then 
$W\in\germ m_0$ because otherwise 
$\germ m'_0=\germ m_0+\mathbb RW$ is a Lie subalgebra of 
$\germ n_0$ with the property that 
$\lambda([\germ m_0',\germ m_0'])=\{0\}$, and 
the latter implies that 
$(M_0,\germ m,\Phi,C_0,\germ c,\lambda)$ does not satisfy part (b) of 
Definition \ref{polarizing}. Therefore we have
shown that $\mathcal Z(\germ n)\cap\germ a[\germ n]\subseteq\germ m$.

Our next task is to show that $\Phi(W)=0$ for every 
$W\in\mathcal Z(\germ n)\cap\germ a[\germ n]$. Without
loss of generality, we can assume $W\in\germ n_0$ or $W\in\germ n_1$.
If $W\in\germ n_1$, then
we have $[W,W]=0$ which implies that $[\Phi(W),\Phi(W)]=0$. Since
$(C_0,\germ c)$ is reduced, we have $\Phi(W)=0$. If $W\in\germ n_0$
then for every $v\in\mathcal K_\mu$ we have
\begin{equation*}
\big(\sigma_\mu^\infty\circ\Phi(W)\big)v=\mu\circ\Phi(W)\sqrt{-1}\,v.
\end{equation*}
Since $\lambda\neq 0$, by Lemma \ref{nonzeropolar}
and surjectivity of $\Phi$ it follows 
that $\mu\neq 0$.
From the 
realization of induced representations 
given in Section 
\ref{realization} and the fact that 
$W\in\mathcal Z(\germ n)$
it is easily seen that for every $v\in\mathcal H$,
$$
\pi^\infty(W)v=\lambda(W)\sqrt{-1}\,v=\mu\circ\Phi(W)\sqrt{-1}\,v.
$$
If $\Phi(W)\neq 0$,  
then 
$\mu\circ\Phi(W)\neq 0$ from which it follows that
$\pi^\infty(W)\neq 0$, which contradicts the fact that 
by Proposition \ref{nilrad} we have  
$\pi^\infty(W)=0$.

Set $\germ s=\mathcal Z(\germ n)\cap\germ a[\germ n]$ and consider the super 
Lie group $(N'_0,\germ n')$ where $\germ n'=\germ n/\germ s$.
(Thus $N_0'=N_0/S_0$ where $S_0=\{\ \exp(tV)\ |\ V\in\germ s_0\ \}$.)
Obviously $(\pi,\rho^\pi,\mathcal H)$ factors through $(N_0',\germ n')$.
We denote this representation of $(N_0',\germ n')$ by 
$(\overline\pi,\rho^{\overline\pi},\mathcal H)$.
Moreover, 
$$
\mathcal Z(\germ n)\cap \germ a[\germ n]\subseteq\mathcal Z(\germ n)\subseteq\germ m.
$$
Since $\ker\Phi\cap\germ m_0=\ker\lambda\cap\germ m_0$ we have
$\mathcal Z(\germ n)\cap\germ a[\germ n]\cap\germ n_0\subseteq\ker\lambda$.
Therefore the polarizing system $(M_0,\germ m,\Phi,C_0,\germ c,\lambda)$ corresponds 
via the quotient map $\mathsf q:\germ n\to\germ n'$
to a polarizing system $(M'_0,\germ m',\Phi',C_0,\germ c,\lambda')$ of 
$(N'_0,\germ n')$.
Moreover, $(\sigma_\mu,\rho^{\sigma_\mu},\mathcal K_\mu)$
is consistent with $(M'_0,\germ m',\Phi',C_0,\germ c,\lambda')$.
We can express
$(\overline\pi,\rho^{\overline\pi},\mathcal H)$ as 
$$
(\overline\pi,\rho^{\overline\pi},\mathcal H)=\ind_{(M_0',\germ m')}^{(N_0',\germ n')}
(\sigma_\mu\circ\Phi',\rho^{\sigma_\mu}\circ\Phi',\mathcal K_\mu).
$$
Since $\dim\germ n'<\dim\germ n$, by the induction hypothesis it follows that
$(\overline\pi,\rho^{\overline\pi},\mathcal H)$
(and hence 
$(\pi,\rho^\pi,\mathcal H)$) is irreducible.

{\it Case II : $(N_0,\germ n)$ is reduced 
and $\mathcal Z(\germ n)\cap\ker\lambda\neq\{0\}$.} 
In this case $\mathcal Z(\germ n)\cap\ker\lambda$ is an ideal of 
$\germ n$, and the fact that $\germ m_0$ is a polarizing subalgebra of 
$\germ n_0$ corresponding to $\lambda$ implies that 
$\mathcal Z(\germ n)\subseteq\germ m_0$.
The representation $(\pi,\rho^\pi,\mathcal H)$ factors 
through $(N'_0,\germ n')$ where 
$$
\germ n'=\germ n/\mathcal Z(\germ n)\cap\ker\lambda,
$$
and the polarizing system 
$(M_0,\germ m,\Phi,C_0,\germ c,\lambda)$ corresponds 
via the quotient map $\mathsf q:\germ n\to\germ n'$
to a polarizing system $(M'_0,\germ m',\Phi',C_0,\germ c,\lambda')$ of 
$(N'_0,\germ n')$.
The rest of the argument is similar to 
Case I. 

{\it Case III : $(N_0,\germ n)$ is reduced and 
$\mathcal Z(\germ n)\cap\ker\lambda=\{0\}$.} 
It follows that $\dim\mathcal Z(\germ n)=1$, hence
one of the statements of Proposition \ref{kirillovlemma} should hold.
If statement (b) of Proposition  \ref{kirillovlemma} holds, then
there is essentially nothing left to prove. From now on we assume that
statement (a) of Proposition  \ref{kirillovlemma} holds. Let $X,Y,Z$, 
$\germ n'$, and  
$\germ w$ be as in part (a) of 
Proposition  \ref{kirillovlemma}. 

Our first task is to show that without loss of generality,
we can assume that $\lambda(Y)=0$ and $\lambda(Z)\neq 0$. Indeed
one can modify the choice of the polarizing system 
as follows. Since $\germ m_0$ is a 
polarizing Lie subalgebra of $\germ n_0$ corresponding to $\lambda$,
we should have $Z\in\germ m_0$, and since 
$\mathcal Z(\germ n)\cap\ker\lambda=\{0\}$, 
we should have 
$\lambda(Z)\neq 0$.
For every $n\in N_0$,
we have a polarizing system
$$
(\,nM_0n^{-1},\mathrm{Ad}(n)(\germ m),\Phi\circ\mathrm{Ad}(n^{-1}),
C_0,\germ c,
\mathrm{Ad}^*(n)(\lambda)\,)
$$ 
in $(N_0,\germ n)$
and one can see that  
$$
(\pi,\rho^\pi,\mathcal H)\simeq\ind_{(nM_0n^{-1},\mathrm{Ad}(n)(\germ m))}^{(N_0,\germ n)}
(\sigma_\mu\circ\Phi\circ\mathrm{Ad}(n^{-1}),\rho^{\sigma_\mu\circ\Phi\circ\mathrm{Ad}(n^{-1})},\mathcal K_\mu).
$$
In particular, if we set 
$n=\exp(t_\circ X)$ where $t_\circ={\lambda(Y)\over \lambda(Z)}$, then
$$
\big(\mathrm{Ad}^*(n)(\lambda)\big)(Y)=\lambda(Y-{\lambda(Y)\over\lambda(Z)}Z)=0.
$$
The condition 
$\mathcal Z(\germ n)\cap \ker\big(\mathrm{Ad}^*(n)(\lambda)\big)=\{0\}$ is easy to check
as well.

From now on we assume that $\lambda(Y)=0$ and $\lambda(Z)\neq 0$.
Our next task is to prove that without loss 
of generality we can also assume that
$\germ m\subseteq \germ n'$. Suppose, on the contrary, that 
$\germ m\nsubseteq \germ n'$. In this case we show that 
$(\pi,\rho^\pi,\mathcal H)$ 
is unitarily equivalent to a representation
$$
(\pi',\rho^{\pi'},\mathcal H')=\ind_{(M_0',\germ m')}^{(N_0,\germ n)}
(\sigma_{\mu}\circ\Phi',\rho^{\sigma_{\mu}\circ\Phi'},\mathcal K_{\mu})
$$
where $(\sigma_{\mu},\rho^{\sigma_{\mu}},\mathcal K_{\mu})$ is
consistent with a polarizing system 
$(M_0',\germ m',\Phi',C_0,\germ c,\lambda)$ which satisfies 
$\germ m'\subseteq\germ n'$.
To this end, first note that in part (a) 
of Proposition  \ref{kirillovlemma}, we can choose $X$  
such that 
$\lambda(X)=0$ and 
$$
\germ m=\mathbb R X\oplus\mathbb RZ\oplus\germ w'_0\oplus\germ n_1
$$
where $\germ w_0'$ is a subspace of $\germ n_0'$ such that $\lambda(\germ w_0')=0$.
Indeed since $\germ m\nsubseteq \germ n'$, 
we can choose $X$ such that $X\in\germ m_0$. If $\lambda(X)\neq 0$, then since
$Z\in\germ m$ and 
$\lambda(Z)\neq 0$
we can substitute $X$ by $X-{\lambda(X)\over\lambda(Z)}Z$.
In a similar fashion we can choose a
complement $\germ w_0'$ to $\mathbb RZ$ in $\germ m\cap\germ n'$ which is included
in $\ker\lambda$.
Next note that $Y\notin\germ m_0$ because otherwise
$\lambda([X,Y])=\lambda(Z)\neq 0$ which contradicts the fact
that $\germ m_0$ is a polarizing subalgebra of $\germ n_0$ 
corresponding to $\lambda$. 
 Consider the subalgebra $\germ m'$ of $\germ n'$ defined
by
$$
\germ m'=\mathbb RY\oplus\mathbb RZ\oplus\germ w_0'\oplus\germ n_1.
$$
To show that $\germ m'$ is a subalgebra of $\germ n'$, note that
\begin{equation}
\label{msefrmsefr}
[\germ m'_0,\germ m'_0]\subseteq[\germ w_0',\germ w_0']
\subseteq\germ m_0\cap\germ n_0'\subsetneq\germ m'_0
\end{equation}
and
$$
[\germ n_1,\germ n_1]\subseteq\germ n_0'\cap\germ m_0\subsetneq\germ m_0'.
$$
Let $M'_0$ be the Lie subgroup of $N_0$ corresponding
to $\germ m'_0$. We define 
$$
\Phi':(M_0',\germ m')\to (C_0,\germ c)
$$
as follows.
For every $W\in\mathbb RZ\oplus\germ w_0'\oplus\germ n_1$
and $W'\in\mathbb RY$ we set
\begin{equation*}
\Phi'(W+W')=\Phi(W).
\end{equation*}
We now prove that $(M_0',\germ m',\Phi',C_0,\germ c,\lambda)$ is a polarizing system and
$(\sigma_{\mu},\rho^{\sigma_{\mu}},\mathcal K_{\mu})$ is consistent with
it. 
From a calculation similar to (\ref{msefrmsefr}) it follows that
$$
\lambda([\germ m_0',\germ m_0'])\subseteq\lambda([\germ m_0,\germ m_0])=\{0\}.
$$
Moreover, $Y\notin\germ m_0$ because otherwise we have $Z\in[\germ m_0,\germ m_0]$
and $\lambda(Z)\neq 0$ which contradicts the fact that $\germ m_0$ is a polarizing subalgebra of $\germ n_0$ corresponding to $\lambda$. 
Therefore we have $\dim\germ m_0'=\dim\germ m_0$, which
implies that $\germ m_0'$ is a polarizing subalgebra of $\germ n_0$. 
Using $[Y,\germ n']=\{0\}$ it is easy to check that
part (c) of Definition \ref{polarizing} holds. Part (d) 
of Definition \ref{polarizing} follows from 
$\lambda(X)=\lambda(Y)=0$ and $\lambda(Z)\neq 0$.
Finally, one can check that 
$(\sigma_{\mu},\rho^{\sigma_{\mu}},\mathcal K_{\mu})$
is consistent with $(M_0',\germ m',\Phi',C_0,\germ c,\lambda)$.

To prove that $(\pi,\rho^\pi,\mathcal H)\simeq(\pi',\rho^{\pi'},\mathcal H')$,
it suffices to show that 
\begin{equation}
\label{twoindeql}
\ind_{(M_0,\germ m)}^{(M_0'',\germ m'')}
(\sigma_{\mu}\circ\Phi,\rho^{\sigma_{\mu}\circ\Phi},\mathcal K_{\mu})
\simeq
\ind_{(M_0',\germ m')}^{(M_0'',\germ m'')}
(\sigma_{\mu}\circ\Phi',\rho^{\sigma_{\mu}\circ\Phi'},\mathcal K_{\mu})
\end{equation}
where $M_0''=M_0M_0'$ and 
$\germ m''=\germ m+\germ m'$, i.e.,
$$
\germ m''=\mathbb RX\oplus\mathbb RY\oplus\mathbb RZ\oplus\germ w_0'\oplus\germ n_1.
$$
Since $\germ m''$ is $\mathbb Z_2$-graded, we can express it as
$\germ m''=\germ m''_0\oplus\germ m''_1$.
Observe that the vector space 
$\germ w_0'$ is in fact an ideal of $\germ m_0''$. 
To prove the latter statement, note that since $\germ m_0$ and $\germ m_0'$
are polarizing subalgebras of $\germ n_0$ corresponding to $\lambda$, we 
should have
$$
[\germ m_0,\germ m_0]\subseteq\mathbb RX\oplus \germ w_0'
\textrm{\ \ and\ \ }
[\germ m_0',\germ m_0']\subseteq\mathbb RY\oplus \germ w_0'
$$
which imply that $\mathbb RX\oplus \germ w_0'$ and 
$\mathbb RY\oplus \germ w_0'$ are Lie subalgebras of $\germ n_0$.
Since 
$$
\germ w_0'=(\mathbb RX\oplus\germ w_0')\cap(\mathbb RY\oplus\germ w_0'),
$$
the vector space $\germ w_0'$
is in fact a Lie subalgebra of both of $\mathbb RX\oplus \germ w_0'$ and 
$\mathbb RY\oplus \germ w_0'$. But in a nilpotent Lie algebra, any Lie 
subalgebra
of codimension one is an ideal. Therefore $\germ w_0'$ is an ideal in
both 
$\mathbb RX\oplus \germ w_0'$ and 
$\mathbb RY\oplus \germ w_0'$. It follows that $\germ w_0'$ is an ideal
in the Lie algebra generated by $\mathbb RX\oplus \germ w_0'$ and 
$\mathbb RY\oplus \germ w_0'$, i.e., in 
$\germ m_0''=\mathbb RX\oplus\mathbb RY\oplus\mathbb RZ\oplus\germ w_0'$.

Next we obtain the unitary equivalence of (\ref{twoindeql}). 
Let $E_0=\{\ \exp(tZ)\ |\ t\in\mathbb R\ \}$ and $\chi:E_0\to\mathbb C^\times$
be the unitary character given by 
$$
\chi(\exp(tZ))=e^{t\lambda(Z)\sqrt{-1}}.
$$
If $(\pi_L,\rho^{\pi_L},\mathcal H_L)$ 
denotes the representation on the left hand side of
(\ref{twoindeql}), then 
we can realize $\mathcal H_L$ as $L^2(\mathbb R,\mathcal K_\mu)$
such that the action of $(M_0'',\germ m'')$ is given as follows. 
For every $y,t\in\mathbb R$, $W'\in\germ w_0'$, and $f\in L^2(\mathbb R,\mathcal K_\mu)$,
we have
\begin{eqnarray*}
\big(\pi_L(\exp(tX))f\big)(y)&=&\chi(ty)f(y)\\
\big(\pi_L(\exp(tY))f\big)(y)&=&f(y+t)\\
\big(\pi_L(\exp(tZ))f\big)(y)&=&\chi(t)f(y)\\
\big(\pi_L(\exp(W')f\big)(y)&=&f(y).
\end{eqnarray*}
Moreover, if $f\in L^2(\mathbb R,\mathcal K_\mu)$ is in the Schwartz space 
then from $[Y,\germ n_1]=\{0\}$ it follows that for every 
$W\in\germ n_1$ and $y\in\mathbb R$ we have
$$
(\rho^{\pi_L}(W))(y)=\Phi\big(\exp(yY)\cdot W\big)\big(f(y)\big)=
\Phi(W)\big(f(y)\big).
$$
Similarly, if $(\pi_R,\rho^{\pi_R},\mathcal H_R)$ 
denotes the representation on the right hand side of 
(\ref{twoindeql}), then $(\pi_R,\rho^{\pi_R},\mathcal H_R)$ can also
be realized on $L^2(\mathbb R,\mathcal K_\mu)$ as follows.
For every $x,t\in\mathbb R$, $W'\in\germ w_0'$, and $f\in L^2(\mathbb R,\mathcal K_\mu)$,
we have
\begin{eqnarray*}
\big(\pi_R(\exp(tX))f\big)(x)&=&f(x+t)\\
\big(\pi_R(\exp(tY))f\big)(x)&=&\chi(-tx)f(x)\\
\big(\pi_R(\exp(tZ))f\big)(x)&=&\chi(t)f(x)\\
\big(\pi_R(\exp(W')f\big)(x)&=&f(x).
\end{eqnarray*}
Moreover, if
$f\in L^2(\mathbb R,\mathcal K_\mu)$
is indeed in the Schwartz space, then for every $W\in\germ n_1$ and 
$x\in\mathbb R$ we have
$$
(\rho^{\pi_R}(W))(x)=\Phi'\big(\exp(xX)\cdot W\big)
\big(f(x)\big)=\Phi(W)\big(f(x)\big)
$$
where the last equality follows from the fact that $\Phi(X)=0$ and thus
\begin{eqnarray*}
\Phi'(\exp(xX)\cdot W)&=&\Phi'(W+[X,W]+{1\over 2}[X,[X,W]]+\cdots)\\
&=&\Phi(W+[X,W]+{1\over 2}[X,[X,W]]+\cdots)\\
&=&\Phi(W)+[\Phi(X),\Phi(W)]+{1\over 2}[\Phi(X),[\Phi(X),\Phi(W)]]+\cdots\\
&=&\Phi(W).
\end{eqnarray*}
It is now easy to check that the 
isometry $T:\mathcal H_L\to\mathcal H_R$
which intertwines $(\pi_L,\rho^{\pi_L},\mathcal H_L)$
and $(\pi_R,\rho^{\pi_R},\mathcal H_R)$
is given by the Fourier transform, i.e., 
$$
Tf(x)=\int_{-\infty}^\infty\chi(xy)f(y)dy.
$$

We now complete the proof of Case III.
The proof closely follows an argument that is given in \cite[p. 63]{corgr}.
Recall that as shown above, we can assume that $\germ m\subseteq\germ n'$.
It follows that $(M_0,\germ m,\Phi,C_0,\germ c,\lambda)$ is a polarizing system 
in $(N_0',\germ n')$. Since $\dim\germ n'<\dim\germ n$, by induction hypothesis
the representation 
\begin{equation}
\label{zegond}
(\pi'',\rho^{\pi''},\mathcal H'')=\ind_{(M_0,\germ m)}^{(N_0',\germ n')}
(\sigma_\mu\circ\Phi,\rho^{\sigma_\mu\circ\Phi},\mathcal K_\mu)
\end{equation}
is irreducible. Since $Z\in\mathcal Z(\germ n')$, 
by \cite[Lemma 5]{vara} there exists a real number $b\in\mathbb R$ such that
for every $t\in\mathbb R$ and $v\in\mathcal H''$ we have
$$
\pi''(\exp(tZ))v=e^{tb\sqrt{-1}}v.
$$
Recall that $\lambda(Z)\neq 0$ and $\lambda(Y)=0$.
Since
$Z\in\mathcal Z(\germ n)\cap \germ n_0'\subseteq\germ m_0$,
from (\ref{zegond}) and the realization
of the induced representation (see Section \ref{realization}) it
follows that 
$$
\pi''(\exp(tZ))v=e^{t\lambda(Z)\sqrt{-1}}v
$$ 
and therefore 
$b\neq 0$.
Next observe that
$$
(\pi,\rho^\pi,\mathcal H)=\ind_{(N_0',\germ n')}^{(N_0,\germ n)}
(\pi'',\rho^{\pi''},\mathcal H'')
$$
and by Section \ref{realization} 
we can assume $\mathcal H=L^2(\mathbb R,\mathcal H'')$ where 
for every $f\in L^2(\mathbb R,\mathcal H'')$, 
$s\in\mathbb R$, $t\in\mathbb R$,
and $n\in N_0'$ we have
\begin{eqnarray}
\label{txaction}
\big(\pi(\exp(tX))f\big)(s)&=&f(s+t)
\end{eqnarray}
and
\begin{eqnarray*}
(\pi(n)f)(s)&=&\pi''\big(\exp(sX)n\exp(-sX)\big)\big(f(s)\big).
\end{eqnarray*}
In particular, since $\spn_\mathbb R\{X,Y,Z\}$ is a Heisenberg
Lie algebra, we have
$$
\big(\pi(\exp(tY))f\big)(s)=e^{stb\sqrt{-1}}f(s).
$$
Moreover, if $f\in L^2(\mathbb R,\mathcal H'')$ is a smooth vector for the 
action of $\pi$ and has compact support, then 
for every $W\in\germ n_1$ and $s\in\mathbb R$ we have 
$$
\big(\rho^\pi(W)f\big)(s)=\rho^{\pi''}(\exp(sX)\cdot W)\big(f(s)\big).
$$
Let $T:L^2(\mathbb R,\mathcal H'')\to L^2(\mathbb R,\mathcal H'')$
be a bounded even linear operator which intertwines $(\pi,\rho^\pi,\mathcal H)$
with itself.
To complete the proof of Case III, it suffices to show that $T$ is a scalar multiple
of the identity. 
From \cite[Lemma 2.3.3]{corgr}, \cite[Lemma 2.3.2]{corgr} and 
\cite[Lemma 2.3.1]{corgr} it follows that there exists a family
$\{T_t\}_{t\in\mathbb R}$ of even linear operators $T_t:\mathcal H''\to\mathcal H''$
such that $||T_t||\leq||T||$ for every $t\in\mathbb R$, and
for every $f\in L^2(\mathbb R,\mathcal H'')$ we have 
$Tf\,(t)=T_t(f(t))$. One can check that $T_t$ intertwines the action
of the representation $(\pi''_t,\rho^{\pi''_t},\mathcal H'')$
of $(N'_0,\germ n')$ which is defined
by 
$$
\pi''_t(n)=\pi''\big(\exp(tX)n\exp(-tX)\big)\textrm{\ \ \ and\ \ \ }
\rho^{\pi''_t}(W)=
\rho^{\pi''}\big(\exp(tX)\cdot W\big).
$$
But $(\pi''_t,\rho^{\pi''_t},\mathcal H'')$ is irreducible, 
and from \cite[Lemma 5]{corgr} it follows that for every $t\in\mathbb R$, the operator
$T_t$ is multiplication by a scalar $\gamma(t)$. From (\ref{txaction}) it follows that $\gamma(t)$ does not depend on $t$,
i.e., $T$ is a scalar multiple of identity.

\end{proof}


\subsection{Existence of suitable polarizing subalgebras}
\label{suitable}

In this section 
$\germ n=\germ n_0\oplus\germ n_1$ will be a nilpotent Lie superalgebra. 
In this technical section we prove the existence of a special kind of
polarizing subalgebras in $\germ n_0$. The main goal of our 
fairly complicated arguments is to prove 
Lemma \ref{existenceofevenpart}.

For every $\lambda\in\germ n_0^*$ we consider the 
symmetric bilinear form 
$$
\mathrm B_\lambda:\germ n_1\times \germ n_1\to\mathbb R
$$
defined by $\mathrm B_\lambda(X,Y)=\lambda([X,Y])$. 
We denote the radical of $\mathrm B_\lambda$ by $\germ r_\lambda$.

\begin{lemma}
\label{radicalelements}
Suppose $\lambda\in\germ n_0^*$ 
and $\mathrm B_\lambda$  
is nonnegative definite. If $X\in\germ n_1$ is an
isotropic vector, i.e., it satisfies 
$\mathrm B_\lambda(X,X)=0$, then $X\in\germ r_\lambda$.
\end{lemma}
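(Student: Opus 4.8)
The plan is to deduce this from the degenerate case of the Cauchy--Schwarz inequality, using only nonnegativity of $\mathrm B_\lambda$ restricted to the lines through $X$. Fix an arbitrary $Y\in\germ n_1$ and consider, for $t\in\mathbb R$, the real number $\mathrm B_\lambda(X+tY,X+tY)$. Expanding by bilinearity and symmetry of $\mathrm B_\lambda$, this equals $\mathrm B_\lambda(X,X)+2t\,\mathrm B_\lambda(X,Y)+t^{2}\,\mathrm B_\lambda(Y,Y)$, and since $\mathrm B_\lambda(X,X)=0$ by hypothesis it simplifies to $2t\,\mathrm B_\lambda(X,Y)+t^{2}\,\mathrm B_\lambda(Y,Y)$.

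Because $\mathrm B_\lambda$ is nonnegative definite, this expression is $\geq 0$ for every $t\in\mathbb R$. Now suppose toward a contradiction that $\mathrm B_\lambda(X,Y)\neq 0$. Then for $t$ with sufficiently small absolute value the linear term $2t\,\mathrm B_\lambda(X,Y)$ dominates the quadratic term $t^{2}\,\mathrm B_\lambda(Y,Y)$ in absolute value; choosing in addition the sign of $t$ opposite to that of $\mathrm B_\lambda(X,Y)$ makes $2t\,\mathrm B_\lambda(X,Y)+t^{2}\,\mathrm B_\lambda(Y,Y)$ strictly negative, contradicting nonnegative definiteness. Hence $\mathrm B_\lambda(X,Y)=0$.

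Since $Y\in\germ n_1$ was arbitrary, $X$ annihilates all of $\germ n_1$ under $\mathrm B_\lambda$, i.e.\ $X\in\germ r_\lambda$, which is the assertion. I do not expect any genuine obstacle here; the only point requiring minor care is that this is precisely the statement that isotropic vectors of a nonnegative definite symmetric form lie in its radical, so the argument must not invoke anything beyond nonnegativity of $\mathrm B_\lambda$ on the two-dimensional span of $X$ and $Y$.
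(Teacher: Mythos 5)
Your proof is correct and takes essentially the same approach as the paper: both expand the quadratic form $\mathrm B_\lambda(X+tY,X+tY)$, use $\mathrm B_\lambda(X,X)=0$ to reduce to a linear-plus-quadratic expression in $t$, and observe that a nonzero linear coefficient would allow the form to become negative for a suitable small $t$. The only difference is that you spell out the choice of $t$ a bit more explicitly than the paper does.
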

\begin{proof}
Suppose, on the contrary, that there exists an element $Y\in\germ n_1$
such that 
$\lambda([X,Y])\neq 0$. Then 
for every $s\in\mathbb R$ we have
$$
\lambda([X+sY,X+sY])=\lambda([X,X])+2s\lambda([X,Y])+s^2\lambda([Y,Y]).
$$
Since $\lambda([X,X])=0$, one can find an $s\in\mathbb R$ such that 
$\lambda([X+sY,X+sY])<0$, which contradicts the fact that 
$\mathrm B_\lambda$ is nonnegative definite.

\end{proof}

In the rest of this section we fix $\lambda\in\germ n_0^*$ such that $\mathrm B_\lambda$
is nonnegative definite.
Suppose that $\germ n$ has a subalgebra 
$\germ n'=\germ n'_0\oplus\germ n'_1$ where $\germ n_0'=\germ n_0$ and  
$\dim \germ n_1'=\dim\germ n_1-1$.
Then it is easily checked that $\germ n'$ is indeed an ideal 
of $\germ n$ and $[\germ n,\germ n]\subset\germ n'$.
As a vector space, we can write $\germ n_1$ as a direct sum 
\begin{equation}
\label{directsm}
\germ n_1=\germ n_1'\oplus\mathbb RA
\end{equation}
for some $A\in\germ n_1$, and without loss of generality we can 
assume that $A$ is chosen suitably such that
\begin{equation}
\label{orthogg}
\mathrm B_\lambda(A,\germ n_1')=0.
\end{equation}
In the rest of this section we fix such an $A\in\germ n_1$.

\begin{lemma}
\label{complicatedlemma}
Let $E,F\in\germ n_1$. 
Suppose that 
\begin{itemize}
\item[(a)] $\mathrm B_\lambda([[A,E],[[A,E],F]]\,,\,[[A,E],[[A,E],F]])=0$,
\item[(b)] For every $G\in\germ n_1$, we have 
$\mathrm B_\lambda(X_G,X_G)=0$ where
$$
X_G=[[A,[A,[F,[F,[A,E]]]]],G].
$$
\end{itemize}
Then 
$\mathrm B_\lambda([[A,E],F],[[A,E],F])=0$.
\end{lemma}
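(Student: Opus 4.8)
The plan is to abbreviate the nested brackets in the statement by setting $u=[A,E]\in\germ n_0$, $v=[u,F]=[[A,E],F]\in\germ n_1$, $z=[v,F]\in\germ n_0$, $p=[A,z]\in\germ n_1$, and $w=[A,[A,[F,[F,u]]]]\in\germ n_0$. Chasing super-antisymmetry gives $[F,u]=-v$, then $[F,[F,u]]=-z$, and hence the identity $[A,p]=[A,[A,z]]=-w$, which will be used. Applying Lemma~\ref{radicalelements} to hypothesis~(a) (which asserts that $[u,v]=[[A,E],[[A,E],F]]$ is isotropic) gives $[u,v]\in\germ r_\lambda$, and applying it to hypothesis~(b) gives $[w,G]\in\germ r_\lambda$ for every $G\in\germ n_1$; these two facts are the only way the hypotheses will enter. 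The goal is $\mathrm B_\lambda(v,v)=\lambda([v,v])=0$.

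First I would reduce $\mathrm B_\lambda(v,v)$ to an even bracket. Since $\ad_u$ is an even derivation, $[[u,v],F]=[u,[v,F]]-[v,[u,F]]=[u,z]-[v,v]$; pairing with $\lambda$ and using $[u,v]\in\germ r_\lambda$ gives $\mathrm B_\lambda(v,v)=\lambda([u,z])$. Next I would expand $\lambda([u,z])=\lambda([[A,E],z])$ by the graded Leibniz rule for $\ad_A$: one resulting term is $\mathrm B_\lambda(A,[E,z])$, which vanishes by~(\ref{orthogg}) because $[E,z]\in[\germ n,\germ n]\cap\germ n_1\subseteq\germ n_1'$, and the other is, up to an irrelevant sign, $\mathrm B_\lambda(p,E)$. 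Thus $\mathrm B_\lambda(v,v)=\pm\,\mathrm B_\lambda(p,E)$, and it now suffices to prove $p\in\germ r_\lambda$, for then $\mathrm B_\lambda(p,E)=0$ by the definition of the radical.

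By Lemma~\ref{radicalelements} it is enough to show $\mathrm B_\lambda(p,p)=\lambda([p,p])=0$. Expanding $\lambda([[A,z],p])$ with the graded Leibniz rule for $\ad_A$ exactly as above, the term pairing $A$ against $[z,p]$ dies by~(\ref{orthogg}) (again $[z,p]\in[\germ n,\germ n]\cap\germ n_1\subseteq\germ n_1'$), leaving, up to sign, $\lambda([z,[A,p]])=-\lambda([z,w])=\lambda([w,z])$ after using $[A,p]=-w$. Finally --- and this is the point at which hypothesis~(b) is indispensable --- since $z=[v,F]$, the even derivation $\ad_w$ gives $\lambda([w,z])=\lambda([w,[v,F]])=\lambda([[w,v],F])+\lambda([v,[w,F]])$, and both $[w,v]$ and $[w,F]$ lie in $\germ r_\lambda$ by hypothesis~(b), so both summands vanish. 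Hence $\mathrm B_\lambda(p,p)=0$, so $p\in\germ r_\lambda$, so $\mathrm B_\lambda(p,E)=0$, and therefore $\mathrm B_\lambda(v,v)=0$.

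The delicate part is purely bookkeeping: keeping straight which iterated brackets are even and which are odd, carrying the super-signs correctly through the Jacobi/Leibniz manipulations, and in particular verifying $[A,p]=-w$. The one structural observation making the argument work is that $z=[v,F]$ is itself a bracket of odd elements, which is precisely what lets condition~(b) --- a statement about the even element $w$ acting on $\germ n_1$ --- be applied to $\lambda([w,z])$.
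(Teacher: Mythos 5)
Your proof is correct and follows essentially the same chain of reductions as the paper's: you reduce $\mathrm B_\lambda(v,v)$ first to $\lambda([u,z])$ via hypothesis~(a), then to $\mathrm B_\lambda(p,E)$ via~(\ref{orthogg}), then show $p\in\germ r_\lambda$ by reducing $\mathrm B_\lambda(p,p)$ to $\pm\lambda([w,z])$ via~(\ref{orthogg}) again, and finally kill $\lambda([w,z])$ with hypothesis~(b) --- matching the paper's equations (\ref{first})--(\ref{fiffth}) step for step, just written with the graded Leibniz rule instead of the cyclic super-Jacobi identity and with named intermediate elements $u,v,z,p,w$ in place of the paper's $Y,P$. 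The only real presentational gain is that the sign-checks (in particular $[F,[F,u]]=-z$ and $[A,p]=-w$, which are the paper's $P=-z$ and $[A,P]=-p$ in disguise) become transparent.
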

\begin{proof}
Set $Y=[A,E]$. Our goal is to prove that
$$
\mathrm B_\lambda([F,Y],[F,Y])=0.
$$
Observe that by the Jacobi identity we have 
\begin{equation}
\label{first}
\lambda([[F,Y],[F,Y]])-\lambda([F,[Y,[F,Y]])+\lambda([Y,[[F,Y],F]])=0.
\end{equation}
Set $P=[[F,Y],F]$. Then 
$$
\lambda([Y,[[F,Y],F]])=
\lambda([[A,E],P])=
-\lambda([P,[A,E]])
$$ 
and by the Jacobi identity we have  
\begin{equation}
\label{second}
\lambda([P,[A,E]])+\lambda([A,[E,P]])-\lambda([E,[P,A]])=0.
\end{equation}
Since $[E,P]\in\germ n_1'$, 
by (\ref{orthogg})
we have $\lambda([A,[E,P]])=0$. To complete the proof of the lemma it suffices to prove that 
$\lambda([F,[Y,[F,Y]])=0$ and 
$\lambda([E,[P,A]])=0$.
By Lemma \ref{radicalelements}
it suffices to show that 
$$
[Y,[Y,F]]\textrm{\ \  and\ \  }[A,P]=[A,[F,[F,Y]]]
$$ 
are 
isotropic vectors for $\mathrm B_\lambda$. 
For $[Y,[Y,F]]$, the latter statement 
is assumption (a) of the lemma.
Next we prove that 
$$
\mathrm B_\lambda([A,P],[A,P])=0.
$$
By the Jacobi identity
we have
\begin{equation}
\label{third}
\lambda([[A,P],[A,P]])-\lambda([A,[P,[A,P]]])+
\lambda([P,[[A,P],A]])=0.
\end{equation}
Since 
$[P,[A,P]]\in\germ n_1'$, 
by (\ref{orthogg})
we have 
\begin{equation}
\label{threedemi}
\lambda([A,[P,[A,P]]])=0.
\end{equation}
By the Jacobi identity we have
\begin{eqnarray}
\notag
\lambda([[A,[A,P]],[F,[F,Y]]])&+&\lambda([F,[[F,Y],[A,[A,P]]]])\\
&-&\lambda([[F,Y],[[A,[A,P]],F]])=0.
\label{fourth}
\end{eqnarray}
Next observe that
$$
\lambda([F,[[F,Y],[A,[A,P]]]])=
-\lambda([F,[[A,[A,P]],[F,Y]]])
$$
and
$
[A,[A,P]]=[A,[A,[F,[F,[A,E]]]]]
$. Therefore from
assumption (b) of Lemma
\ref{complicatedlemma}
and Lemma \ref{radicalelements} it follows that
$$
\lambda([F,[[A,[A,P]],[F,Y]]])=0.
$$
A similar argument proves that
$$
\lambda([[F,Y],[[A,[A,P]],F]])=0.
$$
The last two equalities, together with
(\ref{fourth}), imply that 
\begin{equation}
\label{fiffth}
\lambda([[A,[A,P]],[F,[F,Y]]])=0.
\end{equation}
But
$$
[[A,[A,P]],[F,[F,Y]]]=[[A,[A,P]],P]=-[P,[A,[A,P]]]
$$
and therefore from (\ref{third}), (\ref{threedemi}), and (\ref{fiffth})
it follows that 
$$
\mathrm B_\lambda([A,P],[A,P])=0
$$ which
completes the proof.

\end{proof}
\begin{lemma}
\label{aef}
Let $E,F\in\germ n_1$. 
Then 
$\mathrm B_\lambda([[A,E],F],[[A,E],F]])=0$.
\end{lemma}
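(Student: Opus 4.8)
The plan is to prove the lemma by downward induction along the lower central series of $\germ n$, using Lemma \ref{complicatedlemma} to trade one instance of the statement for two instances that lie strictly deeper in that series.

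Write $\germ n^{(1)}=\germ n$ and $\germ n^{(k+1)}=[\germ n,\germ n^{(k)}]$, so that $[\germ n,\germ n^{(k)}]=\germ n^{(k+1)}$, each $\germ n^{(k)}$ is an ideal with $\germ n^{(k+1)}\subseteq\germ n^{(k)}$, and, since $\germ n$ is nilpotent, $\germ n^{(c+1)}=\{0\}$ for some positive integer $c$. For $1\leq j\leq c+1$ let $(\star_j)$ be the assertion: for all $E,F\in\germ n_1$ with $[[A,E],F]\in\germ n^{(j)}$ one has $\mathrm B_\lambda([[A,E],F],[[A,E],F])=0$. Since $[[A,E],F]$ always lies in $\germ n=\germ n^{(1)}$, the assertion $(\star_1)$ is precisely the lemma, while $(\star_{c+1})$ holds trivially because then $[[A,E],F]=0$. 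I would therefore establish the implication $(\star_{j+1})\Rightarrow(\star_j)$ and conclude by descending from $j=c$ to $j=1$.

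For the inductive step, assume $(\star_{j+1})$ and fix $E,F\in\germ n_1$ with $[[A,E],F]\in\germ n^{(j)}$; the idea is to verify hypotheses (a) and (b) of Lemma \ref{complicatedlemma} and then apply that lemma. For (a): since $F_1:=[[A,E],F]$ lies in $\germ n^{(j)}\cap\germ n_1$ and $[A,E]\in\germ n$, the element $[[A,E],[[A,E],F]]=[[A,E],F_1]$ lies in $[\germ n,\germ n^{(j)}]=\germ n^{(j+1)}$, so $(\star_{j+1})$ applied to the pair $(E,F_1)$ is exactly hypothesis (a). For (b): here I would use the super-antisymmetry identity $[F,[A,E]]=-[[A,E],F]$, which already lies in $\germ n^{(j)}$; passing it through two further brackets shows $[F,[F,[A,E]]]\in\germ n^{(j+1)}$ and hence $E_1:=[A,[F,[F,[A,E]]]]\in\germ n^{(j+2)}$, so that for every $G\in\germ n_1$ the element $X_G=[[A,[A,[F,[F,[A,E]]]]],G]=[[A,E_1],G]$ lies in $\germ n^{(j+4)}\subseteq\germ n^{(j+1)}$; thus $(\star_{j+1})$ applied to $(E_1,G)$ gives hypothesis (b). With (a) and (b) established, Lemma \ref{complicatedlemma} yields $\mathrm B_\lambda([[A,E],F],[[A,E],F])=0$, which completes the step and hence the induction.

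The only routine verifications needed are the parities of the various brackets, so that each is a legitimate argument of $\mathrm B_\lambda$ (the elements $[A,E]$ and $[F,[F,[A,E]]]$ are even, while $[[A,E],F]$, $[F,[A,E]]$, $E_1$ and $X_G$ are odd), together with the standard facts $[\germ n,\germ n^{(k)}]=\germ n^{(k+1)}$ and $\germ n^{(k+1)}\subseteq\germ n^{(k)}$. The one substantive point — and the only place the hypothesis that $\mathrm B_\lambda$ is nonnegative definite enters, via Lemma \ref{complicatedlemma} — is the depth bookkeeping in (b): a naive bracket-length count places $X_G$ only in $\germ n^{(7)}$, which is worthless once $j$ is large, and it is precisely the identity $[F,[A,E]]=-[[A,E],F]\in\germ n^{(j)}$ that promotes $X_G$ into $\germ n^{(j+1)}$ and makes the induction close. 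I expect getting this bookkeeping exactly right to be the main obstacle.
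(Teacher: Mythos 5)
Your proof is correct and follows the same strategy as the paper: backward induction along the lower central series, with Lemma \ref{complicatedlemma} supplying the inductive step. The only difference is that you carefully spell out the depth bookkeeping (using $[F,[A,E]]=-[[A,E],F]\in\germ n^{(j)}$ to push $X_G$ into $\germ n^{(j+1)}$) that the paper compresses into the phrase ``it is easily seen.''
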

\begin{proof}
Set $\germ n^{(0)}=\germ n$ and 
$\germ n^{(i)}=[\germ n^{(0)},\germ n^{(i-1)}]$. Note that $\germ n^{(r)}=\{0\}$ 
for $r\gg 0$. 
We prove the lemma by a backward induction as follow.
We assume that the lemma holds for every $E,F$ such that 
$[[A,E],F]\in\germ n^{(r)}$, and we prove that
it holds for every $E,F$ such that $[[A,E],F]\in\germ n^{(r-1)}$. 

Assume the induction hypothesis, and consider $E,F\in\germ n_1$ such that 
$$
[[A,E],F]\in\germ n^{(r-1)}.
$$ It is easily seen that $[[A,E],[[A,E],F]]$
and every element of the form 
$$
[[A,[A,[F,[F,[A,E]]]]],G]
$$ 
where $G\in\germ n_1$
satisfy the induction hypothesis and therefore they are
isotropic vectors for $\mathrm B_\lambda$.
Lemma \ref{complicatedlemma} 
implies that $[[A,E],F]$ is an isotropic vector for 
$\mathrm B_\lambda$ as well.

\end{proof}

\begin{lemma}
\label{firststrangelemma}
Let $E,F\in\germ n_1$.
Then 
$\mathrm B_\lambda([A,[E,F]],[A,[E,F]])=0$.
\end{lemma}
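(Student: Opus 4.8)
The plan is to reduce this statement directly to Lemma \ref{aef} by expanding the triple bracket $[A,[E,F]]$ via the graded Jacobi identity. First I would note that since $E,F\in\germ n_1$, we have $[E,F]\in\germ n_0$, hence $[A,[E,F]]\in\germ n_1$, and so the quantity $\mathrm B_\lambda([A,[E,F]],[A,[E,F]])$ makes sense and it is meaningful to ask whether $[A,[E,F]]$ is an isotropic vector.

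Next I would carry out the Jacobi computation. Applying the (super) Jacobi identity in its derivation form to the odd elements $A,E,F$ gives $[A,[E,F]]=[[A,E],F]+(-1)^{|A||E|}[E,[A,F]]=[[A,E],F]-[E,[A,F]]$. Now $[A,F]\in\germ n_0$ is even and $E$ is odd, so the bracket of an odd element with an even one being antisymmetric yields $[E,[A,F]]=-[[A,F],E]$. Hence
\begin{equation*}
[A,[E,F]]=[[A,E],F]+[[A,F],E].
\end{equation*}
The only point requiring care here is the sign bookkeeping in the graded identities; once that is settled the identity is forced.

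Finally, I would invoke Lemma \ref{aef} twice: applied to the pair $(E,F)$ it shows that $[[A,E],F]$ is isotropic for $\mathrm B_\lambda$, and applied to the pair $(F,E)$ it shows that $[[A,F],E]$ is isotropic. By Lemma \ref{radicalelements}, both of these vectors therefore lie in the radical $\germ r_\lambda$ of $\mathrm B_\lambda$. Since $\germ r_\lambda$ is a linear subspace of $\germ n_1$, the sum $[A,[E,F]]=[[A,E],F]+[[A,F],E]$ also lies in $\germ r_\lambda$; in particular $\mathrm B_\lambda([A,[E,F]],[A,[E,F]])=0$, which is the claim. I do not expect any real obstacle here: unlike Lemmas \ref{complicatedlemma} and \ref{aef}, this step requires no induction, only the elementary observation that the radical of a bilinear form is a subspace together with the already-established Lemma \ref{aef}.
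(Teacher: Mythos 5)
Your proof is correct, and it takes a noticeably more economical route than the paper's. The key identity you use is the derivation form of the super Jacobi identity, which gives $[A,[E,F]]=[[A,E],F]+[[A,F],E]$; your sign bookkeeping checks out. From there you apply Lemma~\ref{aef} twice (to the pairs $(E,F)$ and $(F,E)$) to see that each summand is isotropic, hence in $\germ r_\lambda$ by Lemma~\ref{radicalelements}, and you conclude by the linearity of the radical. The paper instead sets $X=[E,F]$ and runs two successive Jacobi expansions, first on $[A,X],A,X$ to reduce $\lambda([[A,X],[A,X]])$ to $-\lambda([X,[A,[A,X]]])$ (using the orthogonality $\mathrm B_\lambda(A,\germ n_1')=0$ to kill one term), and then on $[A,[A,X]],E,F$ together with Lemma~\ref{aef} to show the surviving term vanishes. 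Your argument avoids the intermediate reduction and the appeal to the orthogonality condition (\ref{orthogg}) at this stage entirely, and replaces it with the simple observation that the radical of $\mathrm B_\lambda$ is a linear subspace; it is shorter and, in my view, cleaner. Both approaches ultimately rest on Lemma~\ref{aef} and Lemma~\ref{radicalelements}, so the logical dependencies are the same.
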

\begin{proof}
Set $X=[E,F]$.
By the Jacobi identity we have
$$
\lambda([[A,X],[A,X]])-\lambda([A,[X,[A,X]]])+
\lambda([X,[[A,X],A])=0.
$$
Since 
$[X,[A,X]]\in\germ n_1'$, 
we have
$\lambda([A,[X,[A,X]]])=0$ and therefore 
\begin{equation}
\label{eqlity}
\lambda([[A,X],[A,X]])=
-\lambda([X,[[A,X],A]])= 
-\lambda([X,[A,[A,X]]]).
\end{equation}
To complete the proof of the lemma 
it suffices to show that the rightmost term in 
(\ref{eqlity}) vanishes.
By the Jacobi identity we have
\begin{eqnarray}
\label{anothjaq}
\lambda([[A,[A,X]],[E,F]])&+&\lambda([E,[F,[A,[A,X]]]])\\
&-&\lambda([F,[[A,[A,X]],E]])=0.
\notag
\end{eqnarray}
From Lemma \ref{aef} it follows that $[F,[A,[A,X]]]$ 
and $[E,[A,[A,X]]]$ are
isotropic vectors for $\mathrm B_\lambda$, and
Lemma \ref{radicalelements} implies that 
\begin{equation}
\label{twozeros}
\lambda([E,[F,[A,[A,X]]]])=0\textrm{\ \ \ and\ \ \ }\lambda([F,[[A,[A,X]],E]])=0.
\end{equation}
From (\ref{twozeros}) and (\ref{anothjaq})
we obtain
$$
\lambda([X,[A,[A,X]]])=-\lambda([[A,[A,X]],[E,F]])=0
$$
which completes the proof of the lemma.

\end{proof}

\begin{lemma}
\label{difficultlemma} 
We have $[\germ n_1,\germ n_1]\subseteq\germ r_\lambda$.
\end{lemma}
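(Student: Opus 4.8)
The plan is to prove Lemma~\ref{difficultlemma} by induction on $\dim\germ n_1$, with Lemmas~\ref{aef} and~\ref{firststrangelemma} carrying the inductive step. Concretely, the inclusion says that $[[E,F],G]\in\germ r_\lambda$ for all $E,F,G\in\germ n_1$ (equivalently, that $[\germ n_1,\germ n_1]$ acts as zero on the quotient $\germ n_1/\germ r_\lambda$, on which $\mathrm B_\lambda$ descends to a positive definite form), and since $\mathrm B_\lambda$ is nonnegative definite, Lemma~\ref{radicalelements} reduces this to checking that each such vector is $\mathrm B_\lambda$-isotropic. For the base case $\dim\germ n_1\le 1$ there is nothing to prove: if $\germ n_1=\mathbb R V$ the graded Jacobi identity forces $[[V,V],V]=0$, and if $\germ n_1=0$ the statement is empty.

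For the inductive step assume $\dim\germ n_1\ge 2$ and use the decomposition $\germ n_1=\germ n_1'\oplus\mathbb R A$ of~(\ref{directsm}), with $A$ chosen as in~(\ref{orthogg}) so that $\mathrm B_\lambda(A,\germ n_1')=0$. First I would note that $\germ n'=\germ n_0\oplus\germ n_1'$ inherits the running hypotheses of this section: it is nilpotent with even part $\germ n_0$, the form $\mathrm B_\lambda|_{\germ n_1'}$ is again nonnegative definite, and $\germ n'$ again has a codimension-one odd subalgebra (any subspace of $\germ n_1'$ containing $[\germ n_0,\germ n_1']$ works); moreover, since the radical of a nonnegative definite form is its null cone, the radical of $\mathrm B_\lambda|_{\germ n_1'}$ is exactly the set of $\mathrm B_\lambda$-null vectors lying in $\germ n_1'$, and hence is contained in $\germ r_\lambda$. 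Next, expanding each of $E,F,G$ along $\germ n_1'\oplus\mathbb R A$ and using multilinearity writes $[[E,F],G]$ as a sum of terms $[[P,Q],R]$ in which each of $P,Q,R$ is either in $\germ n_1'$ or equal to $A$. The term with $P,Q,R$ all in $\germ n_1'$ lies in $[[\germ n_1',\germ n_1'],\germ n_1']$, hence, by the inductive hypothesis applied to $\germ n'$, in the radical of $\mathrm B_\lambda|_{\germ n_1'}$, hence in $\germ r_\lambda$. For a term in which some slot carries the $A$, the graded Jacobi identity together with the symmetry $[X,A]=[A,X]$ for odd $X$ rewrites it so that $A$ occupies an exposed position: a term in which $A$ is innermost is of the shape $[[A,\,\cdot\,],\,\cdot\,]$ and lies in $\germ r_\lambda$ by Lemma~\ref{aef}, while a term in which $A$ is outermost becomes, after one more use of the Jacobi identity, of the shape $[A,[\,\cdot\,,\,\cdot\,]]$ and lies in $\germ r_\lambda$ by Lemma~\ref{firststrangelemma}. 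Summing the contributions yields $[[E,F],G]\in\germ r_\lambda$, closing the induction.

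The main obstacle is the Jacobi bookkeeping in this last step: for each way of distributing the $A$ among the three slots of $[[P,Q],R]$ one must confirm that the graded Jacobi identity (with signs as in $(-1)^{|a||c|}[a,[b,c]]+(-1)^{|b||a|}[b,[c,a]]+(-1)^{|c||b|}[c,[a,b]]=0$) rewrites the term, possibly producing auxiliary summands that are themselves already of the two covered shapes, into brackets to which Lemma~\ref{aef} or Lemma~\ref{firststrangelemma} applies verbatim; it is essential here that Lemma~\ref{aef} is stated for \emph{all} $E,F\in\germ n_1$ (so the outer slot may itself be $A$) and that Lemma~\ref{firststrangelemma} covers $[A,[E,F]]$ for \emph{all} $E,F$. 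The subsidiary check that $\germ n'$ really inherits the section's hypotheses, so that the induction is well founded, is routine; its only substantive input is that restriction preserves nonnegative definiteness and that the radical of such a form is its null cone. Once Lemma~\ref{difficultlemma} is available, it feeds directly into the construction of the even part of a polarizing subalgebra, which is the purpose of Lemma~\ref{existenceofevenpart}.
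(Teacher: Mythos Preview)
Your argument is correct and uses the same inductive scaffold as the paper: pass to the decomposition $\germ n_1=\germ n_1'\oplus\mathbb R A$ of~(\ref{directsm})--(\ref{orthogg}), handle the purely $\germ n_1'$ terms by the induction hypothesis, and kill the rest with Lemmas~\ref{aef} and~\ref{firststrangelemma} together with Lemma~\ref{radicalelements}. Two differences are worth recording.

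First, you read the (literally ill-typed) statement as $[[\germ n_1,\germ n_1],\germ n_1]\subseteq\germ r_\lambda$ and prove exactly that; the paper's own proof establishes only the weaker assertion $\lambda\bigl([[\germ n_1,\germ n_1],[\germ n_1,\germ n_1]]\bigr)=\{0\}$, which is what Lemma~\ref{existenceofevenpart} actually consumes. Your reading is strictly stronger --- the paper reaches $[\germ n_1,[\germ n_1,\germ n_1]]\subseteq\germ r_\lambda$ only later, combining Lemmas~\ref{difficultlemma}, \ref{existenceofevenpart}, and~\ref{thevecisideal} (see the closing Remark) --- so your eight-case expansion does more bookkeeping but yields a cleaner standalone conclusion.

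Second, the mechanics differ slightly. The paper inducts on $\dim\germ n$ and splits according to whether the codimension-one subalgebra drops the odd part; you induct directly on $\dim\germ n_1$, which is tidier since an $\germ n_0$-invariant hyperplane in $\germ n_1$ always exists by Engel. In the step, the paper applies one Jacobi identity to $\lambda([[A,B],[C,D]])$, uses $\mathrm B_\lambda(A,\germ n_1')=0$ to kill one term, and then invokes only Lemma~\ref{firststrangelemma}; you instead apply Lemma~\ref{aef} directly to every term with $A$ in the inner bracket and Lemma~\ref{firststrangelemma} to the single remaining shape $[[P',Q'],A]$. (That last case needs only graded antisymmetry, $[[P',Q'],A]=-[A,[P',Q']]$, not a further Jacobi move.) Both routes rest on the same supporting lemmas; yours is more explicit, the paper's slightly shorter.
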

\begin{proof}
We prove the statement by induction on the dimension of 
$\germ n$. Since $\germ n$ is nilpotent, it has a 
subalgebra $\germ n'=\germ n'_0\oplus\germ n'_1$ 
of codimension one. If $\germ n'_1=\germ n_1$, then
the lemma follows by the induction hypothesis applied to 
$\germ n'$. 
If $\dim\germ n'_1=\dim\germ n_1-1$, then 
we have shown that  
we can find an element $A\in\germ n_1$
such that we have a direct sum decomposition such as (\ref{directsm})
for which (\ref{orthogg}) 
holds. One can check that
\begin{eqnarray}
\label{sumofspaces}
[[\germ n_1,\germ n_1],[\germ n_1,\germ n_1]]&=&[[\germ n'_1,\germ n'_1],[\germ n'_1,\germ n'_1]]
+[[A,\germ n_1],[\germ n_1,\germ n_1]].
\end{eqnarray}
To complete the proof of Lemma \ref{difficultlemma}, we need to show that 
$$
[[\germ n'_1,\germ n'_1],[\germ n'_1,\germ n'_1]]
\subseteq\mathrm{ker}\,\lambda
\textrm{\ \ \ and\ \ \ }
[[A,\germ n_1],[\germ n_1,\germ n_1]]
\subseteq\mathrm{ker}\,\lambda.
$$
Induction hypothesis applied to $\germ n'$
immediately implies that 
$$
[[\germ n'_1,\germ n'_1],[\germ n'_1,\germ n'_1]]\subseteq\ker\lambda.
$$
Next we show that 
$$
[[A,\germ n_1],[\germ n_1,\germ n_1]]
\subseteq
\mathrm{ker}\,\lambda.
$$ 
To this end we prove that for every $B,C,D\in\germ n_1$ we have 
\begin{equation}
\label{maingoal}
\lambda([[A,B],[C,D]])=0.
\end{equation}
By the Jacobi identity we have 
$$
\lambda([[C,D],[A,B]])+\lambda([A,[B,[C,D]]])-\lambda([B,[[C,D],A]])=0.
$$
But as $[\germ n,\germ n]\subseteq\germ n'$, 
we have
$[B,[C,D]]\in\germ n'_1$ and therefore 
$$
\lambda([A,[B,[C,D]]])=0.
$$
By Lemma \ref{radicalelements}, to prove (\ref{maingoal}) 
it suffices to show that 
$$
\mathrm B_\lambda([A,[C,D]],[A,[C,D]])=0.
$$
The latter statement follows from Lemma \ref{firststrangelemma}.

\end{proof}

\begin{lemma}
\label{existenceofevenpart}
There exists a polarizing
subalgebra $\germ m_0$ of $\germ n_0$ 
corresponding to $\lambda$ 
such that $\germ m_0\supseteq [\germ n_1,\germ n_1]$.
\end{lemma}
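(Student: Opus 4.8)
The plan is to build $\germ m_0$ inside the $\omega_\lambda$-orthogonal complement of $\germ b:=[\germ n_1,\germ n_1]$, after first checking that $\germ b$ is a well-behaved isotropic ideal of $\germ n_0$. First I would note that $\germ b$ is an ideal of $\germ n_0$: for $W\in\germ n_0$ and $X,Y\in\germ n_1$ the super Jacobi identity gives $[W,[X,Y]]=[[W,X],Y]+[X,[W,Y]]$, which lies in $[\germ n_1,\germ n_1]$ because $[W,X],[W,Y]\in\germ n_1$. Moreover, since $\mathrm B_\lambda$ is nonnegative definite on $\germ n_1$, Lemma \ref{difficultlemma} yields $\lambda([\germ b,\germ b])=0$; equivalently, $\germ b$ is a subalgebra of $\germ n_0$ subordinate to $\lambda$, and in particular an isotropic subspace for the skew form $\omega_\lambda(U,V)=\lambda([U,V])$ on $\germ n_0$.

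Next I would introduce $\germ b^\perp:=\{\,W\in\germ n_0\mid\lambda([W,\germ b])=0\,\}$. Because $\germ b$ is an ideal, a routine application of the Jacobi identity shows that $\germ b^\perp$ is a subalgebra of $\germ n_0$, and isotropy of $\germ b$ gives $\germ b\subseteq\germ b^\perp$. Set $\mu:=\lambda|_{\germ b^\perp}$; since $[\germ b^\perp,\germ b^\perp]\subseteq\germ b^\perp$, the skew form $\omega_\mu$ on $\germ b^\perp$ coincides with the restriction of $\omega_\lambda$. The crucial observation is that $\germ b$ lies in the radical of $\omega_\mu$: for $B\in\germ b$ and $Y\in\germ b^\perp$ we have $\omega_\mu(B,Y)=\lambda([B,Y])=-\lambda([Y,B])=0$ by the very definition of $\germ b^\perp$. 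I would then invoke the classical existence theorem for polarizing subalgebras of nilpotent Lie algebras (see, e.g., \cite{corgr}) to obtain a polarizing subalgebra $\germ m_0$ of $\germ b^\perp$ corresponding to $\mu$, that is, a subalgebra which is a maximal $\omega_\mu$-isotropic subspace of $\germ b^\perp$. Since the radical of a skew form is contained in every maximal isotropic subspace (adjoining a radical vector preserves isotropy), we conclude $\germ b\subseteq\germ m_0$.

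It then remains to verify that $\germ m_0$ is a polarizing subalgebra of $\germ n_0$ corresponding to $\lambda$. It is a subalgebra of $\germ n_0$, and it is $\omega_\lambda$-isotropic because $\omega_\lambda$ and $\omega_\mu$ agree on $\germ b^\perp$. For maximality, suppose $W\in\germ n_0$ satisfies $\omega_\lambda(W,v)=0$ for all $v\in\germ m_0$; since $\germ b\subseteq\germ m_0$ this forces $\lambda([W,\germ b])=0$, hence $W\in\germ b^\perp$, and then $\omega_\mu(W,v)=0$ for all $v\in\germ m_0$, so $W\in\germ m_0$ by maximality of $\germ m_0$ inside $\germ b^\perp$. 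Thus $\germ m_0$ is a maximal $\omega_\lambda$-isotropic subalgebra of $\germ n_0$ containing $[\germ n_1,\germ n_1]$, which is exactly what the lemma asserts.

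As for where the difficulty lies: essentially all of it has already been absorbed into Lemma \ref{difficultlemma} (together with Lemmas \ref{radicalelements}--\ref{firststrangelemma}), the only place where the nonnegativity of $\mathrm B_\lambda$ is used, and which is what supplies the subordination $\lambda([\germ b,\germ b])=0$. Granting that, what is left is pure linear algebra; the one step that needs a little care is the transfer of maximality from $\germ b^\perp$ to $\germ n_0$, and this rests entirely on the inclusion $\germ b\subseteq\germ m_0$, itself a consequence of $\germ b$ sitting in the radical of $\omega_\mu$.
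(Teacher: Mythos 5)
Your proposal is correct and takes a genuinely different route from the paper's. The paper applies M. Vergne's theorem \cite[Theorem 1.3.5]{corgr} directly to $\germ n_0$: one fixes a complete flag of ideals $\germ n_0=\germ i^{(1)}\supset\cdots\supset\germ i^{(r)}=\{0\}$ chosen to pass through $\germ i^{(s)}=[\germ n_1,\germ n_1]$, and Vergne's construction $\germ q^{(1)}+\cdots+\germ q^{(r)}$ (sum of the radicals of the restricted forms $\omega_\lambda^{(j)}$) is automatically a polarizing subalgebra. Lemma \ref{difficultlemma} -- whose effective content is precisely the isotropy $\lambda\big([[\germ n_1,\germ n_1],[\germ n_1,\germ n_1]]\big)=0$ that you extract from it -- then forces $\germ q^{(s)}=\germ i^{(s)}=[\germ n_1,\germ n_1]$, so the polarizer contains $[\germ n_1,\germ n_1]$. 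You instead reduce to the smaller algebra $\germ b^\perp$ with $\germ b=[\germ n_1,\germ n_1]$, show $\germ b$ lands in the radical of the restricted form so that \emph{any} polarizing subalgebra $\germ m_0$ of $\germ b^\perp$ contains $\germ b$, and then propagate maximality back up to $\germ n_0$ via the observation that $\germ m_0^\perp\subseteq\germ b^\perp$ once $\germ b\subseteq\germ m_0$. Both arguments hinge on the same isotropy supplied by Lemma \ref{difficultlemma}; yours trades Vergne's explicit flag construction for the extra linear-algebra verifications that $\germ b^\perp$ is a subalgebra and that maximality transfers, while the paper's is shorter once Vergne's theorem is taken as a black box but requires exhibiting a flag of ideals adapted to $[\germ n_1,\germ n_1]$. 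Both are sound; yours is arguably more conceptual, since it only uses the bare existence of a polarization for $\germ b^\perp$ rather than the structure of a particular one.
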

\begin{proof}
Since $[\germ n_1,\germ n_1]$
is an ideal of $\germ n_0$, we can find a sequence of ideals of $\germ n_0$ such as
$$
\germ n_0=\germ i^{(1)}\supset\germ i^{(2)}\supset\germ i^{(3)}\supset\cdots\supset\germ i^{(r-1)}\supset\germ i^{(r)}=\{0\}
$$ 
such that for every $1<j\leq r$ we have 
$\dim\germ i^{(j-1)}=\dim\germ i^{(j)}+1$
and moreover
for some $1\leq s\leq r$ we have 
$[\germ n_1,\germ n_1]=\germ i^{(s)}$.
For every $1\leq j\leq r$ let 
$$
\omega_\lambda^{(j)}:\germ i^{(j)}\times\germ i^{(j)}\to\mathbb R
$$ 
be the skew-symmetric bilinear form 
defined by $\omega_\lambda^{(j)}(X,Y)=\lambda([X,Y])$
and let
$\germ q^{(j)}$ be the radical of $\omega_\lambda^{(j)}$.

By a result of M. Vergne (see \cite[Theorem 1.3.5]{corgr}) the vector space
$$
\germ q^{(1)}+\cdots+\germ q^{(r)}
$$ 
is indeed 
a polarizing Lie subalgebra of $\germ n_0$ corresponding to 
$\lambda$. 
Lemma \ref{difficultlemma} implies that 
$\germ q^{(s)}\supseteq [\germ n_1,\germ n_1]$. 

\end{proof}

\subsection{Existence of polarizing systems}
\label{polarisingsystems}

Throughout this section 
$(N_0,\germ m)$ will be a nilpotent super Lie group.   
Let $(M_0,\germ m,\Phi,C_0,\germ c,\lambda)$ 
be a polarizing system in $(N_0,\germ n)$ and 
$(\sigma_\mu,\rho^\sigma_\mu,\mathcal K_\mu)$ be a 
representation of $(C_0,\germ c)$ which is consistent
with this polarizing system. 
Let $\mathrm B_\lambda$ denote the bilinear
form on $\germ n_1$ defined in Section 
\ref{suitable}.

Obviously, for every 
$X\in\germ n_1$ we have 
$$
\mathrm B_\lambda(X,X)=\lambda([X,X])=\mu\circ\Phi([X,X])=\mu([\Phi(X),\Phi(X)])\geq 0.
$$
Consequently, $\mathrm B_\lambda$ is nonnegative definite.

Conversely, let $\lambda\in\germ n_0^*$ be such that 
$\mathrm B_\lambda$ is nonnegative definite.
From 
Lemma \ref{existenceofevenpart} 
it follows that 
there exists a sub super Lie group 
$(M_0,\germ m)$ of $(N_0,\germ n)$ 
such that $\germ m_1=\germ n_1$ and 
$\germ m_0$ is a polarizing Lie subalgebra of $\germ n_0$ 
corresponding to $\lambda$. Let 
$$
\germ k_\lambda=\{\,X\in\germ m_0\ |\ \lambda(X)=0\,\}
$$ 
and 
set
$\germ j=\germ k_\lambda\oplus\germ r_\lambda$. 
\begin{lemma}
\label{thevecisideal}
The vector space $\germ j$ is an ideal in $\germ m$. 
\end{lemma}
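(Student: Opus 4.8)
Write the grading as $\germ m=\germ m_0\oplus\germ n_1$ and $\germ j=\germ k_\lambda\oplus\germ r_\lambda$, with $\germ k_\lambda\subseteq\germ m_0\subseteq\germ n_0$ even and $\germ r_\lambda\subseteq\germ n_1$ odd, so $\germ j$ is automatically a $\mathbb Z_2$-graded subspace; it suffices to check $[\germ m,\germ j]\subseteq\germ j$, i.e. the four brackets $[\germ m_0,\germ k_\lambda]$, $[\germ n_1,\germ r_\lambda]$, $[\germ m_0,\germ r_\lambda]$, $[\germ n_1,\germ k_\lambda]$. Two of these are immediate. Since $\germ m_0$ is isotropic for $\omega_\lambda$ we have $[\germ m_0,\germ m_0]\subseteq\germ m_0\cap\ker\lambda=\germ k_\lambda$, hence $[\germ m_0,\germ k_\lambda]\subseteq\germ k_\lambda$. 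And since the polarizing subalgebra in force is the one furnished by Lemma \ref{existenceofevenpart}, we have $[\germ n_1,\germ n_1]\subseteq\germ m_0$; combined with the fact that $V\in\germ r_\lambda$ satisfies $\lambda([V,\germ n_1])=\mathrm B_\lambda(V,\germ n_1)=\{0\}$, this gives $[\germ n_1,\germ r_\lambda]\subseteq\germ m_0\cap\ker\lambda=\germ k_\lambda$. (I will record the containment $[\germ n_1,\germ n_1]\subseteq\germ m_0$ explicitly at the start, since it is used repeatedly.)

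The substantial point is the single inclusion $[\germ m_0,\germ n_1]\subseteq\germ r_\lambda$, which disposes of the remaining two brackets at once ($[\germ m_0,\germ r_\lambda]\subseteq\germ r_\lambda$ and $[\germ n_1,\germ k_\lambda]=[\germ k_\lambda,\germ n_1]\subseteq[\germ m_0,\germ n_1]\subseteq\germ r_\lambda$). Fix $X\in\germ m_0$ and consider the bilinear form $C_X(W,U)=\mathrm B_\lambda([X,W],U)=\lambda([[X,W],U])$ on $\germ n_1$. The super Jacobi identity with the even element $X$ gives $[[X,W],U]=[X,[W,U]]-[W,[X,U]]$; the first term lies in $[\germ m_0,\germ m_0]\subseteq\ker\lambda$ because $[W,U]\in[\germ n_1,\germ n_1]\subseteq\germ m_0$, so $C_X(W,U)=-\lambda([W,[X,U]])=-\mathrm B_\lambda(W,[X,U])=-C_X(U,W)$, i.e. $C_X$ is antisymmetric. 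Antisymmetry forces $\mathrm{ad}(X)\germ r_\lambda\subseteq\germ r_\lambda$ (for $V\in\germ r_\lambda$, $\mathrm B_\lambda([X,V],U)=C_X(V,U)=-\mathrm B_\lambda(V,[X,U])=0$), so $\mathrm{ad}(X)$ descends to an operator $\overline{\mathrm{ad}(X)}$ on $\germ n_1/\germ r_\lambda$. On this quotient $\mathrm B_\lambda$ induces a \emph{positive-definite} form (this is exactly where the hypothesis that $\mathrm B_\lambda$ is nonnegative definite enters, as in Lemma \ref{radicalelements}), and by antisymmetry of $C_X$ together with symmetry of $\mathrm B_\lambda$ the operator $\overline{\mathrm{ad}(X)}$ is skew-adjoint for it; it is also nilpotent because $\germ n$ is a nilpotent Lie superalgebra. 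A nilpotent skew-adjoint operator on a real inner product space is zero (its complexification is normal and nilpotent, hence $0$), so $\overline{\mathrm{ad}(X)}=0$, that is $[X,\germ n_1]\subseteq\germ r_\lambda$.

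Combining the four cases gives $[\germ m,\germ j]\subseteq\germ j$, so $\germ j$ is an ideal of $\germ m$. The main obstacle is precisely the step $[\germ m_0,\germ n_1]\subseteq\germ r_\lambda$: a naive Jacobi-identity reduction of $\lambda([[X,W],U])=0$ loops back on itself, and what breaks the loop is the interaction between nilpotence of $\mathrm{ad}(X)$ and positivity of $\mathrm B_\lambda$, packaged here as the "skew-adjoint nilpotent $\Rightarrow 0$" observation. I also expect one must be slightly careful to invoke Lemma \ref{existenceofevenpart} at the outset so that $[\germ n_1,\germ n_1]\subseteq\germ m_0$ is available; without it the case $[\germ n_1,\germ r_\lambda]\subseteq\germ k_\lambda$ would fail.
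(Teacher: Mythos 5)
Your proof is correct, and it uses the same key idea as the paper's, namely that $\overline{\mathrm{ad}}$ on $\germ n_1/\germ r_\lambda$ is simultaneously skew-adjoint (for the induced positive-definite form, which is where nonnegativity of $\mathrm B_\lambda$ enters) and nilpotent, hence zero. The one genuine difference is organizational, and it is a genuine improvement in economy. The paper handles the two odd-touching brackets $[\germ k_\lambda,\germ n_1]\subseteq\germ r_\lambda$ and $[\germ m_0,\germ r_\lambda]\subseteq\germ r_\lambda$ by separate arguments: the skew-adjoint-plus-nilpotent argument is run only for $A\in\germ k_\lambda$, and the inclusion $[\germ m_0,\germ r_\lambda]\subseteq\germ r_\lambda$ is established by a direct Jacobi-identity computation. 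You instead observe that the skew-adjoint argument never actually uses $\lambda(A)=0$ (it only needs $[A,[V,W]]\in[\germ m_0,\germ m_0]\subseteq\ker\lambda$, i.e. $A\in\germ m_0$), so you prove the single stronger inclusion $[\germ m_0,\germ n_1]\subseteq\germ r_\lambda$ and read off both of the paper's cases. Your preliminary steps ($[\germ m_0,\germ k_\lambda]\subseteq\germ k_\lambda$ from isotropy, and $[\germ n_1,\germ r_\lambda]\subseteq\germ k_\lambda$ from $[\germ n_1,\germ n_1]\subseteq\germ m_0$ together with the definition of $\germ r_\lambda$) match the paper's, and your explicit flagging of where Lemma~\ref{existenceofevenpart} is needed is a good habit.
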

\begin{proof}
Since $\lambda([\germ m_0,\germ m_0])=0$, we have 
$[\germ m_0,\germ m_0]\subseteq \germ k_\lambda$ and therefore 
$[\germ m_0,\germ k_\lambda]\subseteq \germ k_\lambda$. 

Next we prove that $[\germ k_\lambda,\germ m_1]\subseteq\germ r_\lambda$.
To this end, first note that by the Jacobi identity
for every 
$A\in\germ k_\lambda$, $B\in\germ r_\lambda$, and $C\in\germ n_1$
we have 
$$
-[[A,B],C]-[[B,C],A]+[[C,A],B]=0
$$
and therefore
$$
-\lambda([[A,B],C])-\lambda([[B,C],A])+\lambda([[C,A],B])=0.
$$
But 
$\lambda([[B,C],A])=0$
because $[B,C]\in[\germ n_1,\germ n_1]\subseteq \germ m_0$,
and 
$$
\lambda([[C,A],B])=0
$$
because $B\in\germ r_\lambda$. Consequently, 
for every $A\in \germ k_\lambda$ and $B\in\germ r_\lambda$ 
we have $\mathrm{ad}_AB\in\germ r_\lambda$. 
It follows that $\mathrm{ad}_A$ descends to a linear transformation  
$\overline{\mathrm{ad}}_A:\germ n_1/\germ r_\lambda\to\germ n_1/\germ r_\lambda$. 
The bilinear form $\mathrm B_\lambda$ induces a 
positive definite bilinear form 
$$
\overline{\mathrm B_\lambda}:\germ n_1/\germ r_\lambda\times\germ  
n_1/\germ r_\lambda\to\mathbb R.
$$
Next observe that for every $A\in\germ k_\lambda$ and 
every 
$V,W\in \germ n_1$
we have
$$
-\lambda([W,[A,V]])+\lambda([A,[V,W]])+\lambda([V,[W,A]])=0.
$$
Moreover, $\lambda([A,[V,W]])=0$ since 
$[A,[V,W]]\in [\germ m_0,\germ m_0]\subseteq\germ k_\lambda$.
Therefore for every $v,w\in \germ n_1/\germ r_\lambda$ we have 
$$
\overline{\mathrm B_\lambda}(\overline{\mathrm{ad}}_Av,w)=
-\overline{\mathrm B_\lambda}(v,\overline{\mathrm{ad}}_Aw).
$$
In other words, 
$\overline{\mathrm{ad}}_A$ is skew-symmetric.
Since $\overline{\mathrm{ad}}_A$ is also nilpotent, it follows
that $\overline{\mathrm{ad}}_A=0$. Therefore 
$[\germ k_\lambda,\germ n_1]\subseteq\germ r_\lambda$.

Next we prove that $[\germ m_0,\germ r_\lambda]\subseteq \germ r_\lambda$.
To this end, first note that by the Jacobi identity, 
for every 
$A\in\germ m_0$, $B\in\germ r_\lambda$, and $C\in\germ n_1$ 
we have
$$
-\lambda([[A,B],C])-\lambda([[B,C],A])+\lambda([[C,A],B])=0.
$$
But $\lambda([[B,C],A])=0$ 
because 
$[[B,C],A]\in[\germ m_0,\germ m_0]\subseteq\germ k_\lambda$, and 
$$
\lambda([[C,A],B])=0
$$
because $B\in\germ r_\lambda$. It follows that
$\lambda([[A,B],C])=0$, and consequently, as 
$C\in\germ n_1$ is arbitrary, we have $[A,B]\in\germ r_\lambda$.

Finally, 
the inclusion $[\germ m_1,\germ r_\lambda]\subseteq \germ k_\lambda$
follows from the definition of $\germ k_\lambda$.

\end{proof}

\begin{lemma} 
The quotient Lie superalgebra $\germ m/\germ j$
is reduced. 
\end{lemma}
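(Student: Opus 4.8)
The plan is to prove the statement directly from the description of the obstruction ideal $\germ a[\,\cdot\,]$ in Proposition \ref{nilrad}: by definition, $\germ m/\germ j$ being reduced means exactly that $\germ a[\germ m/\germ j]=\{0\}$, so it suffices to show that the \emph{only} odd element of $\germ m/\germ j$ with vanishing self-bracket is $0$. Indeed, if this holds then the ideal $\germ a^{(1)}$ of $\germ m/\germ j$ is trivial, and inductively each $\germ a^{(m)}$ is trivial since it is generated by odd elements whose self-bracket lies in $\germ a^{(m-1)}=\{0\}$; hence $\germ a[\germ m/\germ j]=\bigcup_{m\geq 1}\germ a^{(m)}=\{0\}$. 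To set this up I would first invoke Lemma \ref{thevecisideal} so that $\germ j=\germ k_\lambda\oplus\germ r_\lambda$ is a $\mathbb Z_2$-graded ideal of $\germ m$; then $\germ m/\germ j$ is a Lie superalgebra with $(\germ m/\germ j)_0=\germ m_0/\germ k_\lambda$ and $(\germ m/\germ j)_1=\germ n_1/\germ r_\lambda$, and I write $\mathsf q:\germ m\to\germ m/\germ j$ for the quotient map.

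The key step is then the claim: if $\overline X\in(\germ m/\germ j)_1$ satisfies $[\overline X,\overline X]=0$, then $\overline X=0$. To prove it, choose $X\in\germ n_1$ with $\mathsf q(X)=\overline X$. Since $\germ m$ is a subalgebra with $\germ m_1=\germ n_1$, we have $[X,X]\in[\germ n_1,\germ n_1]\subseteq\germ m_0$, so the condition $[\overline X,\overline X]=0$, which lives in $\germ m_0/\germ k_\lambda$, is equivalent to $[X,X]\in\germ k_\lambda=\germ m_0\cap\ker\lambda$, i.e. to $\mathrm B_\lambda(X,X)=\lambda([X,X])=0$. Since $\mathrm B_\lambda$ is nonnegative definite throughout this section, Lemma \ref{radicalelements} applies and gives $X\in\germ r_\lambda$, whence $\overline X=\mathsf q(X)=0$.

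Combining the two paragraphs completes the argument. I do not expect a genuine obstacle: the only substantive input beyond the formal unwinding of $\germ a[\,\cdot\,]$ is the equivalence ``$[\overline X,\overline X]=0\iff\mathrm B_\lambda(X,X)=0$'', and this rests entirely on the two structural facts $[\germ n_1,\germ n_1]\subseteq\germ m_0$ (automatic since $\germ m$ is a subalgebra with $\germ m_1=\germ n_1$) and $\germ k_\lambda=\germ m_0\cap\ker\lambda$, after which Lemma \ref{radicalelements} does all the remaining work.
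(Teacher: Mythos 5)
Your proof is correct and takes essentially the same route as the paper: the paper's proof is a one-liner stating that it suffices to show $[X,X]\in\germ k_\lambda\Rightarrow X\in\germ r_\lambda$ and then citing Lemma \ref{radicalelements}, which is exactly your key claim. You simply make explicit the two routine ingredients the paper leaves tacit, namely the inductive unwinding of $\germ a[\,\cdot\,]$ to reduce ``reduced'' to ``no nonzero odd element with vanishing self-bracket,'' and the observation that since $[X,X]\in[\germ n_1,\germ n_1]\subseteq\germ m_0$ one has $[X,X]\in\germ j\iff[X,X]\in\germ k_\lambda\iff\lambda([X,X])=0$.
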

\begin{proof}
It suffices to prove that for every $X\in\germ n_1$ such that $[X,X]\in\germ k_\lambda$, we have
$X\in\germ r_\lambda$. But this follows immediately 
from Lemma \ref{radicalelements}.

\end{proof}
\begin{proposition}
\label{uniquepolar}
Let $(M_0,\germ m)$ be a sub super Lie group of $(N_0,\germ n)$ 
such that $\germ m_0$ is a polarizing subalgebra of $\germ n_0$ 
corresponding
to $\lambda$. Then
there exists a polarizing system $(M_0,\germ m,\Phi,C_0,\germ c,\lambda)$
and a representation 
$(\sigma_\mu,\rho^{\sigma_\mu},\mathcal K_\mu)$ of $(C_0,\germ c)$ which is
consistent with this polarizing system. Moreover, up to unitary 
equivalence and parity change the representation
$(\sigma_\mu\circ\Phi,\rho^{\sigma_\mu\circ\Phi},\mathcal K_\mu)$
of $(M_0,\germ m)$ is unique.
\end{proposition}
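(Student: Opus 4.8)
The plan is to produce the Clifford part of the polarizing system explicitly as $\germ c=\germ m/\germ j$, with $C_0=M_0/\exp(\germ k_\lambda)$ and $\Phi$ the quotient homomorphism, to take $\mu$ to be the functional induced by $\lambda$, and then to deduce uniqueness by showing these data are forced. Recall that $\mathrm B_\lambda$ is nonnegative definite (the standing hypothesis of this section) and that $\germ m_1=\germ n_1$, since a polarizing system requires its sub super Lie group to be special; in particular $[\germ n_1,\germ n_1]\subseteq\germ m_0$. If $\lambda=0$ then $\germ m_0=\germ n_0$, $\germ r_\lambda=\germ n_1$, $\germ j=\germ m$, the only available polarizing system has $\germ c=\{0\}$ with the trivial representation, and the assertion is immediate; so from now on assume $\lambda\neq 0$ and fix, by Lemma \ref{nonzeropolar}, an element $W_0\in\germ m_0$ with $\lambda(W_0)=1$.

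For existence, I would argue as follows. By Lemma \ref{thevecisideal} the space $\germ j=\germ k_\lambda\oplus\germ r_\lambda$ is an ideal of $\germ m$, and $\germ k_\lambda$ is an ideal of $\germ m_0$ because $[\germ m_0,\germ m_0]\subseteq\germ k_\lambda$. Put $\germ c=\germ m/\germ j$: its even part $\germ m_0/\germ k_\lambda$ is one-dimensional, spanned by the image $Z$ of $W_0$, and its odd part is $\germ n_1/\germ r_\lambda$. The computation proving $[\germ k_\lambda,\germ n_1]\subseteq\germ r_\lambda$ in the proof of Lemma \ref{thevecisideal} applies verbatim with $\germ k_\lambda$ replaced by $\germ m_0$ — the only input being $[\germ m_0,[\germ n_1,\germ n_1]]\subseteq[\germ m_0,\germ m_0]\subseteq\ker\lambda$, which forces $\ad_W$ to be $\mathrm B_\lambda$-skew on $\germ n_1$ for every $W\in\germ m_0$, hence to descend to a skew-symmetric, nilpotent, and therefore zero operator on $\germ n_1/\germ r_\lambda$ — so $[\germ m_0,\germ n_1]\subseteq\germ r_\lambda$ and $Z$ is central in $\germ c$. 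Since $[V,V']\in[\germ n_1,\germ n_1]\subseteq\germ m_0$ for $V,V'\in\germ n_1$, the bracket of their images in $\germ c$ is $\mathrm B_\lambda(V,V')Z$, so a basis of $\germ n_1/\germ r_\lambda$ orthonormal for the positive definite form $\overline{\mathrm B_\lambda}$ exhibits $\germ c$ as being of Clifford type. Setting $C_0=M_0/\exp(\germ k_\lambda)$ and letting $\Phi$ be the quotient homomorphism, one has $\germ m_0\cap\ker\Phi=\germ m_0\cap\germ j=\germ k_\lambda=\germ m_0\cap\ker\lambda$, and the remaining requirements of Definition \ref{polarizing} are immediate, so $(M_0,\germ m,\Phi,C_0,\germ c,\lambda)$ is a polarizing system. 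Finally, define $\mu\colon\germ c_0\to\mathbb R$ by $\mu(Z)=1$; then $\mu([V',V'])=\mathrm B_\lambda(V,V)\ge 0$ for $V'\in\germ c_1$ with preimage $V$, so by Section \ref{unitclifford} the representation $(\sigma_\mu,\rho^{\sigma_\mu},\mathcal K_\mu)$ exists, and $\lambda(W)=\mu(\Phi(W))$ for $W\in\germ m_0$ by construction, so it is consistent with this polarizing system.

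For uniqueness, let $(M_0,\germ m,\Phi',C_0',\germ c',\lambda)$ be any polarizing system carrying a consistent representation $(\sigma_{\mu'},\rho^{\sigma_{\mu'}},\mathcal K_{\mu'})$; I would first show $\ker\Phi'=\germ j$. Part (d) of Definition \ref{polarizing} gives $\ker\Phi'\cap\germ m_0=\germ m_0\cap\ker\lambda=\germ k_\lambda$. On the odd part: if $V'\in\ker\Phi'\cap\germ n_1$, then $\Phi'([V,V'])=[\Phi'(V),\Phi'(V')]=0$ for every $V\in\germ n_1$, whence $[V,V']\in\germ m_0\cap\ker\Phi'=\germ k_\lambda\subseteq\ker\lambda$, i.e.\ $V'\in\germ r_\lambda$; conversely, if $V'\in\germ r_\lambda$ with $\Phi'(V')\neq 0$, then, the form on $\germ c'_1$ being nondegenerate since $\germ c'$ is of Clifford type, there is $V\in\germ n_1$ with $[\Phi'(V'),\Phi'(V)]\neq 0$, so $[V,V']\notin\ker\Phi'$, contradicting $[V,V']\in[\germ n_1,\germ n_1]\subseteq\germ m_0$ together with $\lambda([V,V'])=0$. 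Thus $\ker\Phi'=\germ k_\lambda\oplus\germ r_\lambda=\germ j$, so $\Phi'$ induces an isomorphism $\germ m/\germ j\cong\germ c'$ and, since $C_0'$ is simply connected, factors through a canonical isomorphism $M_0/\exp(\germ k_\lambda)\cong C_0'$; in other words $(\Phi',C_0',\germ c')$ agrees with $(\Phi,C_0,\germ c)$ up to a canonical isomorphism of the Clifford part. The consistency relation $\mu'(\Phi'(W))=\lambda(W)$, with surjectivity of $\Phi'$, shows $\mu'$ pulls back to $\mu$; hence $(\sigma_{\mu'},\rho^{\sigma_{\mu'}},\mathcal K_{\mu'})$, transported to $(C_0,\germ c)$, is an irreducible unitary representation with associated functional $\mu$, so by Proposition \ref{propcl} it is, up to unitary equivalence and parity change, $(\sigma_\mu,\rho^{\sigma_\mu},\mathcal K_\mu)$. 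Composing with the parity-preserving homomorphism $\Phi$ and unwinding the identification then gives $(\sigma_{\mu'}\circ\Phi',\rho^{\sigma_{\mu'}\circ\Phi'},\mathcal K_{\mu'})\eqsim(\sigma_\mu\circ\Phi,\rho^{\sigma_\mu\circ\Phi},\mathcal K_\mu)$.

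The step I expect to be the main obstacle is verifying that $\germ m/\germ j$ is genuinely of Clifford type — concretely, that the image of $\germ m_0$ is central, which rests on the skew-symmetry-plus-nilpotency argument for $\ad_W$, $W\in\germ m_0$, on $(\germ n_1,\mathrm B_\lambda)$ — and, on the uniqueness side, extracting $\ker\Phi'=\germ j$ from the nondegeneracy of the Clifford form and condition (d) of Definition \ref{polarizing}. The remaining points — that $\Phi$ is a homomorphism of super Lie groups, the checking of the conditions of Definition \ref{polarizing}, and the appeal to Proposition \ref{propcl} — are routine.
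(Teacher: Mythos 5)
Your proposal is correct and follows essentially the same route as the paper: both construct the polarizing system by taking $\germ c=\germ m/\germ j$ with $\germ j=\germ k_\lambda\oplus\germ r_\lambda$ and $\Phi$ the quotient map, and both obtain uniqueness from the observation that any polarizing homomorphism $\Phi'$ must kill $\germ j$. The only small differences are that you verify the Clifford-type structure of $\germ m/\germ j$ by hand (via the skew-symmetry-plus-nilpotency argument and positivity of $\overline{\mathrm B_\lambda}$) where the paper invokes Proposition \ref{kirillovlemma}, and that you prove $\ker\Phi'=\germ j$ by establishing both inclusions where the paper proves only $\germ j\subseteq\ker\Phi'$ and then appeals to the fact that an epimorphism between super Lie groups of Clifford type is necessarily an isomorphism.
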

\begin{proof}
By Lemma \ref{nonzeropolar} the quotient Lie superalgebra $\germ m/\germ j$ is either zero or has a one dimensional
even part. 
Moreover, $\germ m/\germ j$ is zero if and only if $\lambda=0$.
Since the case $\lambda=0$ can be easily dealt with, 
from now on we assume that $\lambda\neq 0$, and consequently 
$\germ m/\germ j$ is nonzero.

Since $\germ m/\germ j$ is reduced and nilpotent,
we have $\mathcal Z(\germ m/\germ j)=\germ m_0/\germ k_\lambda$
and hence $\dim\mathcal Z(\germ m/\germ j)=1$.
Therefore from Proposition \ref{kirillovlemma} it follows that 
$\germ m/\germ j$ is of Clifford type. (Note that  
one may have $\dim\germ m/\germ j=1$.)

Let $K_\lambda$
be a closed subgroup of $M_0$ with Lie algebra $\germ k_\lambda$
and  
$$
\overline\Phi:(M_0,\germ m)\to(M_0/K_\lambda,\germ m/\germ j)
$$ 
be the natural quotient map. Then
$(M_0,\germ m,\overline\Phi,M_0/K_\lambda,\germ m/\germ k_\lambda,\lambda)$ is a polarizing system. 
Moreover, up to unitary equivalence and parity change, there exists a unique irreducible unitary representation 
$(\sigma_{\overline \mu},\rho^{\sigma_{\overline\mu}},\mathcal K_{\overline\mu})$
of 
$(M_0/K_\lambda,\germ m/\germ j)$ which is consistent with this polarizing system. 

Next we prove the uniqueness claim of Proposition \ref{uniquepolar}.
Without loss of generality, we can assume $\lambda\neq 0$. 
Consider another polarizing system 
$$
(M_0,\germ m,\Phi,C_0,\germ c,\lambda)
$$ 
and a consistent irreducible
unitary representation $(\sigma_\mu,\rho^{\sigma_\mu},\mathcal K_\mu)$ of
$(C_0,\germ c)$.
Observe that 
\begin{equation}
\label{interesting}
\germ j\subseteq\mathrm{ker}\,\Phi.
\end{equation}
Indeed for every $X\in\germ k_\lambda$ we have
$\mu\circ\Phi(X)=\lambda(X)=0$ which implies that
$\Phi(X)=0$. Similarly, for every $X\in\germ r_\lambda$ 
we have $\mu\circ\Phi([X,X])=\lambda([X,X])=0$ which implies that
$[\Phi(X),\Phi(X)]=\Phi([X,X])=0$. But since $\germ c$ is reduced, 
it follows that $\Phi(X)=0$. This completes the proof of
(\ref{interesting}).

From (\ref{interesting}) it follows that 
there exists an epimorphism 
$$
\Psi:(M_0/K_\lambda,\germ m/\germ j)\to(C_0,\germ c)
$$
which satisfies $ \Psi\circ\overline\Phi=\Phi$.
However, any 
epimorphism between super Lie groups of Clifford type is 
indeed an isomorphism. From
Proposition \ref{propcl} it follows that 
$$
(\sigma_{\overline\mu}\circ\overline\Phi,\rho^{\sigma_{\overline\mu}\circ\Phi},\mathcal K_{\overline\mu})\eqsim
(\sigma_\mu\circ\Psi\circ\overline\Phi,\rho^{\sigma_\mu\circ\Psi\circ\overline\Phi},\mathcal K_\mu)\eqsim
(\sigma_\mu\circ\Phi,\rho^{\sigma_\mu\circ\Phi},\mathcal K_\mu)
$$
which completes the proof.

\end{proof}
Let $(M_0,\germ m,\Phi,C_0,\germ c,\lambda)$ be a polarizing system
with a consistent representation 
$(\sigma_\mu,\rho^{\sigma_\mu},\mathcal K_\mu)$. 
From now on, 
$(\pi,\rho^\pi,\mathcal H)$ will denote the induced unitary
representation 
\begin{equation}
\label{piinduced}
(\pi,\rho^\pi,\mathcal H)=\mathrm{Ind}_{(M_0,\germ m)}^{(N_0,\germ n)}
(\sigma_\mu\circ\Phi,\rho^{\sigma_\mu\circ\Phi},\mathcal K_\mu).
\end{equation}
Recall the definition of $\germ a[\germ n]$ from Section
\ref{thereduced}. Since $\germ m$
is an ideal of $\germ n$, we have
\begin{equation}
\label{mcontainsa}
\germ m\supseteq\germ a[\germ n]
\end{equation}
because
$\germ m$ contains all of the 
generators of $\germ a[\germ n]$.
\begin{lemma}
\label{ainker}
$\germ a[\germ n]_0\cap\mathcal Z(\germ n)
\subseteq\mathrm{ker}\,\lambda$.
\end{lemma}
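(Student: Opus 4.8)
The plan is to compute $\pi^\infty(W)$ for $W\in\germ a[\germ n]_0\cap\mathcal Z(\germ n)$ in two different ways, once via Proposition~\ref{nilrad} and once via the function-space realization of the induced representation~(\ref{piinduced}), and then to compare the two. Note first that, since $\germ a[\germ n]_0$ is contained in the even part of $\germ n$, such a $W$ automatically lies in $\germ a[\germ n]_0\cap\mathcal Z(\germ n)\cap\germ n_0$; in particular $W\in\mathcal Z(\germ n_0)$.

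The first step is to verify that $W\in\germ m_0$. Since $[W,\germ n_0]=\{0\}$, the element $W$ lies in the radical of the skew-symmetric form $\omega_\lambda(X,Y)=\lambda([X,Y])$ on $\germ n_0$, and the radical of a skew-symmetric form is contained in every maximal isotropic subspace, hence in the polarizing subalgebra $\germ m_0$. (Alternatively one may use~(\ref{mcontainsa}).) Therefore $\exp(tW)\in M_0$ for all $t\in\mathbb R$, and since $W$ is central in $\germ n_0$ the one-parameter subgroup $\exp(tW)$ is central in $N_0$.

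Next I would compute $\pi^\infty(W)$. On one hand, Proposition~\ref{nilrad} asserts that the $\mathbb Z_2$-graded ideal $\germ a[\germ n]$ acts trivially on $\mathcal H$, so $\pi^\infty(W)=0$. On the other hand, in the model of Section~\ref{realization} an element $f\in\mathcal H$ is a $\mathcal K_\mu$-valued function on $N_0$ satisfying $f(hg)=(\sigma_\mu\circ\Phi)(h)f(g)$ for $h\in M_0$, with $\big(\pi(g)f\big)(g')=f(g'g)$; since $\exp(tW)$ is central in $N_0$ and belongs to $M_0$, we get $\big(\pi(\exp(tW))f\big)(g')=f(\exp(tW)g')=\big((\sigma_\mu\circ\Phi)(\exp(tW))\big)\big(f(g')\big)$ for every $g'\in N_0$. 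By~(\ref{eqnabtmu}) combined with the consistency condition~(\ref{compathadi}), $(\sigma_\mu\circ\Phi)(\exp(tW))=e^{t\mu(\Phi(W))\sqrt{-1}}\,\mathrm{Id}=e^{t\lambda(W)\sqrt{-1}}\,\mathrm{Id}$, whence $\pi(\exp(tW))=e^{t\lambda(W)\sqrt{-1}}\,\mathrm{Id}_{\mathcal H}$ and so $\pi^\infty(W)=\lambda(W)\sqrt{-1}\,\mathrm{Id}_{\mathcal H}$.

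Comparing the two computations gives $\lambda(W)\sqrt{-1}\,\mathrm{Id}_{\mathcal H}=0$, and since $\mathcal H\neq\{0\}$ (the inducing module $\mathcal K_\mu$ is at least $(1|0)$-dimensional, and special induction from a nonzero representation is nonzero) we conclude $\lambda(W)=0$. There is no real obstacle in this argument; the two points that need a little care are the verification that $W\in\germ m_0$ (so that the covariance formula for $f$ can legitimately be evaluated at $\exp(tW)$) and the invocation of consistency~(\ref{compathadi}), which is precisely the hypothesis that identifies $\mu(\Phi(W))$ with $\lambda(W)$.
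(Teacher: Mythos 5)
Your argument is correct and is exactly the one the paper sketches: compare $\pi^\infty(W)=0$ from Proposition~\ref{nilrad} with the value $\lambda(W)\sqrt{-1}\,\mathrm{Id}$ obtained from the function-space realization of~(\ref{piinduced}) (via centrality of $\exp(tW)$, the covariance condition $f(hg)=\sigma_\mu\circ\Phi(h)f(g)$, and the consistency identity~(\ref{compathadi})). The paper compresses the second computation into ``it is not difficult to see that $\pi^\infty(X)\neq 0$''; you have simply filled in those details, including the needed observation that $W\in\germ m_0$ (which, as you note, also follows directly from~(\ref{mcontainsa})).
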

\begin{proof}
The proof of this lemma is essentially given throughout the proof of 
Case I of Theorem \ref{lastmainthm}, 
and therefore here we only give a sketch of the proof.
Let $(\pi,\rho^\pi,\mathcal H)$ be as in (\ref{piinduced}),
and assume $X\in\germ a[\germ n]_0\cap\mathcal Z(\germ n)$ and 
$\lambda(X)\neq 0$. Using the definition of the induced representation,
it is not difficult to see that $\pi^\infty(X)\neq 0$,
which contradicts
Proposition \ref{nilrad}.

\end{proof}

\subsection{Relation between $(\pi,\rho^\pi,\mathcal H)$ and $\lambda$}
Let $\lambda\in\germ n_0^*$ such that $\mathrm B_\lambda$ 
is nonnegative definite. By Proposition \ref{uniquepolar}
there exists a 
polarizing system $(M_0,\germ m,\Phi,C_0,\germ c,\lambda)$,
and by Theorem \ref{lastmainthm} the representation obtained 
by induction from a consistent representation of the polarizing system
is irreducible. Our next task is to
show that if we choose different polarizing systems, we always obtain the 
same representation.
\begin{proposition}
\label{threefive}
Up to unitary equivalence and parity change,
the representation $(\pi,\rho^\pi,\mathcal H)$ is uniquely 
determined by $\lambda$. 
\end{proposition}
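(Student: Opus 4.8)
The plan is to prove the statement by induction on $\dim\germ n$, closely mirroring the proof of part (a) of Theorem \ref{thmindpol}. Let $(M_0,\germ m,\Phi,C_0,\germ c,\lambda)$ and $(M_0',\germ m',\Phi',C_0',\germ c',\lambda)$ be two polarizing systems of $(N_0,\germ n)$ with the \emph{same} functional $\lambda$, equipped with consistent representations $(\sigma_\mu,\rho^{\sigma_\mu},\mathcal K_\mu)$ and $(\sigma_{\mu'},\rho^{\sigma_{\mu'}},\mathcal K_{\mu'})$, and let $(\pi,\rho^\pi,\mathcal H)$, $(\pi',\rho^{\pi'},\mathcal H')$ be the two induced representations, both irreducible by Theorem \ref{lastmainthm}; I must show $(\pi,\rho^\pi,\mathcal H)\eqsim(\pi',\rho^{\pi'},\mathcal H')$. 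First I would observe, using Proposition \ref{uniquepolar}, that once $\lambda$ is fixed the induced representation depends up to unitary equivalence and parity change only on the polarizing subalgebra $\germ m_0\subseteq\germ n_0$ (the data $\Phi,C_0,\germ c$ and the consistent representation of $(M_0,\germ m)$ being pinned down by $\germ m_0$ and $\lambda$); the case $\lambda=0$ is trivial, so assume $\lambda\neq 0$. Note also that since every polarizing system is special we always have $\germ a[\germ n]\subseteq\germ m\cap\germ m'$.

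If $(N_0,\germ n)$ is not reduced, I would argue as in Case I of the proof of Theorem \ref{lastmainthm}: using Proposition \ref{nilrad}, axiom (d) of Definition \ref{polarizing}, reducedness of $\germ c$ and $\germ c'$, and $\germ a[\germ n]\subseteq\germ m\cap\germ m'$, one checks that $\lambda$ annihilates $\germ a[\germ n]_0$, that both representations factor through the reduced form $(\overline N_0,\overline{\germ n})$, and that both polarizing systems descend to polarizing systems of $(\overline N_0,\overline{\germ n})$ attached to the same descended functional $\overline\lambda$; since $\dim\overline{\germ n}<\dim\germ n$ the induction hypothesis applies. If $(N_0,\germ n)$ is reduced with $\dim\mathcal Z(\germ n)>1$, then (as in Case II of that proof) $\mathcal Z(\germ n)\subseteq\germ m_0\cap\germ m_0'$ acts in both representations by the single character $V\mapsto e^{\lambda(V)\sqrt{-1}}$, so both factor through the quotient of $(N_0,\germ n)$ by $\mathcal Z(\germ n)\cap\ker\lambda$ (or by all of $\mathcal Z(\germ n)$ if $\lambda$ vanishes on it); the dimension strictly drops and the induction hypothesis again applies. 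There remains the reduced case with $\dim\mathcal Z(\germ n)=1$: if $\germ n$ is of Clifford type the claim is exactly Proposition \ref{propcl} together with consistency (which forces $\mu(Z)=\lambda(Z)$), and otherwise part (a) of Proposition \ref{kirillovlemma} holds, with data $X,Y,Z,\germ n'$; if $\pi^\infty|_{\mathcal Z(\germ n)}$ is trivial the preceding argument still works, so we are reduced to assuming $\lambda(Z)\neq 0$.

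The main work is this last case, and here I would reuse verbatim the two normalizing moves from Case III of the proof of Theorem \ref{lastmainthm}, applied to \emph{both} polarizing systems \emph{simultaneously}. First conjugate everything by $n=\exp(t_\circ X)\in N_0$ with $t_\circ=\lambda(Y)/\lambda(Z)$ (the Kirillov data $X,Y,Z,\germ n'$ being fixed once and for all): this replaces $\lambda$ by $\mathrm{Ad}^*(n)(\lambda)$, the same new functional for both systems, which now annihilates $Y$ and not $Z$, and it only moves the induced representations within their equivalence classes. Second, whenever the polarizing subalgebra is not contained in $\germ n'$, replace it by one inside $\germ n'$ via the explicit Fourier-transform intertwiner of Case III. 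After these moves both polarizing systems lie inside $(N_0',\germ n')$, are attached to the common functional obtained by restricting to $\germ n_0'$, and carry their (still consistent) representations; moreover, as shown in Case III, each original induced representation is $\ind_{(N_0',\germ n')}^{(N_0,\germ n)}$ of the corresponding representation $(\pi_i'',\rho^{\pi_i''},\mathcal H_i'')$ of $(N_0',\germ n')$, itself induced from the now-smaller polarizing system of $(N_0',\germ n')$. Since $\dim\germ n'<\dim\germ n$, the induction hypothesis gives $(\pi_1'',\rho^{\pi_1''},\mathcal H_1'')\eqsim(\pi_2'',\rho^{\pi_2''},\mathcal H_2'')$, and applying $\ind_{(N_0',\germ n')}^{(N_0,\germ n)}$ — which respects unitary equivalence and parity change — finishes the case.

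The hard part is not conceptual but bookkeeping: one must verify that $\lambda$ (or the conjugated functional $\mathrm{Ad}^*(n)(\lambda)$, applied uniformly to both systems) is transported correctly through every reduction, and that each reduced 6-tuple is a genuine polarizing system with a consistent representation of the smaller super Lie group, so that the induction hypothesis can be invoked; these are precisely the checks already performed in the proofs of Theorem \ref{thmindpol}(a) and Theorem \ref{lastmainthm}, and I would simply refer to them. Conceptually, the even-part content of the statement is the classical fact that $\ind_{M_0}^{N_0}\chi_\lambda$ is independent of the polarizing subalgebra (\cite[Theorem 2.2.4]{corgr}), which underlies all the dimension-reduction steps, while Proposition \ref{uniquepolar} is the new ingredient guaranteeing that the odd (Clifford-module) part of $(\pi,\rho^\pi,\mathcal H)$ is likewise determined by $\lambda$ alone.
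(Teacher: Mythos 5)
Your proposal is correct and follows essentially the same route as the paper's own proof: both argue by induction on $\dim\germ n$, split into the non-reduced case (quotient by a piece of $\germ a[\germ n]\cap\mathcal Z(\germ n)$), the reduced case with $\mathcal Z(\germ n)\cap\ker\lambda\neq\{0\}$ (quotient by that intersection), and the Kirillov-lemma case (after normalizing $\lambda(Y)=0$ and moving the polarizing subalgebras into $\germ n'$, as in Case III of Theorem \ref{lastmainthm}, apply the induction hypothesis on $(N'_0,\germ n')$ and conclude via Proposition \ref{indtransitivity}). The only cosmetic difference is that you pass directly to the full reduced form in the non-reduced case, whereas the paper quotients by $\germ a[\germ n]\cap\mathcal Z(\germ n)$ one step at a time, a slightly more economical reduction that only needs Lemma \ref{ainker} rather than the full inclusion $\germ a[\germ n]_0\subseteq\ker\lambda$.
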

\begin{proof}
We prove the proposition by induction on $\dim \germ n$. The argument
is similar to the proof of Theorem \ref{lastmainthm}.

Let $(M_0,\germ m,\Phi,C_0,\germ c,\lambda)$ (respectively,  
$(M'_0,\germ m',\Phi',C'_0,\germ c',\lambda)$) be a polarizing system
with a consistent representation 
$(\sigma_\mu,\rho^{\sigma_\mu},\mathcal K_\mu)$ 
(respectively, $(\sigma_{\mu'},\rho^{\sigma_{\mu'}},\mathcal K_{\mu'})$). 
(Note that the two polarizing systems are associated to the same $\lambda$.)
Suppose that 
$(\pi,\rho^\pi,\mathcal H)$ 
(respectively, $(\pi',\rho^{\pi'},\mathcal H')$)
is the induced representation defined as in (\ref{piinduced}).
Our main goal is to prove that
\begin{equation}
\label{maingoallast}
(\pi,\rho^\pi,\mathcal H)\eqsim(\pi',\rho^{\pi'},\mathcal H').
\end{equation}
There are three cases to consider.

{\it Case I: $(N_0,\germ n)$ is not reduced.} As in
the proof of Case I in  
Theorem \ref{lastmainthm},
we can show that 
$$
\germ a[\germ n]\cap\mathcal Z(\germ n)\neq\{0\}.
$$
Moreover, 
$\germ a[\germ n]\cap\mathcal Z(\germ n)\subseteq\germ m\cap\germ m'$,
and using Lemma \ref{ainker} we can see that for every 
$W\in\germ a[\germ n]\cap\mathcal Z(\germ n)$ we have $\Phi(W)=0$ and 
$\Phi'(W)=0$. 

Set $\germ s=\germ a[\germ n]\cap\mathcal Z(\germ n)$
and consider the corresponding sub super Lie group $(S_0,\germ s)$
of $(N_0,\germ n)$. 
Since $\germ s$ is an ideal of $\germ n$, we have a 
quotient homomorphism
$$
\mathbf q:(N_0,\germ n)\to(N_0/S_0,\germ n/\germ s).
$$
The polarizing system $(M_0,\germ m,\Phi,C_0,\germ c,\lambda)$
corresponds via $\mathbf q$ to a polarizing
system
$$
(M_0/S_0,\germ m/\germ s,\Phi_\mathbf q,
C_0,\germ c,\lambda_\mathbf q)
$$
in $(N_0/S_0,\germ n/\germ s)$, 
where $\lambda_\mathbf q\in(\germ n/\germ s)^*$ 
satisfies $\lambda_\mathbf q\circ\mathbf q=\lambda$.
If we set
$$
(\pi_\mathbf q,\rho^{\pi_\mathbf q},\mathcal H_\mathbf q)=
\mathrm{Ind}_{(M_0/S_0,\germ m/\germ s)}^{(N_0/S_0,\germ n/\germ s)}
(\sigma_\mu\circ\Phi_\mathbf q,\rho^{\sigma_\mu\circ\Phi_\mathbf q},
\mathcal K_\mu)
$$ 
then $(\pi,\rho^\pi,\mathcal H)\simeq
(\pi_\mathbf q\circ\mathbf q,\rho^{\pi_\mathbf q\circ\mathbf q},
\mathcal H_\mathbf q)$.
From the other polarizing system and its consistent representation one 
can obtain another representation
$(\pi'_\mathbf q,\rho^{\pi'_\mathbf q},\mathcal H'_\mathbf q)$
of $(N_0/S_0,\germ n/\germ s)$
which is defined in a similar way. Since 
$\dim\germ n/\germ s<\dim\germ n$, induction hypothesis
implies that
$$
(\pi_\mathbf q,\rho^{\pi_\mathbf q},\mathcal H_\mathbf q)
\eqsim
(\pi'_\mathbf q,\rho^{\pi'_\mathbf q},\mathcal H'_\mathbf q)
$$
from which (\ref{maingoallast}) follows immediately.

{\it Case II: $(N_0,\germ n)$ is reduced and 
$\mathcal Z(\germ n)\cap\mathrm{ker}\,\lambda\neq\{0\}$.}
In this case $\mathcal Z(\germ n)\cap\mathrm{ker}\,\lambda$ is 
an ideal of $\germ n$ and 
$\mathcal Z(\germ n)\cap\mathrm{ker}\,\lambda\subseteq\germ m\cap\germ m'$.
Set $\germ s=\mathcal Z(\germ n)\cap\mathrm{ker}\,\lambda$
and let $(S_0,\germ s)$ be the corresponding sub super Lie group of
$(N_0,\germ n)$. As in Case I above, using the quotient map
$$
\mathbf q:(N_0,\germ n)\to(N_0/S_0,\germ n/\germ s)
$$ 
we can obtain new polarizing systems and consistent 
representations for
$$
(N_0/S_0,\germ n/\germ s).
$$ The rest of the argument is similar
to that of Case I above.

{\it Case III:
$(N_0,\germ n)$ is reduced and 
$\mathcal Z(\germ n)\cap\mathrm{ker}\,\lambda=\{0\}$.}
In this case the proof is very similar to that of 
Case III in Theorem \ref{lastmainthm}.
Without loss of generality we can assume that $\germ n$
is not of Clifford type.
From 
$\mathcal Z(\germ n)\cap\mathrm{ker}\,\lambda=\{0\}$ it 
follows that $\dim \mathcal Z(\germ n)=1$. 
Let $X,Y,Z,$ and $\germ n'$ 
be as in part (a) of Proposition \ref{kirillovlemma}.
As shown in the proof of Case III in 
Theorem \ref{lastmainthm}, 
we can choose $X,Y,Z$ suitably such that there exist 
polarizing systems 
\begin{equation}
\label{poolpol}
(\overline M_0,\overline{\germ m},\overline\Phi,
C_0,\germ c,\overline\lambda)
\textrm{\ \ and\ \ }
(\overline{M}'_0,\overline{\germ m}',\overline{\Phi}',
C'_0,\germ c',\overline\lambda)
\end{equation}
in $(N_0,\germ n)$
with the following properties.
\begin{itemize}
\item[(a)]  
$\overline\lambda=\mathrm{Ad}^*(n)(\lambda)$ 
for some $n\in N_0$.
\item[(b)] $\overline{\germ m}\subseteq\germ n'$
and $\overline{\germ m}'\subseteq\germ n'$.
\item[(c)] $(\sigma_\mu,\rho^{\sigma_\mu},\mathcal K_\mu)$ 
is consistent with
$(\overline M_0,\overline{\germ m},\overline\Phi,
C_0,\germ c,\overline\lambda)$.
\item[(d)] 
$(\sigma_{\mu'},\rho^{\sigma_{\mu'}},\mathcal K_{\mu'})$
is consistent with
$(\overline M'_0,\overline{\germ m}',\overline{\Phi}',
C'_0,\germ c',\overline\lambda)$.

\item[(e)]  If $(\overline\pi,\rho^{\overline\pi},\overline{\mathcal H})=
\mathrm{Ind}_{(\overline M_0,\overline{\germ m})}^{(N_0,\germ n)}
(\sigma_\mu\circ\overline\Phi,\rho^{\sigma_\mu\circ\overline\Phi},\mathcal K_\mu)$ 
then
$$
(\overline\pi,\rho^{\overline\pi},\overline{\mathcal H})\simeq
(\pi,\rho^{\pi},\mathcal H).
$$
\item[(f)] If 
$(\overline{\pi}',\rho^{\overline{\pi}'},\overline{\mathcal H}')=
\mathrm{Ind}_{(\overline{M}'_0,\overline{\germ m}')}^{(N_0,\germ n)}
(\sigma_{\mu'}\circ\overline{\Phi}',
\rho^{\sigma_{\mu'}\circ\overline{\Phi}'},\mathcal K_{\mu'})$ 
then 
$$
(\overline{\pi}',\rho^{\overline{\pi}'},\overline{\mathcal H}')\simeq
(\pi',\rho^{\pi'},\mathcal H').
$$
\end{itemize}
Let $(N'_0,\germ n')$ be the sub super Lie group
of $(N_0,\germ n)$ corresponding to $\germ n'$.
Since $\dim\germ n'<\dim\germ n$, 
by induction hypothesis we have
$$
\mathrm{Ind}_{(\overline M_0,\overline{\germ m})}^{(N'_0,\germ n')}
(\sigma_{\mu}\circ\overline\Phi,
\rho^{\sigma_\mu\circ\overline\Phi},\mathcal K_{\mu})
\eqsim
\mathrm{Ind}_{(\overline M'_0,\overline{\germ m}')}^{(N'_0,\germ n')}
(\sigma_{\mu'}\circ\overline\Phi',
\rho^{\sigma_{\mu'}\circ\overline\Phi'},\mathcal K_{\mu'})
$$
and (\ref{maingoallast})
follows by Proposition \ref{indtransitivity}.

\end{proof}
\subsection{Geometric parametrization of 
representations}
Let $(\pi,\rho^\pi,\mathcal H)$ be an irreducible unitary 
representation of a nilpotent super Lie group $(N_0,\germ n)$.
One can associate a coadjoint orbit $\EuScript{O}\subseteq\germ n_0^*$
to $(\pi,\rho^\pi,\mathcal H)$ as follows.
Let $(\pi,\mathcal H)$ denote the restriction 
of $(\pi,\rho^\pi,\mathcal H)$ to $N_0$.
From Corollary \ref{mycorollary}
it follows that $(\pi,\mathcal H)$ is a direct sum of finitely many copies
of an irreducible unitary representation $(\sigma,\mathcal K)$
of $N_0$. By classical Kirillov theory 
\cite{corgr}, the representation
$(\sigma,\mathcal K)$ is associated to a coadjoint orbit 
$\EuScript{O}\subseteq\germ n_0^*$. 
Theorem \ref{thmindpol} shows that
$(\pi,\rho^\pi,\mathcal H)$ is induced from a consistent representation
of a polarizing system $(M_0,\germ m,\Phi,C_0,\germ c,\lambda)$ 
where $\lambda\in\EuScript{O}$.

Our last theorem puts together the results in this paper
to obtain a geometric parametrization of irreducible
unitary representations of $(N_0,\germ n)$ by coadjoint orbits. 
Recall that
$$
\germ n_0^{+}=\{\,\lambda\in\germ n_0^*\ |\ \mathrm B_\lambda
\textrm{ is nonnegative definite\,}\}.
$$
\begin{theorem}
\label{mthm}
For a nilpotent super Lie group $(N_0,\germ n)$, 
the process of associating a coadjoint 
orbit $\EuScript{O}\subseteq\germ n_0^*$
to an irreducible unitary representation 
$(\pi,\rho^\pi,\mathcal H)$
of $(N_0,\germ n)$
yields a 
bijection between equivalence classes of 
irreducible unitary representations 
(up to unitary equivalence and parity change)
and $N_0$-orbits in 
$\germ n_0^+$.
\end{theorem}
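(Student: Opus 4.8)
The plan is to assemble Theorem \ref{mthm} from the machinery developed earlier, verifying that the map $(\pi,\rho^\pi,\mathcal H)\mapsto\EuScript{O}$ is well-defined, surjective onto $N_0$-orbits in $\germ n_0^+$, and injective up to unitary equivalence and parity change.

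\emph{Well-definedness.} Given an irreducible unitary representation $(\pi,\rho^\pi,\mathcal H)$, part (a) of Theorem \ref{thmindpol} produces a polarizing system $(M_0,\germ m,\Phi,C_0,\germ c,\lambda)$ and a consistent representation $(\sigma_\mu,\rho^{\sigma_\mu},\mathcal K_\mu)$ with $(\pi,\rho^\pi,\mathcal H)\simeq\ind_{(M_0,\germ m)}^{(N_0,\germ n)}(\sigma_\mu\circ\Phi,\rho^{\sigma_\mu\circ\Phi},\mathcal K_\mu)$. By the argument in Section \ref{polarisingsystems}, consistency of $(\sigma_\mu,\rho^{\sigma_\mu},\mathcal K_\mu)$ forces $\mathrm B_\lambda$ to be nonnegative definite, so $\lambda\in\germ n_0^+$. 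Corollary \ref{mycorollary} shows the restriction $(\pi,\mathcal H)$ is a direct sum of copies of an irreducible unitary representation $(\sigma,\mathcal K)$ of $N_0$, and since special induction commutes with restriction to the even part, $(\sigma,\mathcal K)$ is $\ind_{M_0}^{N_0}\chi\circ\Phi$ for the central character $\chi$ of $(\sigma_\mu,\rho^{\sigma_\mu},\mathcal K_\mu)$; classical Kirillov theory \cite{corgr} attaches to it the coadjoint orbit $\EuScript{O}=N_0\cdot\lambda$. Since $\germ n_0^+$ is a union of coadjoint orbits, $\EuScript{O}\subseteq\germ n_0^+$. Part (b) of Theorem \ref{thmindpol} guarantees that $\EuScript{O}$ does not depend on the chosen polarizing system, and tensoring with a $(0|1)$-dimensional trivial representation does not change the even part, so $\EuScript{O}$ is unchanged under parity change; thus the map is well-defined on equivalence classes.

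\emph{Surjectivity.} Fix $\lambda\in\germ n_0^+$. By Lemma \ref{existenceofevenpart} there is a polarizing subalgebra $\germ m_0\supseteq[\germ n_1,\germ n_1]$, hence (taking $\germ m_1=\germ n_1$) a special sub super Lie group $(M_0,\germ m)$, and Proposition \ref{uniquepolar} produces a polarizing system $(M_0,\germ m,\Phi,C_0,\germ c,\lambda)$ with a consistent representation $(\sigma_\mu,\rho^{\sigma_\mu},\mathcal K_\mu)$. By Theorem \ref{lastmainthm} the induced representation $(\pi,\rho^\pi,\mathcal H)$ of (\ref{piinduced}) is irreducible, and by the well-definedness argument its associated orbit is exactly $N_0\cdot\lambda$. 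Hence every $N_0$-orbit in $\germ n_0^+$ is hit.

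\emph{Injectivity.} Suppose two irreducible unitary representations have the same associated orbit $\EuScript{O}$. By Theorem \ref{thmindpol}(a) each is induced from a consistent representation of a polarizing system whose linear functional lies in $\EuScript{O}$; conjugating one of the polarizing systems by a suitable $n\in N_0$ (as in Case III of the proof of Theorem \ref{lastmainthm}, where such conjugation yields a unitarily equivalent induced representation) we may assume both polarizing systems are associated to the \emph{same} $\lambda\in\EuScript{O}$. Then Proposition \ref{threefive} gives $(\pi,\rho^\pi,\mathcal H)\eqsim(\pi',\rho^{\pi'},\mathcal H')$, i.e. they agree up to unitary equivalence and parity change. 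This establishes the bijection.

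\emph{Main obstacle.} The only delicate point is the reduction, in the injectivity step, from ``same orbit'' to ``same $\lambda$'': one must check that the $N_0$-conjugate polarizing system is again a legitimate polarizing system with a consistent representation and yields a unitarily equivalent induced representation. This is precisely the conjugation argument already carried out in Case III of Theorem \ref{lastmainthm}, so no genuinely new work is needed — everything else is assembly of previously proved results.
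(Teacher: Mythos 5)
Your proof is correct and follows essentially the same route as the paper: well-definedness via Theorem \ref{thmindpol}, surjectivity via Lemma \ref{existenceofevenpart}, Proposition \ref{uniquepolar}, and Theorem \ref{lastmainthm}, and injectivity via the $N_0$-conjugation argument from Case III of Theorem \ref{lastmainthm} combined with Proposition \ref{threefive}. You actually spell out the injectivity reduction (``same orbit $\Rightarrow$ same $\lambda$ after conjugation'') more explicitly than the paper does, but it is the same argument.
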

\begin{proof}
By part (a) of Theorem \ref{thmindpol},
any irreducible unitary representation 
$(\pi,\rho^\pi,\mathcal H)$ of 
$(N_0,\germ n)$ is induced from a 
consistent representation of a polarizing
system 
$$
(M_0,\germ m,\Phi,C_0,\germ c,\lambda)
$$
and as shown in Section \ref{polarisingsystems},
it follows that $\lambda\in\germ n_0^+$. 

By part (b) of 
Theorem \ref{thmindpol},
if $(\pi,\rho^\pi,\mathcal H)$ 
is induced from a consistent representation of another 
polarizing system 
$$
(M'_0,\germ m',\Phi',C'_0,\germ c',\lambda')
$$
then 
$\lambda$ and $\lambda'$ are in the same $N_0$-orbit.
Moreover, once we fix a $\lambda\in \germ n_0^+$, 
by Proposition \ref{uniquepolar}
there always exists an associated polarizing system and a consistent representation,
and by Proposition \ref{threefive}, up to unitary equivalence and 
parity change all 
such polarizing systems yield the same irreducible unitary 
representation of
$(N_0,\germ n)$. 

\end{proof}

{\noindent\bf Remark.} One can actually prove that for every 
irreducible unitary representation
$(\pi,\rho^\pi,\mathcal H)$
of $(N_0,\germ n)$, the space 
$[\germ n_1,[\germ n_1,\germ n_1]]$ 
acts trivially, i.e., 
\begin{equation}
\label{nonenonenone}
\rho^\pi(X)=0\textrm{ for every } 
X\in[\germ n_1,[\germ n_1,\germ n_1]].
\end{equation}
Indeed if $\lambda\in\germ n_0^+$ then
from Lemma \ref{difficultlemma}, Lemma \ref{existenceofevenpart},
and Lemma \ref{thevecisideal} it follows that 
$[\germ n_1,[\germ n_1,\germ n_1]]\subseteq\germ r_\lambda$.
Consequently, when $(\pi,\rho^\pi,\mathcal H)$ is induced from a 
consistent representation of a polarizing system 
$(M_0,\germ m,\Phi,C_0,\germ c,\lambda)$, we have
$\Phi([\germ n_1,[\germ n_1,\germ n_1]])=0$. 
Statement (\ref{nonenonenone}) now follows 
from the realization of the induced representation
given in Section \ref{realization}
and the fact that 
$[\germ n_1,[\germ n_1,\germ n_1]]$
is $N_0$-invariant.

Another more direct 
way to prove (\ref{nonenonenone}) is to
use the method of proof of
Proposition \ref{threefive}.
Statement (\ref{nonenonenone}) can be used to
obtain slightly different proofs for the 
main results of this paper.
We thank the referee for suggesting this statement
and the second method of proof.

\end{document}